\numberwithin{equation}{section}
\newtheorem{theorem}{Theorem}[section]
\newtheorem{corollary}[theorem]{Corollary}
\newtheorem{lemma}[theorem]{Lemma}
\newtheorem{proposition}[theorem]{Proposition}
\theoremstyle{definition}
\newtheorem{remark}[theorem]{Remark}
\theoremstyle{definition}
\theoremstyle{definition}
\def\dashint{\operatorname%
{\,\,\text{\bf-}\kern-.98em\DOTSI\intop\ilimits@\!\!}}
\def\\det{\text{det}}
\def\ri{\mathrm{i}}
\def\.5{\frac{1}{2}}
\def\bR{\mathbb{R}}
\def\bZ{\mathbb{Z}}
\def\bN{\mathbb{N}}
\def\bC{\mathbb{C}}
\def\tG{\tilde{G}}
\def\fB{\mathfrak{B}}
\def\Re{\text{Re}\,}
\def\Im{\text{Im}\,}
\def\cD{\mathcal{D}}
\begin{document}

\title[On an elliptic equation arising from composite materials]
{On an elliptic equation arising from composite materials}

\author[H. Dong]{Hongjie Dong}
\address[H. Dong]{Division of Applied Mathematics, Brown University,
182 George Street, Providence, RI 02912, USA}
\email{Hongjie\_Dong@brown.edu}
\thanks{H. Dong was partially supported by the NSF under agreement DMS-1056737.}

\author[H. Zhang]{Hong Zhang}
\address[H. Zhang]{Division of Applied Mathematics, Brown University,
182 George Street, Providence, RI 02912, USA}
\email{Hong\_Zhang@brown.edu}
\thanks{H. Zhang was partially supported by the NSF under agreement DMS-1056737.}

\begin{abstract}
In this paper, we derive an interior Schauder estimate for the divergence form elliptic equation
\begin{equation*}
D_i(a(x)D_iu)=D_if_i
\end{equation*}
in $\mathbb{R}^2$, where $a(x)$ and  $f_i(x)$ are piecewise H\"older continuous in a domain containing two touching balls as subdomains. When $f_i\equiv 0$ and $a$ is piecewise constant, we prove that $u$ is piecewise smooth with bounded derivatives. This completely answers a question raised by Li and Vogelius \cite{LV00} in dimension 2.

\end{abstract}
\maketitle
\section{Introduction}
In this article, we consider second-order divergence type elliptic equations with discontinuous coefficients and data
\begin{equation}
L_{r_1,r_2}u:=D_i(a(x)D_i u)=D_if_i\quad \text{in}\,\, \mathcal{D},\label{eq 2.161}
\end{equation}
where $\mathcal{D}$ is a bounded subset of $\bR^2$,
%so that $\overline{B_{r_1}(0,r_1)\cup B_{r_2}(0,-r_2)}\subset \mathcal{D},$
$$
a(x)=a_0\chi_{B_{r_1}(0,r_1)\cup B_{r_2}(0,-r_2)}+\chi_{\bR^2 \setminus (B_{r_1}(0,r_1)\cup B_{r_2}(0,-r_2))},
$$
$a_0>0$ is a constant, $r_1, r_2\in (0,\infty)$, and $\chi$ is the indicator function. This problem was raised by  Bonnetier and Vogelius \cite{BonVog00}, and can be considered as a simplified model for composite media with closely spaced interfacial boundaries.  Here $\mathcal{D}$  models the cross-section of a fiber-reinforced composite and the balls $B_{r_1}(0,r_1)$ and $B_{r_2}(0,-r_2)$ represent the cross-sections of the fibers; the remaining subdomain represents the matrix surrounding the fibers. Moreover, $a(x)$ is the shear modulus, which is a constant on the fibers, and a different constant on the matrix surrounding the fibers. The function $u$ stands for the out of plane elastic displacement.

Elliptic equations and systems arising from elasticity have been studied by many authors. See, for instance, \cite{CKC86, BonVog00, LV00, Babu,PCV87,DongARMA12, XB13, MR3021549, MR3296149}. In \cite{CKC86}, Chipot, Kinderlehrer, and Vergara-Caffarelli considered  divergence type uniformly elliptic systems in a domain  $\mathcal{D}\subset\bR^d$ consisting of finite numbers of linearly elastic, homogeneous, parallel laminae, which models the equilibrium problem of  linear laminates. In \cite{LV00}, Li and Vogelius studied divergence type elliptic equations in a bounded domain $\mathcal{D}\subset \bR^d$, where $\mathcal{D}$ can be divided into finite numbers of subdomains with $C^{1,\alpha}$ boundaries. The coefficients of the equations and data are H\"older continuous in each subdomain up to the boundary, but may have jump discontinuities across the boundaries of the subdomains. Under these conditions, they proved a global $W^{1,\infty}$ estimate and a piecewise $C^{1,\beta}$ estimate, for any $\beta\le \frac{\alpha}{d(\alpha+1)}$. Notably, their estimate does not depend on the distance of discontinuous surfaces, which indicates that by an approximation argument, interfaces may touch each other, e.g., the geometry shown in Figure \ref{fig:1.1}.  Later, Li and Nirenberg \cite{LN03} extended the result in \cite{LV00} to elliptic systems under the same condition. They were able to improve the piecewise $C^{1,\beta}$ estimate in \cite{LV00} to any $\beta\in(0,\frac{\alpha}{2(1+\alpha)}]$.

Regarding the operator in \eqref{eq 2.161}, Bonnetier and Vogelius \cite{BonVog00} first considered the Dirichlet value boundary with $r_1=r_2=1$ and $f_i\equiv0$: $L_{1,1}u=0$ in $\mathcal{D}$ and $u=\phi$ on $\partial\mathcal{D}$. They showed a global regularity result that the solution $u \in W^{1,\infty}(\mathcal{D})$. Later, Li and Vogelius \cite{LV00} extended the result in \cite{BonVog00} and proved that when $r_1=r_2=1$,  $f_i\equiv 0$, and
$\mathcal{D}=B_{R_0}$ with $R_0$ sufficiently large, the weak solution $u$ is piecewise smooth, i.e.,
\begin{equation*}
u\in C^{\infty}(\overline{B_1(0,1)}),\,\, u\in C^{\infty}(\overline{B_1(0,-1)}),\,\, u\in C^{\infty}\big(K\setminus(B_1(0,1)\cup B_1(0,-1))\big),
\end{equation*}
where $K$ is any compact subset of $B_{R_0}$. Then they asked the following natural question: can we drop the condition that $R_0$ being sufficiently large?

%In this paper, our results are two-fold.
%First, $R_0$ being sufficiently large is important in the proof of Li and Vogelius \cite{LV00}. Since smoothness is a local property, a natural question to ask is if $R_0$ begin large is a necessary condition?
 In our first result, we answer this question by proving that $R_0>2$ is sufficient to guarantee that $u$ is piecewise smooth in the interior of $B_{R_0}$.
 \begin{figure}
\begin{center}
\includegraphics[width=2in]{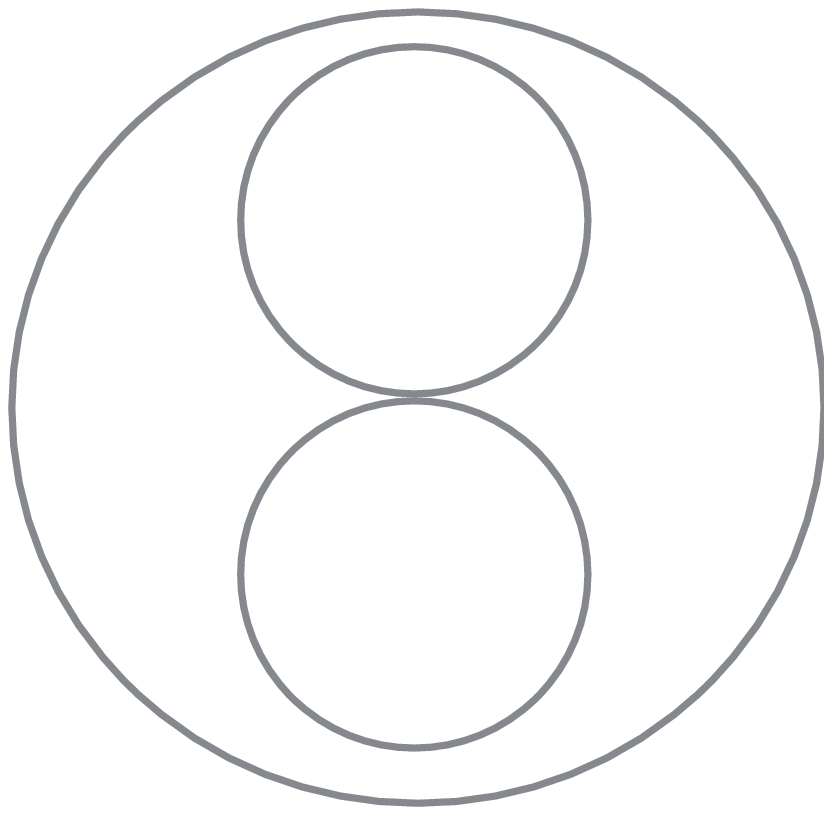}
\caption{}
\label{fig:1.1}
\end{center}
\end{figure}
\begin{theorem}\label{thm 2.161}
Let $R_0>2$ and $g\in H^{1/2}(\partial B_{R_0})$. Suppose $u$ is a weak solution of
\begin{equation*}
D_i(a(x)D_i u)=0\quad \text{in} \quad B_{R_0},\quad u=g\quad \text{on}\quad \partial B_{R_0},
\end{equation*}
where
\begin{align}
a(x)=a_0\chi_{B_1(0,1)\cup B_1(0,-1)}+\chi_{B_{R_0}\setminus(B_1(0,1)\cup B_1(0,-1))}.
\label{eq 2.171}
\end{align}
Then
\begin{equation*}
u\in C^{\infty}(\overline{B_1(0,1)}),\quad
u\in C^\infty(\overline{B_1(0,-1)}),\quad u\in C^{\infty}\big(K\setminus (B_1(0,1)\cup B_1(0,-1))\big)
\end{equation*}
for any compact set $K\subset B_{R_0}$.
\end{theorem}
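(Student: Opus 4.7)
The plan is to combine the interior Schauder estimate proved earlier in this paper with classical interior transmission regularity away from the touching point and an iteration argument at the touching point.

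\emph{Away from the origin.} Fix a compact $K \subset B_{R_0}$ and some $\delta > 0$. On $K \setminus B_\delta(0)$, the two interfaces $\partial B_1(0, \pm 1) \setminus \{0\}$ are mutually separated real-analytic arcs. Near each point of such an arc, locally flatten the interface via an analytic change of variables; the transformed problem is a standard two-phase transmission problem with constant coefficients on two adjacent half-balls separated by a flat interface. By a classical bootstrap (tangential differentiation preserves the transmission conditions $[u]=0$ and $[a\partial_n u]=0$, and the normal second derivative is recovered from the equation $\Delta u=0$ in each phase), $u$ is piecewise $C^\infty$ up to each interface. Hence $u$ is piecewise $C^\infty$ on $K \setminus B_\delta(0)$.

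\emph{At the origin.} Apply the main interior Schauder estimate of this paper in a small ball $B_r(0)$. Since $a$ is piecewise constant and $f_i \equiv 0$, the hypotheses are trivially satisfied and we obtain $u \in C^{1,\alpha}$ in each of the four local subdomains meeting the origin: the two lens regions $B_r \cap B_1(0, \pm 1)$ and the two cusp components of $B_r \setminus (\overline{B_1(0,1)} \cup \overline{B_1(0,-1)})$. To improve this to piecewise $C^\infty$, I would iterate the Schauder estimate. Once $u$ is piecewise $C^{k,\alpha}$, write the equation satisfied by a tangential derivative (or a suitably chosen difference quotient) of $u$ near origin, verify it is again of the form \eqref{eq 2.161} with right-hand side $D_i f_i$ whose $f_i$ is piecewise $C^{k-1,\alpha}$ -- the inhomogeneity arising from the (analytic) curvature of the interfaces and the already-available piecewise regularity of $u$ -- and reapply the main estimate. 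Iterating yields piecewise $C^{k,\alpha}$ for every $k$. A cutoff and a covering of $K$ then combine the two regimes.

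\emph{Main obstacle.} The subtlety is justifying the iteration at the origin, where the two interfaces meet tangentially and one cannot simultaneously flatten them. Differentiating across a curved interface produces curvature-induced terms supported on the interface, so the equation satisfied by $\partial_k u$ is not literally of the form \eqref{eq 2.161}. Since both interfaces share the tangent direction $e_1$ at origin, differentiation in this direction is the natural candidate to minimize the interfacial contribution, and the resulting terms should be expressible as $D_i f_i$ with piecewise H\"older $f_i$ so that the main Schauder estimate applies. Making this absorption precise -- perhaps via a carefully chosen cutoff, or alternatively via a higher-order Schauder estimate established by the same method as the main theorem -- is the crux of the argument.
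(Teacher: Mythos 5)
Your argument away from the origin is fine, but the step you yourself flag as the crux is a genuine gap, and as described it cannot be repaired. If you differentiate $D_i(aD_iu)=0$ tangentially, the new equation is $D_i(aD_iD_1u)=-D_i\big((D_1a)D_iu\big)$; since $a$ is piecewise constant, $D_1a$ is a surface measure on the two circles, so the right-hand side is \emph{not} of the form $D_if_i$ with piecewise H\"older $f_i$, and the interior estimate cannot be reapplied. The standard cure (flatten the interface and differentiate in the flattened tangential direction, so that only the coefficient's tangential derivative appears) is exactly what is unavailable at the origin, where the two interfaces touch tangentially and admit no common flattening; choosing the shared tangent direction $e_1$ does not make the curvature-induced interfacial terms vanish, it only makes them formally ``small''. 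So the iteration scheme at the origin is not justified, and no argument is offered to close it. Note also that the iteration is unnecessary if you are willing to quote Theorem \ref{thm 3.133} at full order: that theorem holds for every $n\ge 0$, so with $f_i\equiv 0$ it directly yields piecewise $C^{n+1,\gamma}$ for all $n$, i.e.\ piecewise smoothness in $\mathcal{D}_\varepsilon$, with no bootstrap. That route is not circular, since the proof of Theorem \ref{thm 3.133} (via Green's functions and Theorem \ref{thm 3.151}) only uses the denseness result for $R_0$ sufficiently large and never uses Theorem \ref{thm 2.161}; but you would need to say this, and it makes your ``main obstacle'' paragraph moot rather than resolving it.

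For comparison, the paper proves Theorem \ref{thm 2.161} by an entirely different, self-contained mechanism: it shows that for $R_0>2$ the restrictions $u_j|_{\{|x|=R_0\}}$ form a Schauder basis of $H^s_{\text{sym}}(\{|x|=R_0\})$ (Lemma \ref{lem2.3} on column diagonal dominance, Lemmas \ref{lem 2.5} and \ref{lem 2.6}, Proposition \ref{prop 1.291}), expands the boundary datum as $g=\sum_j g_ju_j$, and passes to the limit in the partial sums $U_k$ using the uniform piecewise derivative bounds $|D^mu_j|\le C_mR_0^{-j}(j+|m|)^{|m|}$ of Proposition \ref{prop 1.152}; no local bootstrap at the cusp is ever performed, which is precisely how the difficulty you ran into is avoided, and which is what produces the sharp threshold $R_0>2$ rather than ``$R_0$ sufficiently large''.
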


To prove Theorem \ref{thm 2.161}, we borrow some ideas from \cite{LV00}. In \cite{LV00}, Li and Vogelius constructed a sequence of piecewise smooth solutions $\{u_j\}$ to
\begin{equation*}
D_i(a(x)D_iu)=0\quad \text{in}\,\, \bR^2,
\end{equation*}
the linear combinations of which are dense in $H_{\text{sym}}^s(\partial B_{R_0})$ for $R_0$ sufficiently large, where $H_{\text{sym}}^s(\partial B_{R_0})$ denotes the space of functions even in $x_1$ with finite $H^s$ norm for $s\ge 0$. The precise definition can be found in Section 2. Therefore, the solution $u$ to the Dirichlet problem with the boundary condition $u=\phi\in H_{\text{sym}}^s(\partial B_{R_0})$ can be approximated by linear combinations of $u_j$'s.  Hence, by a classical elliptic regularity argument,  one can show that $|D^ku|<\infty$  in each subdomain for any $k\ge 0$.

In this paper, we carry out a more careful analysis on $\{u_j\}$ to show that  $R_0>2$ is sufficient to guarantee that $\{u_j\}$  forms a Schauder basis for $H_{\text{sym}}^s(\partial B_{R_0})$. Precisely, it is obvious that
$$
\{e_j,j\ge 0\}:=\big\{(-1)^j\cos(2j\theta), (-1)^j\sin((2j+1)\theta),j\ge 0\big\}
$$
is an orthogonal basis of $H^s_{\text{sym}}(\partial B_{R_0})$.
Each $u_j$ can be written as a linear combination of $e_j$'s, i.e., $u_j=\sum_{k=0}^\infty M_{j,k}e_k$. We show that the infinite dimensional matrix $M:=(M_{k,j})_{k,j=0}^\infty$ define a bounded and invertible operator on a Hilbert space $\mathfrak{l}^s$. For the definition of $\mathfrak{l}^s$, see Section 2. An important observation in our proof is that the submatrix $\{M_{k,j}\}_{k,j=1}^\infty$ is diagonally dominant by column. From this, we deduce that the map induced by $M$ is invertible, which further implies that $\{u_j\}_{j\ge 0}$ forms a Schauder basis of $H^s_{\text{sym}}(\{|x|=R_0\})$.  The remaining proof then follows the lines in \cite{LV00}.

Another natural question to ask is if the geometry of the domain where the equation is satisfied affects the smoothness of the solution around the origin? In other words, if $Lu=0$ in $\mathcal{D}$, is it necessary that $\mathcal{D}$ contains a ball with radius $R_0>2$ for $u$ to be  piecewise smooth around the origin? Or $\mathcal{D}$ can be any neighborhood of the origin?
Our second result answers this question by proving an interior Schauder estimate for the non-homogeneous equation \eqref{eq 2.161} in a general domain. Furthermore, we break the symmetry of the coefficients, meaning that $a(x)$  can be two different positive constants  $a_0$ and $b_0$ in the two balls with different radii.
\begin{figure}
\begin{center}
\includegraphics[width=2in]{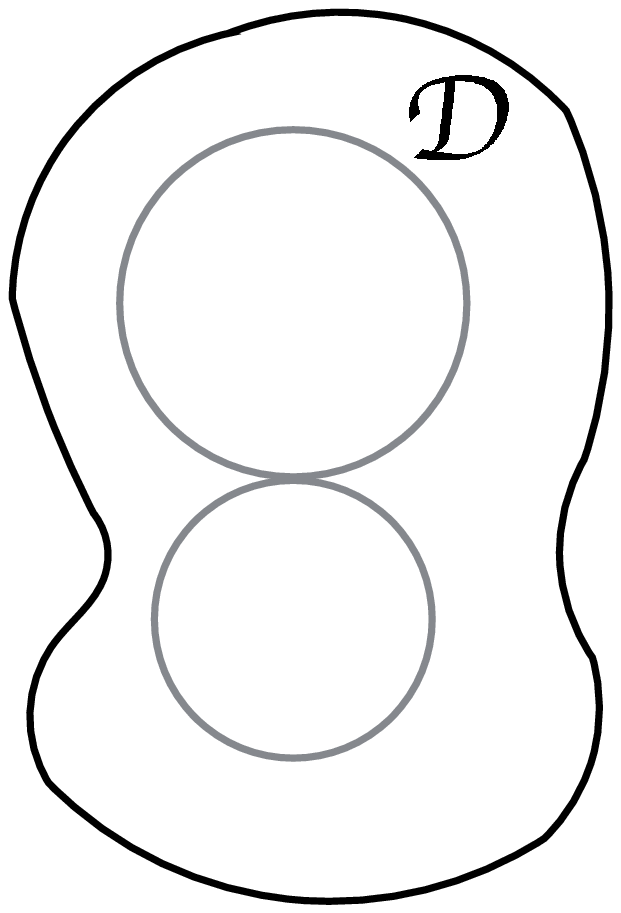}
\caption{}
\end{center}
\end{figure}
\begin{comment}
{\color{blue}\begin{theorem}\label{thm 2.162}
Let $r_1,r_2\in (0,\infty)$, $\beta\in (0,1)$, and $k\ge 0$. Assume $\mathcal{D}\subset \bR^2$ is a bounded open set. Suppose that for any $1\le i\le2, 1\le j\le 3$, $\mathscr{F}_i^{j}$ is $C^{n,\beta}(\bR^2)$, and for any $i$,  $$f_i=\mathscr{F}_i^1 \quad\text{in}\quad B_{r_1}(0,r_1) ,\quad f_i= \mathscr{F}_i^2\quad \text{in}\quad B_{r_2}(0,-r_2),$$ and $$f_{i}=\mathscr{F}_i^3\quad \text{in}\quad \bR^2\setminus(B_{r_1}(0,r_1)\cup B_{r_2}(0,-r_2)).$$
Let $u$ be a solution to
\begin{equation}\label{eq 3.134}
D_i(a(x)D_i u(x))=D_if_i\quad \text{in}\quad \mathcal{D},
\end{equation}
where
$$a(x)=a_0\quad \text{in}\quad B_{r_1}(0,r_1)\cup B_{r_2}(0,-r_2),$$
$$a(x)=1\quad \text{in}\quad \mathcal{D}\setminus(B_{r_1}(0,r_1)\cup B_{r_2}(0,-r_2)).$$
Define $\mathcal{D}_{\varepsilon}=\{x\in \mathcal{D},\text{dist}(x,\partial\mathcal{D})\ge \varepsilon\}$ for any $\varepsilon>0$.
Then $$u\in C^{k+1,\beta}(\overline{\mathcal{D}_{\varepsilon}\cap B_{r_1}(0,r_1)}), u\in C^{k+1,\beta}(\overline{\mathcal{D}_{\varepsilon}\cap B_{r_2}(0,-r_2)}),$$ and $$u\in C^{k+1,\beta} (\overline{\mathcal{D}_{\varepsilon}\setminus(B_{r_1}(0,r_1)\cup B_{r_2}(0,-r_2))}).$$

In particular, when $f_i\equiv 0$, $u$ is piecewise smooth in $\mathcal{D}_{\varepsilon}.$
\end{theorem}}
\end{comment}
\begin{theorem}\label{thm 3.133}
Let $r_1,r_2\in (0,\infty)$, $\gamma\in (0,1)$, and $n\ge 0$ be an integer. Assume that $\mathcal{D}\subset \bR^2$ is a bounded open set. Suppose that for any $i$, $f_i$ is piecewise $C^{n,\gamma}$, i.e.,
$$
f_i\in C^{n,\gamma}(\cD\cap {B_{r_1}(0,r_1)}), \quad  f_i\in C^{n,\gamma}(\cD\cap {B_{r_2}(0,-r_2)}),
$$
and
$$
f_{i}\in C^{n,\gamma}({\cD\setminus(B_{r_1}(0,r_1)\cup B_{r_2}(0,-r_2))}).
$$
Let $u$ be a weak solution to
\begin{equation*}
D_i(a(x)D_i u(x))=D_if_i\quad \text{in}\,\, \mathcal{D},
\end{equation*}
where
$$
a(x)=a_0\chi_{B_{r_1}(0,r_1)}+b_0\chi_{B_{r_2}(0,-r_2)}+\chi_{\bR^2\setminus (B_{r_1}(0,r_1)\cup B_{r_2}(0,-r_2) )}
$$
%a(x)=1\quad \text{in}\quad \bR^2\setminus(B_{r_1}(0,r_1)\cup B_{r_2}(0,-r_2)).$$
Define $\mathcal{D}_{\varepsilon}=\{x\in \mathcal{D},\text{dist}(x,\partial\mathcal{D})\ge \varepsilon\}$ for any $\varepsilon>0$.
Then
$$
u\in C^{n+1,\gamma}(\mathcal{D}_{\varepsilon}\cap B_{r_1}(0,r_1)),\quad u\in C^{n+1,\gamma}(\mathcal{D}_{\varepsilon}\cap B_{r_2}(0,-r_2)),
$$
and
$$
u\in C^{n+1,\gamma} (\mathcal{D}_{\varepsilon}\setminus(B_{r_1}(0,r_1)\cup B_{r_2}(0,-r_2))).
$$
In particular, when $f_i\equiv 0$, $u$ is piecewise smooth in $\mathcal{D}_{\varepsilon}$ up to the boundary.
\end{theorem}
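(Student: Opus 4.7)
The plan is to split the argument into two regimes: away from the tangent point at the origin, where standard piecewise Schauder theory applies directly, and near the tangent point, where the only genuine difficulty lies. In the region $\cD_\varepsilon \setminus B_\rho(0)$ (for any small $\rho > 0$), the two circular interfaces $\partial B_{r_1}(0,r_1)$ and $\partial B_{r_2}(0,-r_2)$ are smooth, disjoint, and well-separated, so the piecewise $C^{n+1,\gamma}$ Schauder estimates for divergence-form transmission problems with smooth interfaces (as in \cite{LV00, LN03}) immediately yield the conclusion there. Consequently the entire remaining problem is to obtain a piecewise $C^{n+1,\gamma}$ estimate for $u$ inside a small ball $B_\rho(0)$ centered at the tangent point.

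The key geometric reduction is the Kelvin inversion $\Phi\colon y\mapsto x=y/|y|^2$. A direct computation shows that $\Phi$ maps the half-planes $\{y_2>1/(2r_1)\}$ and $\{y_2<-1/(2r_2)\}$ bijectively onto the disks $B_{r_1}(0,r_1)$ and $B_{r_2}(0,-r_2)$ respectively, so the two tangent circles become parallel horizontal lines, and the punctured neighborhood $B_\rho(0)\setminus\{0\}$ pulls back to the exterior domain $\Omega:=\{|y|>1/\rho\}$. Because $\Phi$ is conformal in two dimensions with $|D\Phi|=|y|^{-2}$ and the Dirichlet energy is conformally invariant in $\bR^2$, the pullback $\tilde u(y):=u(\Phi(y))$ satisfies
\begin{equation*}
D_i\bigl(\tilde a(y)\,D_i\tilde u\bigr)=D_i\tilde f_i\quad\text{in }\Omega,
\end{equation*}
where $\tilde a$ takes the three values $a_0,\,b_0,\,1$ piecewise-constantly on the three horizontal strips separated by the two straightened interfaces, and $\tilde f_i$ is (up to a rotation induced by $D\Phi$) the quantity $|y|^{-2}f_i(\Phi(y))$, still piecewise $C^{n,\gamma}$ on each strip and decaying like $|y|^{-2}$ at infinity.

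Because $\tilde a$ depends only on $y_2$, the transformed equation is $y_1$-translation invariant, so $\partial_{y_1}$-differentiation preserves it. Combined with the transmission identities (continuity of $\tilde u$ and of $\tilde a\,D_{y_2}\tilde u+\tilde f_2$) across each straight interface, this is essentially the laminated-media setup of \cite{CKC86}, and one obtains a piecewise $C^{n+1,\gamma}$ Schauder estimate for $\tilde u$ on each layer in bounded subregions of $\Omega$. Propagating this estimate through dyadic annuli $\{2^k<|y|<2^{k+2}\}$ using the $|y|^{-2}$ decay of $\tilde f$ together with a Caccioppoli inequality gives a uniform bound all the way up to $|y|=\infty$. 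Pulling back by the smooth (away from the origin) Kelvin map then yields the desired piecewise $C^{n+1,\gamma}$ regularity of $u$ on each of the three subdomains inside $B_\rho(0)$; the final $f_i\equiv 0$ statement follows by iterating $n\to\infty$.

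The main technical obstacle is precisely the uniform estimate for $\tilde u$ at $|y|=\infty$, which corresponds exactly to the piecewise behavior of $u$ at the tangent point. What makes it tractable is the two-dimensional miracle that Kelvin inversion maps two internally tangent circles \emph{exactly} onto parallel lines: the resulting $y_1$-translation invariance reduces the regularity question to a one-dimensional flat-interface problem on a layered medium, for which classical tools apply. Without this reduction, one would have to analyze the degenerate contact geometry at the origin head-on, which is where all the essential difficulty of the problem resides.
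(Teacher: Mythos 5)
Your reduction away from the tangent point and your use of an inversion to straighten the two tangent circles are both sound, and in fact the second step is the same geometric idea the paper uses (the map $z\mapsto \ri/z$ sending the circles to the lines $\{\Re z=\pm\tfrac12\}$; your $y\mapsto y/|y|^2$ even handles $r_1\neq r_2$ in one stroke, where the paper first applies an auxiliary M\"obius map). The gap is in how you come back. After the inversion, piecewise $C^{n+1,\gamma}$ regularity of $u$ \emph{up to the tangent point} is not equivalent to a uniform piecewise Schauder bound for $\tilde u$ on unit balls near $|y|=\infty$: the Kelvin map degenerates at the origin, and each $x$-derivative costs a factor $|Dy/Dx|\sim|x|^{-2}$. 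From a uniform bound $\sup|D\tilde u|\le C$ you only get $|Du(x)|\lesssim |x|^{-2}$, and more generally $|D^m u|\lesssim |x|^{-2m}$, which says nothing about boundedness or H\"older continuity of $D^{n+1}u$ at the cusp. What is actually needed is a precise asymptotic expansion of $\tilde u$ at spatial infinity for the three-layer laminate problem (the terms of which correspond, under the inversion, to the Taylor polynomial of $u$ at the cusp, including the nontrivial matching of derivative limits from the two sides of the cusp), together with quantitative decay of the remainder. Your "Caccioppoli plus dyadic annuli" step produces uniform bounds, not this expansion, so the crux of the theorem is in effect assumed; note also that $\tilde f\sim|y|^{-2}$ is borderline non-integrable in two dimensions, so even convergence of $\tilde u$ to a limit at infinity is not free and must exploit the divergence structure.

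For comparison, the paper supplies exactly this missing ingredient by explicit construction: for the homogeneous problem it builds the family of special solutions $\Psi_j$ (image-type holomorphic series) and shows their traces form a Schauder basis, which yields uniform bounds on \emph{all} derivatives of the partial sums in the three regions and hence piecewise smoothness at the cusp (Theorem \ref{thm 3.151}); for the inhomogeneous problem it constructs Green's function of $L$ as a geometric series of logarithms composed with the conformal maps $X_k$, localizes with a cutoff, and reduces each term to a Poisson problem with piecewise $C^{n,\gamma}$ data (using Lemma \ref{lemma 3.231} for the exterior region), summing the series thanks to the weights $(\alpha\beta)^k$ against the polynomial growth of $\|D^jX_k\|$. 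If you want to salvage your route, you would need a Liouville-type classification of polynomially growing solutions of the laminate operator in the plane and a rate of convergence of $\tilde u$ to the corresponding expansion at infinity in the presence of the $|y|^{-2}$ forcing; that is essentially equivalent to what the paper proves via its basis and Green's function, not a consequence of classical interior Schauder theory as in \cite{CKC86}. (A minor further point: away from the origin, \cite{LV00,LN03} as cited give piecewise $C^{1,\beta}$ for restricted $\beta$ only; the full $C^{n+1,\gamma}$ there requires the flattening-and-tangential-differentiation induction the paper carries out in Case 1 of Theorem \ref{thm 2.191}, starting from \cite{DongARMA12}.)
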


For the proof, first we find a conformal mapping which maps two balls with different radii to two balls with the same radius, so it is sufficient to consider $r_1=r_2$ and we denote the elliptic operator in \eqref{eq 2.161} with $r_1=r_2=1$ by $L$. Then the conformal mapping $\Gamma: z\rightarrow \ri/z$ maps $\{|z-\ri y|=1\}$ and  $\{|z+\ri y|=1\}$ to $\{\Re z=\.5\}$ and $\{\Re z=-\.5\}$, respectively, where $\ri=\sqrt{-1}$ is the imaginary unit. We are able to construct Green's function $\tilde{G}(x,y)$ of the elliptic operator $\tilde Lu=D_i(A(x)D_i u)$, where
$$
A(x)=a_0\chi_{\{x_1>\.5\}}+b_0\chi_{\{x_1<-\.5\}}+\chi_{\{|x_1|<\.5\}}.
$$
With the help of $\Gamma$ and $\tilde{G}(x,y)$, we obtain Green's function $G (x,y)$  of the elliptic operator $L$ in $\bR^2$, which can be written as an infinite series of logarithmic function composed with smooth functions, for example,  when $x\in \bR^2\setminus (B_1(0,1)\cup B_1(0,-1))$, and $y\in B_1(0,1)$,
$$
G (x,y)=c_1\sum_{k=0}^\infty (\alpha\beta)^{k}\log|X_{-2k}(x)-y|-c_2\sum_{k=1}^\infty (\alpha\beta)^{k-1}\log|X_{2k-1}(x)-\overline{y}|,
$$
where $c_1, c_2, \alpha,$ and $\beta$ are constants with $|\alpha|,|\beta|<1$, $\overline{y}=(y_1,-y_2)$, and $\{X_k\}$ are conformal maps and $X_0(x)=x$. Note that $\log|x-y|$ is Green's function of the Laplacian up to  a factor. This observation allows us to implement some known results of the Laplace equation with piecewise H\"older continuous data on the right-hand side.  More precisely, the original problem is decomposed to understand the regularity of solutions to the following equations
\begin{align*}
&\Delta u=D_i(f_i\chi_{B_1(0,1)}),\\
&\Delta u=D_i(f_i\chi_{B_1(0,-1)}),\\
&\Delta u=D_i(f_i\chi_{\bR^2\setminus(B_1(0,1)\cup B_1(0,-1))}),
\end{align*}
where in each subdomain $f_i \in C^{n,\gamma}$. By locally flattening the boundary, the first two equations can be further reduced to the case that $f_i\in C^{n,\gamma}$ in two half spaces, i.e., $\{x_2>0\}$ and $\{x_2<0\}$. The detail can be found in the proof of Theorem \ref{thm 2.191} Case 1. The last equation needs an extension result to be reduced to the previous case. See Lemma \ref{lemma 3.231}.
Combining with the smoothness of each $X_k$, we are able to estimate all the derivatives of the solution.

%Then we further extend the result to a more general setting, that is our second case focuses that  $a(x)$ equals to two different constants in the two balls.
%Basically we use the same method in proving Theorem \ref{thm 2.162}. Namely, we construct Green's function to the operator in this theorem, which has the similar structure as the one to the operator in \eqref{eq 3.134}. Hence, with a little modification we adapt the proof of Theorem \ref{thm 2.162} to show the following theorem.

By a standard perturbation argument, we have the following corollary.
\begin{corollary}\label{cor 3.71}
Let $0<\lambda\le \Lambda<\infty$ and $n\ge 0$ be an integer.
Assume $a(x)$ is piecewise $C^{n,\gamma}$, i.e., $$a(x)\in C^{n,\gamma}(B_{r_1}(0,r_1)),\quad a(x)\in C^{n,\gamma}(B_{r_2}(0,-r_2)),$$
and
$$
a(x)\in C^{n,\gamma}(\bR^2\setminus(B_{r_1}(0,r_1)\cup B_{r_2}(0,-r_2))),
$$
and satisfies the ellipticity condition $\lambda\le a\le \Lambda$.
Suppose that for each $i$, $f_i$ is piecewise $C^{n,\gamma}$, i.e.,
$$
f_i\in C^{n,\gamma}(\cD\cap B_{r_1}(0,r_1)),\quad f_i\in C^{n,\gamma}(\cD\cap B_{r_2}(0,-r_2)),
$$
and
$$
f_i\in C^{n,\gamma}(\cD\setminus(B_{r_1}(0,r_1)\cup B_{r_2}(0,-r_2))).
$$
Let $u$ be a weak solution to
\begin{equation*}
D_i(a(x)D_i u)=D_if_i
\end{equation*}
in $\mathcal{D}\subset \bR^2$. Denote $\mathcal{D}_\varepsilon=\{x\in \mathcal{D},\text{dist}(x,\partial\mathcal{D})\ge \varepsilon\}$ for $\varepsilon>0$.
Then $u$ is piecewise $C^{n+1,\gamma}$ in $\mathcal{D}_\varepsilon$ up to the boundary.
\end{corollary}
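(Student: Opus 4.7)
The plan is to derive Corollary \ref{cor 3.71} from Theorem \ref{thm 3.133} via a standard freezing-of-coefficients argument combined with induction on $n$. Denote $\Omega_1 = B_{r_1}(0, r_1)$, $\Omega_2 = B_{r_2}(0, -r_2)$, and $\Omega_3 = \bR^2 \setminus (\Omega_1 \cup \Omega_2)$. For a point $x_0 \in \cD_\varepsilon$ and a small radius $r > 0$, form the piecewise constant approximation $\bar{a}(x) = \sum_{k=1}^{3} a(x_k)\,\chi_{\Omega_k}(x)$, where $x_k \in \overline{\Omega}_k$ is a reference point near $x_0$; ellipticity of $\bar a$ is inherited from that of $a$. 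The equation rewrites as
\begin{equation*}
D_i(\bar{a}(x) D_i u) = D_i \tilde{f}_i, \qquad \tilde{f}_i := f_i + (\bar{a}(x) - a(x)) D_i u,
\end{equation*}
which is precisely the framework of Theorem \ref{thm 3.133}.

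First I would treat the base case $n = 0$, which is the main technical point. Starting from the piecewise $C^{1,\beta_0}$ regularity of $u$ available for some $\beta_0 \in (0,1)$ via the results of \cite{LV00} together with the $W^{1,\infty}$ bound of \cite{BonVog00} at the touching point, the perturbation term $(\bar{a} - a) D_i u$ is piecewise $C^{\min(\gamma,\beta_0)}$. Localizing to a small ball $B_r(x_0)$ and using the smallness $\|\bar{a} - a\|_{C^{\min(\gamma,\beta_0)}(B_r(x_0)\cap \Omega_k)} = O(r^{\gamma - \min(\gamma, \beta_0)})$ together with a rescaled version of the Schauder estimate from Theorem \ref{thm 3.133} (noting that rescaling two tangent balls produces two tangent balls, still covered by the theorem for all $r_1, r_2 \in (0,\infty)$), I would absorb the perturbation into the left-hand side for $r$ small enough and obtain a local piecewise $C^{1,\gamma}$ estimate. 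A covering of $\cD_\varepsilon$ by such balls then yields full piecewise $C^{1,\gamma}$ regularity at $n = 0$.

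For the inductive step $n \mapsto n+1$, suppose $u$ is already piecewise $C^{n,\gamma}$. Since $a$ is piecewise $C^{n,\gamma}$, the product $(\bar{a} - a) D_i u$ is piecewise $C^{n,\gamma}$ by the Hölder algebra property, hence $\tilde{f}_i$ is piecewise $C^{n,\gamma}$. Theorem \ref{thm 3.133} applied directly to the frozen equation then yields piecewise $C^{n+1,\gamma}$ regularity of $u$, and iteration completes the proof.

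The main obstacle lies in the base case near the origin, where the two subdomain boundaries touch tangentially. Away from the origin the tangent balls are separated by a positive distance and one could appeal to classical two-phase Schauder theory across a single smooth interface; at the origin, however, the tangential contact is exactly the scenario that Theorem \ref{thm 3.133} was designed to handle. The delicate point is ensuring that the constants in the localized, rescaled Schauder estimate remain controlled as $r \to 0$, which requires carefully tracking the dependence of the constants in Theorem \ref{thm 3.133} on $r_1$ and $r_2$.
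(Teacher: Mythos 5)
Your overall strategy---freezing $a$ to a piecewise constant $\bar a$, moving $(\bar a-a)D_iu$ to the right-hand side, invoking Theorem \ref{thm 3.133}, and absorbing---is the same as the paper's, but two steps as written do not close. The inductive step is circular: if you only know that $u$ is piecewise $C^{n,\gamma}$, then $D_iu$ is piecewise $C^{n-1,\gamma}$, so $(\bar a-a)D_iu$ is piecewise $C^{n-1,\gamma}$, not $C^{n,\gamma}$; your appeal to the H\"older algebra property presupposes $D_iu\in C^{n,\gamma}$, i.e.\ exactly the conclusion, and feeding $C^{n-1,\gamma}$ data into Theorem \ref{thm 3.133} merely returns $u\in C^{n,\gamma}$, so the induction gains nothing. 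The paper avoids this by never requiring the perturbation term to be bona fide $C^{n,\gamma}$ data of controlled size: freezing the coefficients only at the origin, so that $\|a-\tilde a\|_{L_\infty(B_l)}\le Cl^\gamma$, and using the Leibniz rule together with interpolation, one gets $[(a-\tilde{a})Du]_{n,\gamma;\hat{\Omega}_k}\le C(\varepsilon)\|u\|_{L_\infty(\hat{\Omega}_k)}+(l^\gamma+\varepsilon)[u]_{n+1,\gamma;\hat{\Omega}_k}$, so the only occurrence of the unknown top seminorm carries a small factor and is absorbed after applying the estimate of Theorem \ref{thm 3.133}; no induction on $n$ is needed for this step.

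The base case has the same mismatch in milder form, plus the scaling issue you yourself flag but do not resolve. If $\beta_0<\gamma$, the data $(\bar a-a)D_iu$ is only $C^{\beta_0}$, so your absorption can at best produce a piecewise $C^{1,\beta_0}$ bound, not $C^{1,\gamma}$; if $\beta_0\ge\gamma$, your smallness factor $O(r^{\gamma-\min(\gamma,\beta_0)})$ is $O(1)$ and there is no smallness in the norm you are using. The correct source of smallness is again the sup norm of $\bar a-a$, via the splitting $[(\bar a-a)Du]_{\gamma}\le\|\bar a-a\|_{L_\infty}[Du]_{\gamma}+[a]_{\gamma}\|Du\|_{L_\infty}$ and interpolation, exactly as above (this also removes any need for the a priori $C^{1,\beta_0}$ regularity from \cite{LV00,BonVog00}). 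Finally, rescaling $B_r(x_0)$ to unit scale sends the radii to $r_1/r,r_2/r\to\infty$, and Theorem \ref{thm 3.133} is qualitative with no stated control of constants in $r_1,r_2$, so the ``delicate point'' you mention is a genuine obstruction to your route; the paper sidesteps it by working at a fixed scale: one fixed ball $B_l$ about the origin where the coefficients are frozen, interface points away from the origin handled by flattening the boundary as in Case 1 of the proof of Theorem \ref{thm 2.191}, interior balls by the classical Schauder estimate, and a partition of unity to conclude.
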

\begin{comment}
\begin{remark}
In Theorem \ref{thm 3.133}, when $k=0$ we do not need to assume $f_i$ to be the restriction of some function in $C^\gamma(\bR^2)$ in each subdomain. Because for $f\in C^{\gamma}(\bR^2\setminus(B_1(0,1)\cup B_1(0,-1)))$, we can extend $f$ to be $C^{\gamma}$ in $\bR^2$. In fact, we consider the extension to $B_1(0,1)$ as an example.  We define the $F$ as follows. For $x$ in $B_{1}(0,1)\setminus B_{1/2}(0,1)$, $F(x)=f(y)$, where $y\in \partial B_1(0,1)$ such that $x$ is on the line segment connecting $y$ and $(0,1)$. On $B_{3/4}(0,1)$, $F(x)=0$. Then we mollify $F$ such that $F=f$ on the boundary of $B_1(0,1)$. Combining these two parts, we extend $f$ in $B_1(0,1).$ It is obvious the extension is $C^\gamma$.
 However, we do not know for $k\ge 1$ if a $C^{k,\gamma}$ function defined in $\bR^2\setminus(B_1(0,1)\cup B_1(0,-1))$ can be extend to $\bR^2$ with the same regularity. This is also the reason that we only consider $k=0$ in Corollary \ref{cor 3.71}.
\end{remark}
\end{comment}

%To our best knowledge, this is the first result for the operator $L$ with nonzero right-hand side.

This paper is organized as follows. In the next section, we introduce some notation and preliminary results, which are needed in the proof of our main theorems. In Section 3, we prove Theorem \ref{thm 2.161}. In Section 4, we make necessary preparations and prove Theorem \ref{thm 3.133} and Corollary \ref{cor 3.71}.

\section{Notation and preliminary results}
In this section, we first introduce some notation used throughout this paper. The Einstein summation convention is applied in this paper.
We use $B_{R}(x)$ to denote the Euclidean ball in $\bR^2$ with radius $R$ and center $x$. For simplicity, $B_1(0,1)$ and $B_1(0,-1)$ are denoted by $\fB_1$ and $\fB_2$, respectively, and $\bR^2\setminus\overline{\fB_1\cup \fB_2}=\fB_0$.  When there is no confusion, we use $B_{R_0}$ to denote the ball with radius $R_0$ and center $(0,0)$.  We use $L$ to denote the operator $L_{r_1,r_2}$ when $r_1=r_2=1$.

Let $\mathcal{D}$ be a subset of $\bR^2$ and $\beta\in (0,1]$. For any function $f$, we define
\begin{equation*}
[f]_{\beta;\mathcal{D}}=\sup\left\{\frac{|f(x)-f(y)|}{|x-y|^\beta}: x,y\in \mathcal{D}, x\neq y\right\}
\end{equation*}
and
$$\|f\|_{\beta;\mathcal{D}}=\|f\|_{L_\infty;\mathcal{D}}+[f]_{\beta;\mathcal{D}}.$$
We denote the space corresponding to $\|\cdot\|_{\beta;\mathcal{D}}$ by $C^{\beta}(\mathcal{D})$. For nonnegative integer $m$, we define
$$
\|f\|_{m,\beta;\mathcal{D}}
=\|f\|_{L_\infty;\mathcal{D}}+[D^mf]_{\beta;\mathcal{D}}.
$$
The space corresponding to $\|\cdot\|_{m,\beta;\mathcal{D}}$ is denoted by $C^{m,\beta}(\mathcal{D})$.

Denote $L_{\text{sym}}^2\big(\{x:|x|=R_0\}\big)$ to be the set of real-valued $L^2$ functions on the circle $\{|x|=R_0\}$ which are even with respect to $x_1$. We use a similar notation for the Sobolev spaces $H^s_{\text{sym}}\big(\{x:|x|=R_0\}\big)$ for $s \ge 0$. Note that $L_{\text{sym}}^2\big(\{x:|x|=R_0\}\big)=H^0_{\text{sym}}\big(\{x:|x|=R_0\}\big)$.

Let $V$ be a Banach space over $\bR$. We say that a sequence  $\{b_n\}$  in $V$ is a Schauder basis of $V$ if for every  $v\in V$, there exists a unique sequence $\{a_n\}$ of scalars such that
$$v=\sum_{n=0}^\infty a_nb_n,$$
where the convergence is in the norm topology.

We first prove an extension lemma, which is useful in our proofs.
\begin{lemma}\label{lemma 3.231}
Let $\gamma\in (0,1)$, $n\ge 0$, and $f\in C^{n,\gamma}(\bR^2\setminus(B_{r_1}(0,r_1)\cup B_{r_2}(0,-r_2)))$. Then there exists  a function $F\in C^{n,\gamma}(\bR^2)$ such that $$F|_{\bR^2\setminus(B_{r_1}(0,r_1)\cup B_{r_2}(0,-r_2))}=f$$ and $$\|F\|_{n,\gamma;\bR^2}\le C\|f\|_{n,\gamma;\bR^2\setminus(B_{r_1}(0,r_1)\cup B_{r_2}(0,-r_2))},$$
where $C$ is  independent of $f$.
\end{lemma}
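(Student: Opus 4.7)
The plan is to treat the cases $n=0$ and $n\geq 1$ separately: $n=0$ by a direct McShane-type formula, and $n\geq 1$ by a localization argument combining classical reflection across the smooth parts of the boundary with a dedicated construction near the tangent point at the origin. Writing $\Omega=\bR^{2}\setminus(B_{r_{1}}(0,r_{1})\cup B_{r_{2}}(0,-r_{2}))$, the two open balls are tangent to each other only at the origin, which is the single non-smooth point of $\partial\Omega$.

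For $n=0$, I would set $F(x)=\inf_{y\in\Omega}\bigl(f(y)+[f]_{\gamma;\Omega}|x-y|^{\gamma}\bigr)$ on $\bR^{2}$ and then truncate between $\pm\|f\|_{L^{\infty};\Omega}$. The McShane construction directly yields $F\in C^{\gamma}(\bR^{2})$, $F|_{\Omega}=f$, and $\|F\|_{\gamma;\bR^{2}}\leq C\|f\|_{\gamma;\Omega}$.

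For $n\geq 1$, I would cover $\partial\Omega$ by finitely many open sets: a small disc $U_{0}=B_{\delta}(0)$ around the origin, together with open sets $U_{1},\ldots,U_{N}$, each lying at positive distance from the origin and meeting only one of $\partial B_{r_{1}}(0,r_{1})$ or $\partial B_{r_{2}}(0,-r_{2})$ along a $C^{\infty}$ arc. On each $U_{j}$ with $j\geq 1$ I would locally flatten the smooth boundary arc by a $C^{\infty}$ diffeomorphism and apply Stein's higher-order reflection formula. Since $U_{j}$ is uniformly separated from the other ball, the reflected points at which $f$ is evaluated all lie in $\Omega$, so this produces an operator bounded $C^{n,\gamma}(\Omega)\to C^{n,\gamma}(U_{j})$ with the required matching on $\partial\Omega\cap U_{j}$. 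After constructing the near-origin piece $F_{0}$ separately, I would combine the $F_{j}$ with $F_{0}$ and an arbitrary smooth bounded filler in the deep interiors of the two balls via a smooth partition of unity, and verify by the product and chain rules that the resulting $F$ lies in $C^{n,\gamma}(\bR^{2})$ with $\|F\|_{n,\gamma;\bR^{2}}\leq C\|f\|_{n,\gamma;\Omega}$.

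The main obstacle is the construction of $F_{0}$ on $U_{0}$. Because the two circles $\partial B_{r_{1}}(0,r_{1})$ and $\partial B_{r_{2}}(0,-r_{2})$ are tangent at the origin, the standard reflection across either circle carries points of the corresponding ball close to the origin directly into the other ball, where $f$ is not defined, so Stein's formula cannot be applied as is. My plan is to replace the usual reflection $\Psi_{1}$ across $\partial B_{r_{1}}(0,r_{1})$ by a smooth perturbation $\widetilde{\Psi}_{1}$ that (i) coincides with $\Psi_{1}$ outside $B_{\delta/2}(0)$, (ii) has the same $n$-jet as $\Psi_{1}$ along $\partial B_{r_{1}}(0,r_{1})\cap U_{0}$, and (iii) takes values in $\Omega$ rather than in $B_{r_{2}}(0,-r_{2})$. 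Such a $\widetilde{\Psi}_{1}$ can be obtained by pushing the image of $\Psi_{1}$ horizontally, away from the tangent line at the origin, along a smooth vector field whose contribution vanishes to order $n$ on $\partial B_{r_{1}}(0,r_{1})$. A symmetric $\widetilde{\Psi}_{2}$ handles $B_{r_{2}}(0,-r_{2})$, and a Stein-type linear combination built from $\widetilde{\Psi}_{1}$ and $\widetilde{\Psi}_{2}$ then produces the required $F_{0}\in C^{n,\gamma}(U_{0})$. The key technical point will be to check that perturbing the reflection in this way preserves the boundary jet-matching underlying Stein's construction, so that the extension remains in $C^{n,\gamma}$ across $\partial\Omega\cap U_{0}$, and in particular at the origin itself.
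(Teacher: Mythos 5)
Your $n=0$ step (McShane) and the off-origin localization with Stein--Hestenes reflection across the smooth arcs of the two circles are fine, but the construction of $F_0$ near the tangency point, which is the entire content of the lemma, has a genuine gap: your requirements (ii) and (iii) on $\widetilde{\Psi}_1$ are mutually incompatible. Write $\Omega=\bR^2\setminus(B_{r_1}(0,r_1)\cup B_{r_2}(0,-r_2))$. Requirement (ii) forces $\widetilde{\Psi}_1(0)=0$ and $D\widetilde{\Psi}_1(0)e_2=D\Psi_1(0)e_2=-e_2$, so for the points $x_\epsilon=(0,\epsilon)\in B_{r_1}(0,r_1)$, whose distance to $\partial B_{r_1}(0,r_1)$ is $\epsilon$, you get $\widetilde{\Psi}_1(x_\epsilon)=(0,-\epsilon)+o(\epsilon)$. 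But $\Omega$ meets the line $\{x_2=-\epsilon\}$ only where $x_1^2\ge 2r_2\epsilon-\epsilon^2$, so escaping $B_{r_2}(0,-r_2)$ at that height requires a horizontal displacement of order $\sqrt{2r_2\epsilon}$, which is incomparably larger than the $o(\epsilon)$ (in fact $O(\epsilon^{n})$-type) correction permitted by a perturbation vanishing to order $n$ on $\partial B_{r_1}(0,r_1)$. Hence $\widetilde{\Psi}_1(x_\epsilon)\in B_{r_2}(0,-r_2)$ for all small $\epsilon$, and (iii) fails. The obstruction is not specific to your particular perturbation: if you only insist that the map equal the identity on the boundary and have differential at the origin sending $e_2$ to $\tau e_1-\lambda e_2$ with $\lambda>0$ (a nonzero normal stretch is indispensable for matching normal derivatives in any Hestenes/Stein-type combination), then taking values in $\Omega$ forces $|\tau|\gtrsim\sqrt{2r_2\lambda/\epsilon}\to\infty$, so such a map cannot even be Lipschitz up to the origin, let alone $C^{n}$ with bounds; the only escape, $\lambda=0$, destroys the jet-matching system. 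Thus no reflection-type formula, however perturbed, yields $F_0$ with uniform $C^{n,\gamma}$ control across the tangency point. (Separately, a combination built from just the two maps $\widetilde{\Psi}_1,\widetilde{\Psi}_2$ could not match jets of order $n\ge 2$ anyway; one needs $n+1$ distinct reflection depths per boundary piece.)

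For contrast, the paper sidesteps reflection entirely: in each disc it solves the Dirichlet problem for the polyharmonic operator $(-\Delta)^{n+1}$ with boundary data $D_\nu^i\tilde f=D_\nu^i f$, $0\le i\le n$, on the corresponding circle, and uses Schauder theory for higher-order elliptic boundary value problems (the references [GGS91] and [ADN]) to obtain $\|\tilde f\|_{n,\gamma;B_{r_1}(0,r_1)}\le C\|f\|_{n,\gamma;\Omega}$, and similarly in the other disc; since each extension interacts only with its own smooth circle, the tangency never enters, and the glued function is $C^{n,\gamma}$ because both extensions reproduce the jet of $f$ on their circles, in particular at the origin. If you prefer an extension-operator route, you would need to work directly with the Whitney jet of $f$ on $\partial\Omega$ (Whitney's extension theorem, after verifying that the intrinsic $C^{n,\gamma}$ bounds on the cuspidal set $\Omega$ near the origin imply the Whitney compatibility conditions), rather than with compositions $f\circ\widetilde{\Psi}_j$.
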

\begin{proof}
It suffices to consider the extension in $B_{r_1}(0,r_1)$.  From  \cite[Theorem 2.19]{GGS91} and \cite[Theorem 9.3]{ADN}, for any $n\ge 0$ there exists a solution $\tilde{f}$ to the equation
\begin{align*}
&(-\Delta)^{n+1}\tilde{f}=0\quad \text{in}\,\, B_{r_1}(0,r_1),\\
&\tilde{f}=f,\,\, D_\nu^1\tilde{f}=D_\nu^1f,\ldots,\,D_\nu^{n}\tilde{f}=D_\nu^{n}f\quad \text{on}\quad \partial B_{r_1}(0,r_1),
\end{align*}
where $\nu$ is the unit normal vector of $\partial B_{r_1}(0,r_1)$. Moreover,
$$\|\tilde{f}\|_{n,\gamma;B_{r_1}(0,r_1)}\le C\sum_{i=0}^{n}\|D_\nu^i f\|_{n-i,\gamma;\partial B_{r_1}(0,r_1)}\le C\|f\|_{n,\gamma;\bR^2\setminus(B_{r_1}(0,r_1)
\cup B_{r_2}(0,-r_2))},$$
where $C$ is independent of $f$.
Similarly, the extension of $f$ to $B_{r_2}(0,-r_2)$ is denoted by $\hat{f}$.  Finally we define
\begin{align*}
F=\tilde{f}\chi_{B_{r_1}(0,r_1)}+\hat{f}\chi_{B_{r_2}(0,-r_2)}+f\chi_{\bR^2\setminus(B_{r_1}(0,r_1)\cup B_{r_2}(0,-r_2))}.
\end{align*}
It is easy to see that $F$ is the desired function.
\end{proof}

Denote
$$
\alpha=\frac{a_0-1}{a_0+1}\in (-1,1).
$$
Let $a(x)$ be defined as in \eqref{eq 2.171}.
As mentioned in the introduction, Li and Vogelius \cite{LV00} constructed  a sequence solutions to
\begin{equation}
D_i(a(x)D_i u)=0\label{eq 1.221}
\end{equation}
in $\bR^2$, whose linear span is dense in $H^s_{\text{sym}}(\{x\,:\,|x|=R_0\})$ for sufficiently large $R_0$.
Following \cite{LV00}, we define $\Psi_j$ as follows:
\begin{align*}
\Psi_j(z)&=\frac {2} {a_0+1}\sum_{k=0}^\infty \alpha^k \frac {z^j} {(kz+\ri)^j}\quad\text{in}\,\, \{z: |z-\ri|<1\},\\
\Psi_j(z)&=(-1)^{\frac{j+1}{2}}\ri z^j+\sum_{k=1}^\infty\alpha^k
\left[\frac{z^j}{(kz+\ri)^j}-\frac {z^j} {(kz-\ri)^j}\right]\\
	    &\text{in} \quad\{z: |z+\ri|>1\, \text{and}\, |z-\ri|>1\},\\
\Psi_j(z)&=-\frac{2}{a_0+1}\sum_{k=0}^\infty \alpha^k\frac{z^j}{(kz-\ri)^j}\quad \text{in}\,\, \{z:|z+\ri|<1\}
\end{align*}
\underline{for $j$ odd}, and
\begin{align*}
\Psi_j(z)& =\frac{2}{a_0+1}\sum_{k=0}^\infty (-\alpha)^k\frac{z^j}{(kz+\ri)^j}\quad \text{in}\,\,\{z:|z-\ri|<1\},\\
\Psi_j(z)&=(-1)^{j/2}z^j+\sum_{k=1}^\infty (-\alpha)^k\left[\frac{z^j}{(kz+\ri)^j}+\frac{z^j}{(kz-\ri)^j}\right]\\
&\text{in}\,\, \{z: |z+\ri|>1\,\text{and}\,|z-\ri|>1\},\\
\Psi_j(z)&=\frac{2}{a_0+1}\sum_{k=0}^\infty (-\alpha)^k \frac{z^j}{(kz-\ri)^j}\quad \text{in} \quad\{z: |z+\ri|<1
\end{align*}
\underline{for $j$ even}. Let $u_j(x_1,x_2)=R_0^{-j}\Re\Psi_j(z)$. It is shown in \cite[Proposition 8.2]{LV00} that $\{u_j\}$ are solutions to \eqref{eq 1.221}. In the lemma below, we first give an explicit representation of $u_j$'s on $\{|x|=R_0\}$ in terms of trigonometric polynomials for $R_0>2$.

\begin{lemma}
                        \label{lem2.2}
Let $R_0>2$ be a constant. For any $j\ge 0$, we have
\begin{align}\nonumber
&u_{2j+1}(x_1,x_2)|_{|x|=R_0}\\
&=(-1)^j\sin(2j+1)\theta-2\sum_{k=1}^\infty\sum_{l=0}^\infty \frac{\alpha^k{{2l+2j+1}\choose {2j}}(-1)^l\sin(2l+1)\theta}{(kR_0)^{2j+2l+2}};\label{eq 3.1311}
\end{align}
for any $j\ge 1$, we have
\begin{align}\label{eq 3.1312}
&u_{2j}(x_1,x_2)|_{|x|=R_0}\nonumber\\
&=(-1)^j \cos2j\theta+2\sum_{k=1}^\infty\sum_{l=0}^\infty \frac{(-\alpha)^k{{2l+2j-1}\choose {2j-1}}(-1)^l\cos (2l\theta)}{(kR_0)^{2l+2j}},
\end{align}
and
\begin{equation}
                \label{eq1.20}
u_0=\frac{1-\alpha}{1+\alpha}=\frac{1}{a_0}.
\end{equation}
\end{lemma}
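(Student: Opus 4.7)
The plan is as follows. The hypothesis $R_0>2$ ensures that every point $z$ on the circle $\{|z|=R_0\}$ satisfies $|z\pm\ri|\geq|z|-1>1$, so the whole circle lies in the ``middle'' region $\{|z-\ri|>1,\,|z+\ri|>1\}$, and $u_j=R_0^{-j}\Re\Psi_j$ can be evaluated on the circle directly from the middle branch of the definition of $\Psi_j$. For $j\geq 1$ I would separate the leading monomial from the infinite tail and, in each tail term, rewrite $z^j/(kz\pm\ri)^j=(k\pm\ri/z)^{-j}$, then expand by the binomial series in the small parameter $\ri/(kz)$, whose modulus on the circle is at most $1/(kR_0)<1/2$ for $k\geq 1$.

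For odd $j$ the defining series contains the antisymmetric pair $(k+\ri/z)^{-j}-(k-\ri/z)^{-j}$; the even-$l$ terms of the binomial expansion drop out because $(-\ri)^l-\ri^l=0$, and the surviving $l=2s+1$ terms satisfy $(-\ri)^{2s+1}-\ri^{2s+1}=-2(-1)^s\ri$, so
\[
(k+\ri/z)^{-j}-(k-\ri/z)^{-j}=-2\ri\,k^{-j}\sum_{s=0}^\infty(-1)^s\binom{j+2s}{2s+1}(kz)^{-(2s+1)}.
\]
Evaluating at $z=R_0e^{\ri\theta}$ and taking $R_0^{-j}\Re$, each factor $z^{-(2s+1)}$ contributes $\sin((2s+1)\theta)/R_0^{2s+1}$; writing $j=2m+1$ and using the binomial identity $\binom{2m+2s+1}{2s+1}=\binom{2m+2s+1}{2m}$ reproduces exactly the series in \eqref{eq 3.1311}, while the leading term $(-1)^{(j+1)/2}\ri z^j$ supplies the isolated $(-1)^m\sin((2m+1)\theta)$. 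The even-$j$ case $j=2m\geq 2$ is parallel: one uses the symmetric pair $(k+\ri/z)^{-j}+(k-\ri/z)^{-j}$, which annihilates odd $l$ and, with $l=2s$, collapses to $2k^{-j}\sum_s(-1)^s\binom{j+2s-1}{2s}(kz)^{-2s}$; the real part on the circle produces $\cos(2s\theta)/R_0^{2s}$, and the identity $\binom{2m+2s-1}{2s}=\binom{2m+2s-1}{2m-1}$ matches \eqref{eq 3.1312}. Formula \eqref{eq1.20} is immediate from the middle branch at $j=0$: the factors $z^0/(kz\pm\ri)^0$ are identically $1$, so $\Psi_0(z)=1+2\sum_{k=1}^\infty(-\alpha)^k=(1-\alpha)/(1+\alpha)$.

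The only nontrivial issue is the interchange of the two infinite summations over $k$ and $s$, which I would dispatch by Fubini on an absolutely convergent double series: the geometric factor $R_0^{-2s}$ dominates the polynomial-in-$s$ growth of $\binom{j+2s}{2s+1}$ and $\binom{j+2s-1}{2s}$, and $|\alpha|^k k^{-j}$ is summable in $k$ since $|\alpha|<1$. Conceptually nothing deep is required; the main obstacle is careful bookkeeping of signs coming from $\ri^l$, $(-1)^l$, and the alternation between $\alpha^k$ and $(-\alpha)^k$ in the odd and even cases, along with verifying the two binomial identities that reconcile my exponents with the form stated in the lemma.
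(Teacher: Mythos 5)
Your proposal is correct and follows essentially the same route as the paper: since $R_0>2$ the circle lies in the middle region, and the paper likewise sets $A_k=\ri/(kR_0e^{\ri\theta})$, expands $(1\pm A_k)^{-(2j+1)}$ (resp.\ the even-index analogue) by the binomial series, cancels terms of the wrong parity in the $\pm$ pair, and takes real parts, with $u_0$ obtained by summing the geometric series. Your version with $(k\pm\ri/z)^{-j}$ and the identities $\binom{j+2s}{2s+1}=\binom{j+2s}{j-1}$, $\binom{j+2s-1}{2s}=\binom{j+2s-1}{j-1}$ is just a relabeling of the same computation, and your Fubini remark fills in the (implicit) justification for interchanging the sums.
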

\begin{proof}
From the definition, for any $j\ge 0$ and $x\in \fB_0$, we have
\begin{align*}
&u_{2j+1}(x_1,x_2)=R_0^{-(2j+1)}\Re\Psi_{2j+1}(z)\\
&=R_0^{-(2j+1)}\Re \left\{(-1)^{j+1}\ri z^{2j+1}+\sum_{k=1}^\infty\alpha^k
\Big[\frac{z^{2j+1}}{(kz+\ri)^{2j+1}}-\frac{z^{2j+1}}{(kz-\ri)^{2j+1}}\Big]\right\},\\
&u_{2j}(x_1,x_2)=R_0^{-2j}\Re \Psi_{2j}(z)\\
&=R_0^{-2j}\Re
\left\{(-1)^{j}z^{2j}+\sum_{k=1}^\infty(-\alpha)^k
\Big[\frac{z^{2j}}{(kz+\ri)^{2j}}+\frac{z^{2j}}{(kz-\ri)^{2j}}\Big] \right\}.
\end{align*}
Set $z=R_0e^{\ri\theta}$ and we have
\begin{align}
\nonumber
&u_{2j+1}(x_1,x_2)|_{|x|=R_0}\nonumber\\
&=\Re\left\{(-1)^{j+1}\ri e^{\ri(2j+1)\theta}
+\sum_{k=1}^\infty\alpha^k\Big[\frac{e^{\ri(2j+1)\theta}}
{(kR_0e^{\ri\theta}+\ri)^{2j+1}}-\frac{e^{\ri(2j+1)\theta}}
{(kR_0e^{\ri\theta}-\ri)^{2j+1}}\Big]\right\}\nonumber\\
&=(-1)^j\sin(2j+1)\theta+\Re \sum_{k=1}^\infty \alpha^k(kR_0)^{-(2j+1)}\Big[\Big(\frac{1}{1+A_k}\Big)^{2j+1}-\Big(\frac{1}{1-A_k}\Big)^{2j+1}\Big],\label{eq 1.051}
\end{align}
where $A_k=\ri/{(kR_0e^{\ri\theta})}$. It is obvious that $|A_k|<1$ for any $k\ge 1$, and
%by Taylor's %expansion we obtain
%\begin{align*}
%\frac{1}{1+A}=\sum_{l=0}^\infty(-A)^l,\\
%\frac{1}{1-A}=\sum_{l=0}^\infty A^l.
%\end{align*}
\begin{equation*}
\Big(\frac{1}{1+A_k}\Big)^{2j+1}=\sum_{l=0}^\infty {{l+2j}\choose {2j}}(-A_k)^l,\quad
\Big(\frac{1}{1-A_k}\Big)^{2j+1}=\sum_{l=0}^\infty {{l+2j}\choose {2j}}A_k^l,
\end{equation*}
where ${m\choose n}=\frac{m!}{n!(m-n)!}$ for $m\ge n$. Therefore,
\begin{equation*}
\Big(\frac{1}{1+A_k}\Big)^{2j+1}-\Big(\frac{1}{1-A_k}\Big)^{2j+1}=-2\sum_{l=0}^\infty {{2l+1+2j}\choose {2j}} A_k^{2l+1}.
\end{equation*}
Plugging the formula above into \eqref{eq 1.051}, we get \eqref{eq 3.1311} for $j\ge 0$. In the same way, for $j\ge 1$, we  obtain \eqref{eq 3.1312}. Finally, \eqref{eq1.20} follows from a simple calculation. The lemma is proved.
\end{proof}

Set
$$
e_{2j-1}=(-1)^{j-1}\sin(2j-1)\theta,\quad e_{2j}=(-1)^j\cos2j\theta
$$
for $j\ge 1$, and $e_0=u_0$. Clearly, for any $s\ge 0$, $\{e_j\}_{j=0}^\infty$
forms an orthogonal basis of $H^s_{\text{sym}}(\{x:|x|=R_0\})$. It is easily seen that for any $s\ge0$, $u_j\in H^s_{\text{sym}}\big(\{|x|=R_0\}\big)$ and $$
\|u_j\|_{H^s(\{|x|=R_0\})}\le C_s (j+s)^s.
$$
Define the Hilbert space
$$
\mathfrak{l}^s=\Big\{a=(a_0,a_1,\ldots):\sum_{j=0}^\infty|a_j|^2
(1+j)^{2s}<\infty\Big\}
$$
with the norm $\|a\|_{\mathfrak{l}^s}=\big(\sum_{j=0}^\infty|a_j|^2(1+j)^{2s}\big)^{1/2}$. Then up to a constant factor, $H_{\text{sym}}^s(\{|x|=R_0\})$ is isometric to $\mathfrak{l}^s$.
Denote $\vec{f}\in\mathfrak{l}^s$ to be the infinite column vector $(f_j)_{j\ge 0}$. Let $M$ be an infinite dimensional matrix such that its $j$th column is $\vec{u}_j$.
%Then under the basis $\{e_j\}$, $T(f)$ can be expressed as $M\vec{f}$, i.e., $(M\vec{f})_i=\sum_jM_{ij}f_j$
%We define a map $$T: H^s_{\text{sym}}\big(\{|x|=R_0\}\big)\rightarrow H^s_{\text{sym}}\big(\{|x|=R_0\}\big) \quad \text{by}\quad T(e_j)=u_j$$ for any $j\ge 0$, which can be represented by an infinite dimensional matrix $M$ as follows.  For any $f\in H^s_{\text{sym}}\big(\{|x|=R_0\}\big)$, $f$ can be expanded in terms of the basis $\{e_j\}$ as $f=\sum_jf_je_j$.
%which implies that instead of $T$ it is sufficient to consider  $M: \mathfrak{l}^s \rightarrow\mathfrak{l}^s$.

Since $u_0=e_0$, we have $M_{0,0}=1$ and $M_{j,0}=0$ for $j\ge 1$.
By Lemma \ref{lem2.2}, $M=id+B$, where $id$ is the identity matrix, and $B$ is defined as follows: for $l,j\ge 1$
\begin{align}
                \label{eq 1.074}
B_{2l-1,2j-1}=-2\sum_{k=1}^\infty\frac{\alpha^k{{2l+2j-3}\choose {2j-2}}}{(kR_0)^{2l+2j-2}},\quad B_{2l,2j}=2\sum_{k=1}^\infty\frac{(-\alpha)^k{{2l+2j-1}\choose {2j-1}}}{(kR_0)^{2l+2j}},
\end{align}
$$
B_{0,2j}=2\sum_{k=1}^\infty\frac{(-\alpha)^k}{(kR_0)^{2j}},
\quad B_{2l-1,2j}=B_{2l,2j-1}=0,\quad B_{l,0}=0.
$$
%Moreover, $B_{2l-1,2j-1}$ and $B_{2j,2l}$ are negative.

The following observation is crucial in our proof.
\begin{lemma}
                                    \label{lem2.3}
When $R_0>2$, the infinite dimensional matrix $\{M_{i,j}\}_{i,j=1}^\infty$ is diagonally dominant by column.
\end{lemma}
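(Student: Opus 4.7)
The plan is to reduce the diagonally-dominant-by-column condition to the stronger estimate
\[
\sum_{i=1}^\infty |B_{i,j}| < 1 \quad \text{for every } j \ge 1,
\]
since then the triangle inequality gives $|M_{j,j}| = |1+B_{j,j}| \ge 1 - |B_{j,j}| > \sum_{i\ne j,\, i\ge 1} |B_{i,j}|$. By \eqref{eq 1.074}, $B$ has a block structure: an odd column $2j'-1$ is nonzero only in odd rows $2l-1$, and an even column $2j'$ only in even rows $2l$ (plus row $0$, which is excluded). I would therefore treat the two parities separately.

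For the odd columns, I would swap the order of the $k$- and $l$-sums (justified by positivity and the geometric convergence guaranteed by $R_0 > 2$) and evaluate the inner sum using the binomial generating function $(1-y)^{-(2j'-1)} = \sum_{m\ge 0} \binom{m+2j'-2}{2j'-2}\,y^m$. Extracting the odd-$m$ part and substituting $y = 1/(kR_0)$ collapses the inner sum to a closed form, yielding
\[
\sum_{l=1}^\infty |B_{2l-1,2j'-1}| = \sum_{k=1}^\infty |\alpha|^k \left[\frac{1}{(kR_0-1)^{2j'-1}} - \frac{1}{(kR_0+1)^{2j'-1}}\right].
\]
The analogous even-part identity gives
\[
\sum_{l=1}^\infty |B_{2l,2j'}| = \sum_{k=1}^\infty |\alpha|^k \left[\frac{1}{(kR_0-1)^{2j'}} + \frac{1}{(kR_0+1)^{2j'}} - \frac{2}{(kR_0)^{2j'}}\right],
\]
and since $-2/(kR_0)^{2j'}$ dominates the positive piece $+1/(kR_0+1)^{2j'}$, this sum is bounded term by term by $\sum_k |\alpha|^k [(kR_0-1)^{-2j'} - (kR_0+1)^{-2j'}]$. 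Both parities thus reduce to controlling the single family
\[
S_m(R_0) := \sum_{k=1}^\infty \left[\frac{1}{(kR_0-1)^m} - \frac{1}{(kR_0+1)^m}\right], \quad m \ge 1.
\]

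The key observation that pins down the threshold $R_0 > 2$ is the telescoping identity at the endpoint,
\[
S_m(2) = \sum_{k=1}^\infty \left[\frac{1}{(2k-1)^m} - \frac{1}{(2k+1)^m}\right] = 1.
\]
For $R_0 > 2$, differentiating each summand in $R_0$ gives $-mk[(kR_0-1)^{-m-1} - (kR_0+1)^{-m-1}] < 0$, so every term is strictly decreasing in $R_0$; consequently $S_m(R_0) < S_m(2) = 1$ uniformly in $m\ge 1$. Combined with $|\alpha| < 1$, this gives $\sum_{i\ge 1}|B_{i,j}| < 1$ for each $j \ge 1$, which yields the claimed diagonal dominance. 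The main obstacle is not the final arithmetic but recognizing the right closed form: once both parities are reduced to the single family $S_m$, the telescoping at $R_0=2$ makes transparent why $R_0 > 2$ is the sharp threshold, and the monotonicity in $R_0$ finishes the argument.
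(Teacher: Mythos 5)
Your proof is correct and follows essentially the same route as the paper: reduce column dominance to $\sum_{i\ge 1}|B_{i,j}|<1$, swap the $k$- and $l$-sums, use the binomial generating function to obtain the closed forms, and conclude from the telescoping value $1$ at $R_0=2$ together with monotonicity in $R_0$ and $|\alpha|<1$. The only cosmetic differences are that you fold the even columns into the single family $S_m$ via $(kR_0+1)^{-2j}\le (kR_0)^{-2j}$, whereas the paper telescopes the three-term bracket at $R_0=2$ directly, and your displayed ``equalities'' involving $|\alpha|^k$ are really inequalities $\le$ when $\alpha<0$, which is all that is needed.
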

\begin{proof}
Since $M=id+B$, it suffices to show that
$\sum_{l=1}^\infty|B_{l,j}|<1$ for $j\ge 1$.
We first consider odd number columns.  When $\alpha\in (0,1)$, obviously  $B_{2l-1,2j-1}<0$. On the other hand, when $\alpha\in (-1,0)$, by \eqref{eq 1.074} we have $B_{2l-1,2j-1}>0$.
%$$
%B_{2l-1,2j-1}=-2\sum_{k=1}^\infty\frac{\alpha^k {2l+2j-3\choose %2j-2}}{(R_0k)^{2l+2j-{\color{red}3}{\color{blue}2}}}>0.
%$$
Thus, for $j\ge 1$ we have
\begin{align}
                    \label{eq 3.121}
\sum_{l=1}^\infty|B_{2l-1,2j-1}|=\sum_{l=1}^\infty2\Big|\sum_{k=1}^\infty\frac{\alpha^k{{2l+2j-3}\choose {2j-2}}}{(R_0k)^{2l+2j-2}}\Big|=2\Big|\sum_{l=1}^\infty\sum_{k=1}^\infty \frac{\alpha^k{{2l+2j-3}\choose {2j-2}}}{(R_0k)^{2l+2j-2}}\Big|.
%{\color{red}-\sum_{k=1}^{\infty}\frac{\alpha^k {{4j-3}\choose {2j-2}}}{(R_0k)^{4j-2}}}
\end{align}
Note that
\begin{align}\nonumber
&2\sum_{l=1}^\infty\sum_{k=1}^\infty \frac{\alpha^k{{2l+2j-3}\choose {2j-2}}}{(R_0k)^{2l+2j-2}}=2\sum_{k=1}^\infty\frac{\alpha^k}{(kR_0)^{2j-1}}\sum_{l=1}^\infty \frac{{{2l+2j-3}\choose {2j-2}}}{(R_0k)^{2l-1}}\\\nonumber
&=\sum_{k=1}^\infty\frac{\alpha^k}{(kR_0)^{2j-1}}\sum_{l=0}^{\infty}{{2j-2+l}\choose {2j-2}}\left(\Big(\frac{1}{kR_0}\Big)^l-\Big(-\frac{1}{kR_0}\Big)^l\right)\\\nonumber
&=\sum_{k=1}^\infty \frac{\alpha^k}{(kR_0)^{2j-1}}
\left(\Big(\frac{1}{1-1/(kR_0)}\Big)^{2j-1}-\Big(\frac{1}{1+1/(kR_0)}\Big)^{2j-1}\right)\\
%\label{eq 3.122}
&=\sum_{k=1}^\infty\alpha^k
\left(\Big(\frac{1}{kR_0-1}\Big)^{2j-1}-\Big(\frac{1}{kR_0+1}\Big)^{2j-1}\right)\label{eq 4.32}.
\end{align}
Since $|\alpha|<1$ and $R_0>2$, we have
\begin{align}\nonumber
&\left|\sum_{k=1}^\infty\alpha^k\left(\Big(\frac{1}{kR_0-1}\Big)^{2j-1}
-\Big(\frac{1}{kR_0+1}\Big)^{2j-1}\right)\right|\\
&<\sum_{k=1}^\infty\left(\Big(\frac{1}{2k-1}\Big)^{2j-1}
-\Big(\frac{1}{2k+1}\Big)^{2j-1}\right)=1.\label{eq 3.123}
\end{align}
Then combining \eqref{eq 3.121}-\eqref{eq 3.123}, we obtain
\begin{equation*}
\sum_{l=1}^\infty|B_{2l-1,2j-1}|<1.
\end{equation*}
Similarly, for $j\ge 1$ we compute
\begin{align}\label{eq 4.31}
&\sum_{l=1}^\infty|B_{2l,2j}|\le \sum_{l=1}^\infty\sum_{k=1}^\infty 2\frac{{|\alpha|}^k {{2l+2j-1}\choose {2j-1}}}{(kR_0)^{2l+2j}}\\\nonumber
&=\sum_{k=1}^\infty\frac{|\alpha|^k}{(kR_0)^{2j}}\sum_{l=1}^\infty 2\frac{{{2l+2j-1}\choose {2j-1}}}{(kR_0)^{2l}}\\\nonumber
&=\sum_{k=1}^\infty\frac{|\alpha|^k}{(kR_0)^{2j}}2
\left(\sum_{l=0}^\infty\frac{{{2l+2j-1}\choose {2j-1}}}{(kR_0)^{2l}}-1\right)\\\nonumber
&=\sum_{k=1}^\infty\frac{|\alpha|^k}{(kR_0)^{2j}}
\left(\Big(\frac{1}{1-1/(kR_0)}\Big)^{2j}+\Big(\frac{1}{1+1/(kR_0)}\Big)^{2j}-2\right)\\\label{eq 1.271}
&=\sum_{k=1}^\infty|\alpha|^k \left(\Big(\frac{1}{kR_0-1}\Big)^{2j}+\Big(\frac{1}{kR_0+1}\Big)^{2j}
-2\Big(\frac{1}{kR_0}\Big)^{2j}\right).
\end{align}
Note that for any $k\ge 1$, $R_0>2$, and $j\ge 1$, by convexity,
$$
\Big(\frac{1}{kR_0-1}\Big)^{2j}+\Big(\frac{1}{kR_0+1}\Big)^{2j}
-2\Big(\frac{1}{kR_0}\Big)^{2j}>0.
$$
Therefore,  the right-hand side of \eqref{eq 1.271} is less than
\begin{equation*}
\sum_{k=1}^\infty\Big(\frac{1}{kR_0-1}\Big)^{2j}+\Big(\frac{1}{kR_0+1}\Big)^{2j}
-2\Big(\frac{1}{kR_0}\Big)^{2j},
\end{equation*}
which is decreasing with respect to $R_0$ because the right-hand side of \eqref{eq 4.31} is decreasing. Thus,  the left-hand side of \eqref{eq 1.271} with $|\alpha|=1$ is less than
\begin{align*}
&\sum_{k=1}^\infty \Big(\frac{1}{2k-1}\Big)^{2j}+\Big(\frac{1}{2k+1}\Big)^{2j}
-2\Big(\frac{1}{2k}\Big)^{2j}\\
&=1+2\sum_{k=1}^\infty\left(\Big(\frac{1}{2k+1}\Big)^{2j}-\Big(\frac{1}{2k}\Big)^{2j}\right)<1.
\end{align*}
Therefore,
\begin{equation*}
\sum_{l=1}^\infty|B_{2l,2j}|<1.
\end{equation*}
The lemma is proved.
\end{proof}

In \cite[Proposition 8.5]{LV00}, it is proved that for $R_0$ sufficiently large depending on $s$, $\text{span}\{u_j|_{\{x: |x|=R_0\}}\}$ is dense in $H_{\text{sym}}^s(\{|x|=R_0\})$. In the following proposition, by using Lemma \ref{lem2.3} we prove that $R_0>2$ is sufficient to show that $\{u_j|_{\{x: |x|=R_0\}}\}$ forms a Schauder basis in $H_{\text{sym}}^s(\{|x|=R_0\})$ for any $s\ge 0$.

\begin{proposition}\label{prop 1.291}
For any $R_0>2$ and $s\ge 0$, $\{u_j|_{\{x: |x|=R_0\}}\}$ forms a Schauder basis in $H_{\text{sym}}^s(\{|x|=R_0\})$, i.e., for any $f\in H_{\text{sym}}^s(\{|x|=R_0\})$, there exists a unique sequence $\{a_j\}_{j=0}^\infty$ in $\bR$ such that $f=\sum_{j=0}^\infty a_ju_j$. Moreover, we have
\begin{equation}
                                \label{eq12.44}
\sum_{j=0}^\infty a_j^2 (1+j)^{2s}\le C(s,R_0)\|f\|_{H^s(\{|x|=R_0\})}^2.
\end{equation}
\end{proposition}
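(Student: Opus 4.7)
The plan is to translate the statement into an operator-theoretic fact about $M = \text{id} + B$ acting on $\mathfrak{l}^s$, using the isometry between $H^s_{\text{sym}}(\{|x|=R_0\})$ and $\mathfrak{l}^s$ given by Fourier expansion in $\{e_k\}$. Concretely, I would show that $M : \mathfrak{l}^s \to \mathfrak{l}^s$ is bounded and invertible for every $s \geq 0$; then for $f \in H^s_{\text{sym}}$ with Fourier coefficients $\vec b$, the unique sequence $\vec a = M^{-1} \vec b \in \mathfrak{l}^s$ gives the Schauder representation $f = \sum_j a_j u_j$ together with the estimate \eqref{eq12.44}. The main obstacle is that Lemma \ref{lem2.3} provides column diagonal dominance, which naturally yields invertibility on $\ell^1$ (or injectivity of $M^T$ on $\ell^\infty$), whereas we need to work on the weighted Hilbert space $\mathfrak{l}^s$; the bridge will be compactness together with a bootstrap.

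First I would establish the quantitative decay $|B_{k,j}| \leq C \rho^{k+j}$ for some $\rho = \rho(R_0) \in (0,1)$. Indeed, using \eqref{eq 1.074} and the crude bound $\binom{m}{n} \leq 2^m$, each term of $B_{k,j}$ is controlled by $|\alpha|^m (2/R_0)^{k+j}$, and $2/R_0 < 1$ since $R_0 > 2$. This exponential decay immediately implies: (i) $B$ is Hilbert--Schmidt on $\mathfrak{l}^s$, hence compact there, so $M = \text{id} + B$ is Fredholm of index $0$ on $\mathfrak{l}^s$; and (ii) $B : \ell^2 \to \mathfrak{l}^s$ is bounded, a fact that will be used in the bootstrap.

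Next I would prove injectivity of $M$ on $\ell^2 = \mathfrak{l}^0$ by passing to the adjoint. Since $B$ has real entries, $M^* = M^T$; and inspecting the proof of Lemma \ref{lem2.3} one can upgrade its conclusion to a \emph{uniform} statement $\sum_l |B_{l,j}| \leq 1 - \delta$ for all $j \geq 1$, with $\delta = \delta(R_0) > 0$, by observing that the telescoping bounds in \eqref{eq 3.123} and \eqref{eq 1.271} equal $1$ exactly at $R_0 = 2$ and their values tend to $0$ as $j \to \infty$, leaving only finitely many columns to handle individually. Suppose now $M^T \vec c = 0$ with $\vec c \in \ell^2 \subset \ell^\infty$. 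The zeroth equation gives $c_0 = 0$ because $M_{j,0} = 0$ for $j \geq 1$ and $M_{0,0} = 1$. For $k \geq 1$, $c_k = -\sum_j B_{j,k} c_j$, so $|c_k| \leq (1-\delta) \|\vec c\|_{\ell^\infty}$; taking the supremum forces $\vec c = 0$. The Fredholm alternative then yields invertibility of $M$ on $\ell^2$.

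Finally, to promote this to $\mathfrak{l}^s$ for $s > 0$, given $\vec b \in \mathfrak{l}^s \subset \ell^2$, let $\vec a \in \ell^2$ be the unique solution of $M \vec a = \vec b$. The identity $\vec a = \vec b - B \vec a$, combined with $\vec b \in \mathfrak{l}^s$ and the boundedness of $B : \ell^2 \to \mathfrak{l}^s$ from (ii), gives $\vec a \in \mathfrak{l}^s$ with
\[
\|\vec a\|_{\mathfrak{l}^s} \leq \|\vec b\|_{\mathfrak{l}^s} + \|B\|_{\ell^2 \to \mathfrak{l}^s} \|M^{-1}\|_{\ell^2 \to \ell^2} \|\vec b\|_{\ell^2} \leq C(s, R_0) \|\vec b\|_{\mathfrak{l}^s},
\]
so $M$ is invertible on $\mathfrak{l}^s$. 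For $f \in H^s_{\text{sym}}$ corresponding to $\vec b \in \mathfrak{l}^s$, setting $\vec a = M^{-1} \vec b$ and approximating by $\vec a^N = (a_0, \ldots, a_N, 0, 0, \ldots)$, the boundedness of $M$ on $\mathfrak{l}^s$ yields $\sum_{j=0}^N a_j u_j = \sum_k (M \vec a^N)_k e_k \to \sum_k b_k e_k = f$ in $H^s_{\text{sym}}$; uniqueness of the $\{a_j\}$ is the injectivity of $M$, and \eqref{eq12.44} is exactly the norm bound just displayed.
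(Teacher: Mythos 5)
Your argument is correct, but it reaches the key fact --- invertibility of $M=\mathrm{id}+B$ on $\mathfrak{l}^s$ --- by a genuinely different route than the paper. The paper (Lemmas \ref{lem 2.5} and \ref{lem 2.6}) works directly on $\mathfrak{l}^s$: it extracts the finite block $Q=(M_{l,j})_{l,j=1}^N$, which is invertible by the column diagonal dominance of Lemma \ref{lem2.3} with $\|Q^{-1}\|_{l_1}$ controlled via \cite[Corollary 1]{Var75}, and then handles the tail blocks by a contraction argument based on the decay estimates \eqref{eq 1.153} and \eqref{eq 1.131}, choosing $N$ large depending on $\alpha$, $R_0$, $s$. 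You instead use the entrywise bound $|B_{k,j}|\le C\rho^{k+j}$ with $\rho=2/R_0<1$ (which does follow from \eqref{eq 1.074} and $\binom{m}{n}\le 2^m$) to conclude that $B$ is Hilbert--Schmidt, hence compact, on every $\mathfrak{l}^s$, so $M$ is Fredholm of index zero; you get injectivity of the adjoint on $\ell^2$ from a \emph{uniform} column bound $\sum_{l\ge 1}|B_{l,j}|\le 1-\delta$ --- this strengthening of Lemma \ref{lem2.3} is not stated in the paper but is correct, since for fixed $R_0>2$ the bounds in \eqref{eq 3.123} and \eqref{eq 1.271} are strictly below $1$ for each $j$ and tend to $0$ as $j\to\infty$ by dominated convergence, so their supremum over $j$ is below $1$ --- and the identity $\vec a=\vec b-B\vec a$ together with the boundedness of $B:\ell^2\to\mathfrak{l}^s$ bootstraps invertibility from $\ell^2$ to every $\mathfrak{l}^s$ with the bound \eqref{eq12.44}. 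Your approach is softer: it avoids the finite-section analysis and the appeal to Varga's estimate, and it cleanly separates the two ingredients (decay gives compactness and the $s$-bootstrap, diagonal dominance gives injectivity), at the price of invoking Fredholm theory and the extra uniformity argument; the paper's proof is more hands-on, constructing the inverse by an explicit contraction on each $\mathfrak{l}^s$. The concluding step is the same in both: $M$ induces a bounded invertible map on $H^s_{\text{sym}}$ sending $e_j$ to $u_j$, convergence of the partial sums and the estimate \eqref{eq12.44} follow from boundedness and Parseval, and uniqueness of the coefficients follows by applying the bounded inverse to any convergent expansion (a point you state tersely as ``injectivity of $M$,'' at the same level of detail as the paper itself).
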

Before proving Proposition \ref{prop 1.291}, we first show that the matrix $M$  defines a bounded and invertible operator on $\mathfrak{l}^s$ in the following lemmas.

%{\color{red}Since $u_0=e_0$, it is sufficient to verify $\tilde{M}:=\{M_{lj}\}_{l,j\ge 1}$ is invertible. We divide the proof into three steps.}

\begin{lemma}\label{lem 2.5}
The matrix $M$ defines a bounded operator on $\mathfrak{l}^s$.
\end{lemma}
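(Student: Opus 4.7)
The plan is to write $M = id + B$ and reduce to proving that $B$ is bounded on $\mathfrak{l}^s$. The Hilbert space $\mathfrak{l}^s$ is unitarily equivalent to $\ell^2$ via $(a_j) \mapsto ((1+j)^s a_j)$, and under this identification $B$ corresponds to the matrix
\[ \tilde B_{l,j} = (1+l)^s (1+j)^{-s} B_{l,j}. \]
Boundedness of $B$ on $\mathfrak{l}^s$ is therefore equivalent to boundedness of $\tilde B$ on $\ell^2$, which by Schur's test will follow from the two estimates
\[ \sup_{l \geq 0} \sum_{j \geq 0} |\tilde B_{l,j}| < \infty, \qquad \sup_{j \geq 0} \sum_{l \geq 0} |\tilde B_{l,j}| < \infty. \]

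The first step is to collect a crude pointwise bound on the entries of $B$. Starting from \eqref{eq 1.074} and the analogous formulas for $B_{2l,2j}$ and $B_{0,2j}$, and using the trivial combinatorial inequality $\binom{m}{n} \leq 2^m$, together with $|\alpha| < 1$ and the convergence of $\sum_{k\geq 1} k^{-p}$ for $p \geq 2$, one obtains
\[ |B_{l,j}| \leq C(R_0)\left(\tfrac{2}{R_0}\right)^{l+j} \]
uniformly over all indices $(l,j)$ for which the entry is nonzero. The three cases (both indices odd, both even positive, and the row $l = 0$) are entirely parallel; in each of them the binomial coefficient appearing in the explicit formula is the only nontrivial piece, which the $2^m$ bound controls.

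Given this crude bound, both Schur sums are elementary. For the row sum,
\[ \sum_{j \geq 0} |\tilde B_{l,j}| \leq C(R_0)(1+l)^s\left(\tfrac{2}{R_0}\right)^l \sum_{j \geq 0} \left(\tfrac{2}{R_0}\right)^j (1+j)^{-s} \leq C'(s, R_0)(1+l)^s\left(\tfrac{2}{R_0}\right)^l, \]
which is uniformly bounded in $l$ because $R_0 > 2$ forces the sequence $(1+l)^s(2/R_0)^l$ to be bounded (in fact decaying to zero). The column sum is treated identically, since the upper bound $|B_{l,j}| \leq C(R_0)(2/R_0)^{l+j}$ is symmetric in $l$ and $j$.

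There is no real obstacle here. The decisive feature is precisely the hypothesis $R_0 > 2$, which makes $2/R_0 < 1$; the resulting geometric decay of the entries of $B$ is more than enough to absorb the polynomial ratio $(1+l)^s/(1+j)^s$ coming from the weight of $\mathfrak{l}^s$. The more delicate cancellations exploited in Lemma \ref{lem2.3} are not needed for mere boundedness; they will be reserved for establishing the invertibility of $M$ in the next step.
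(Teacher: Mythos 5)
Your proof is correct, and it takes a genuinely different route from the paper's. The paper does not use a crude entrywise bound: it resums the binomial series exactly (as in \eqref{eq 4.32} and \eqref{eq 1.271}) to obtain the sharper row-sum estimate $\sum_{j}|B_{l,j}|\le CR_0^{-l-1}$ (see \eqref{eq 1.153}, \eqref{eq 3.1210}), and then bounds $\|Mf\|_{\mathfrak{l}^s}$ directly, using only $\|f\|_{l_\infty}\le\|f\|_{\mathfrak{l}^s}$ and the convergence of $\sum_l(1+l)^{2s}R_0^{-2l-2}$ — no Schur test. You instead control each entry by $\binom{m}{n}\le 2^m$, which gives $|B_{l,j}|\le C(2/R_0)^{l+j}$, and then apply Schur's test to the conjugated matrix $\tilde B_{l,j}=(1+l)^s(1+j)^{-s}B_{l,j}$ on $\ell^2$. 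Your estimate is lossier (geometric rate $2/R_0$ rather than $1/R_0$), and it makes the hypothesis $R_0>2$ do the work of beating the factor $2^{l+j}$, whereas in the paper's proof of this lemma $R_0>2$ enters only mildly (to control the $k=1$ term via $k-1/R_0\ge 1/2$); but for mere boundedness this loss is harmless, and your argument avoids the generating-function computation entirely. Two small remarks: the weight $(1+l)^s(1+j)^{-s}$ is not symmetric in $l$ and $j$, so the two Schur sums are not literally ``identical'' — the row sum needs boundedness of $(1+l)^s(2/R_0)^l$ while the column sum needs convergence of $\sum_l(1+l)^s(2/R_0)^l$, both of which hold — and note that the paper's sharper block estimates are recycled later in the proof of Lemma \ref{lem 2.6}, so your shortcut here would not fully replace that computation in the overall argument.
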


\begin{proof}
For any nonnegative integer $N$, we first estimate the block $(B_{l,j})_{l\ge N,j\ge 0}$. Clearly,
\begin{align}
\sum_{l\ge N}\sum_{j=0}^\infty|B_{l,j}|= \sum_{2l\ge N}\sum_{j=0}^\infty|B_{2l,2j}|+\sum_{2l-1\ge N}\sum_{j=1}^\infty|B_{2l-1,2j-1}|.\label{eq 3.124}
\end{align}
%where $[\cdot]$ is the floor function.
From \eqref{eq 1.074} and the fact that $B_{l,0}=0$ for $l\ge 0$, we have
\begin{align}\nonumber
 &\sum_{2l\ge N}\sum_{j=0}^\infty|B_{2l,2j}|= \sum_{2l\ge N}\sum_{j=1}^\infty2\left|\sum_{k=1}^\infty\frac{(-\alpha)^k{{2l+2j-1}\choose {2l}}}{(kR_0)^{2l+2j}}\right|\\
 \nonumber
& \le \sum_{2l\ge N}\sum_{j=1}^\infty2\sum_{k=1}^\infty\frac{|\alpha|^k{{2l+2j-1}\choose {2l}}}{(kR_0)^{2l+2j}}\\
\nonumber
 &\le \sum_{2l\ge N}\sum_{k=1}^{\infty}|\alpha|^k
\left(\Big(\frac{1}{kR_0-1}\Big)^{2l+1}-\Big(\frac{1}{kR_0+1}\Big)^{2l+1}\right)\\
 \nonumber
 &=\sum_{2l\ge N}\frac{1}{R_0^{2l+1}}\sum_{k=1}^\infty|\alpha|^k
\left(\Big(\frac{1}{k-1/R_0}\Big)^{2l+1}-\Big(\frac{1}{k+1/R_0}\Big)^{2l+1}\right).
%& \le CR_0^{-N},\label{eq 1.072}
\end{align}
Since $R_0>2$,
\begin{align*}
&\sum_{k=1}^\infty|\alpha|^k
\left(\Big(\frac{1}{k-1/R_0}\Big)^{2l+1}-\Big(\frac{1}{k+1/R_0}\Big)^{2l+1}\right)\\
&\le2 \sum_{k=1}^\infty|\alpha|^k\Big(\frac{1}{k-1/2}\Big)^{2l+1}\le C(\alpha),
\end{align*}
where $C(\alpha)$ only depends on $\alpha$. Then we get
\begin{equation}
\sum_{2l\ge N}\sum_{j=0}^\infty|B_{2l,2j}|\le
\sum_{2l\ge N}\frac{C(\alpha)}{R_0^{2l+1}}
\le C(\alpha)R_0^{-N-1}.\label{eq 1.072}
\end{equation}
Similarly by \eqref{eq 1.074}, we obtain
\begin{align}
\nonumber
&\sum_{2l-1\ge N}\sum_{j=1}^\infty|B_{2l-1,2j-1}|\\
&\le\sum_{2l-1\ge N}\sum_{k=1}^\infty|\alpha|^k
\left(\Big(\frac{1}{kR_0-1}\Big)^{2l}+\Big(\frac{1}{kR_0+1}\Big)^{2l}\right)\le CR_0^{-N-1}.\label{eq 1.071}
\end{align}
Combining \eqref{eq 3.124}, \eqref{eq 1.072}, and \eqref{eq 1.071}, we have
\begin{equation}
\sum_{l\ge N}\sum_{j=0}^{\infty}|B_{l,j}|\le CR_0^{-N-1}.\label{eq 1.153}
\end{equation}

Now we are ready to show that $M$ is a bounded operator on $\mathfrak{l}^s$.  For any $f\in \mathfrak{l}^s$ with
\begin{equation*}
\|f\|_{\mathfrak{l}^s}^2=\sum_{l=0}^\infty|f_l|^2(1+l)^{2s}=1,
\end{equation*}
we have
\begin{align}
\|Mf\|_{\mathfrak{l}^s}^2=\sum_{l=0}^\infty(1+l)^{2s}|(Mf)_l|^2=\sum_{l=0}^\infty(1+l)^{2s}
\Big|\sum_{j=0}^\infty M_{l,j}f_j\Big|^2.\label{eq 3.129}
\end{align}
%For any $f\in H^s_{sym},$ we can expand $f$ by
%\begin{equation*}
%f=\sum_{i=0}^\infty f_ie_i,
%\end{equation*}
%where $\{f_i\}$ satisfies
%\begin{equation*}
%\sum_{i=0}^\infty |f_i|^2(i+1)^{2s}<\infty.
%\end{equation*}
%Since $\{u_j\}$ is also a basis of $H^s_{sym}$, we can also expand $f$ as follows
%\begin{equation}
%f=\sum_{i=0}^\infty g_iu_i.\label{eq 1.155}
%\end{equation}
%Denote the column vectors $\vec{f}=(f_i)$ and $\vec{g}=(g_i)$. Then
%\begin{equation*}
%\overline{g}={M}\overline{f}.
%\end{equation*}
%We claim
%\begin{equation}\label{eq 1.151}
%\sum_{i=0}^\infty|g_i|^2(i+1)^{2s}<\infty.
%\end{equation}
Since $M_{l,j}=\delta_{lj}+B_{l,j}$, by the Cauchy-Schwarz inequality
\begin{equation}\label{eq 3.128}
\Big|\sum_{j=0}^\infty M_{l,j}f_j\Big|^2
=\Big|f_l+\sum_{j=0}^\infty B_{l,j}f_j\Big|^2\le 2\Big(f_l^2+\Big|\sum_{j=0}^\infty B_{l,j}f_j\Big|^2\Big).
\end{equation}
From \eqref{eq 3.129} and \eqref{eq 3.128}, we get
$$
\|Mf\|_{\mathfrak{l}^s}^2\le 2+2\sum_{l=0}^\infty(1+l)^{2s}\Big|\sum_{j=0}^\infty B_{l,j}f_j\Big|^2.
$$
Notice that \eqref{eq 1.153} implies for $l\ge 0$,
\begin{equation}\label{eq 3.1210}
\sum_{j=0}^\infty|B_{l,j}|\le CR_0^{-l-1}.
\end{equation}
Combining with fact that $\|f\|_{l_\infty}\le 1$, we obtain
\begin{align*}
&\sum_{l=0}^\infty(1+l)^{2s}\Big|\sum_{j=0}^\infty B_{l,j}f_j\Big|^2
\le \sum_{l=0}^\infty (1+l)^{2s}\Big(\sum_{j=0}^\infty |B_{l,j}|\Big)^2\\
&\le C^2\sum_{l=0}^\infty(1+l)^{2s}R_0^{-2l-2}\le C(\alpha,s),
\end{align*}
where $C(\alpha,s)$ depends on $\alpha$ and $s$. Therefore,  the proof is completed.
\end{proof}
% Now we are ready to show that $M$ is invertible.
\begin{lemma}\label{lem 2.6}
The operator on $\mathfrak{l}^s$ defined by $M$ is invertible.
\end{lemma}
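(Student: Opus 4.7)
The plan is to combine the column diagonal dominance of Lemma \ref{lem2.3} with a Fredholm argument powered by the geometric decay of the entries of $B$. First, because $u_0=e_0$, we have $M_{0,0}=1$ and $M_{l,0}=0$ for $l\ge 1$, so $M$ has the block upper-triangular form
\[
M=\begin{pmatrix}1&r^T\\ 0&M'\end{pmatrix},\qquad M':=(M_{i,j})_{i,j\ge 1},
\]
and consequently the invertibility of $M$ on $\mathfrak{l}^s$ reduces to the invertibility of $M'$ on the subspace $\{f\in\mathfrak{l}^s:f_0=0\}$.

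Second, I would extract a pointwise decay estimate $|B_{i,j}|\le C\rho^{i+j}$ with $\rho:=2/R_0<1$ by inserting the crude bound $\binom{2l+2j-3}{2j-2}\le 2^{2l+2j-3}$ into the formulas \eqref{eq 1.074} (and its even-block/first-row analogues). This immediately gives that the weighted Hilbert--Schmidt quantity
\[
\sum_{i,j\ge 0}(1+i)^{2s}(1+j)^{-2s}|B_{i,j}|^2\le C^2\Bigl(\sum_i(1+i)^{2s}\rho^{2i}\Bigr)\Bigl(\sum_j(1+j)^{-2s}\rho^{2j}\Bigr)
\]
is finite for every $s\ge 0$, so that $B$, and in particular its principal submatrix $B'=M'-I$, is compact on $\mathfrak{l}^s$. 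Thus $M'$ is Fredholm of index zero on the relevant subspace, and it only remains to prove injectivity.

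Third, suppose $f\in\mathfrak{l}^s$ with $f_0=0$ and $M'f=0$, i.e.\ $f_i=-\sum_{j\ge 1}B_{i,j}f_j$ for $i\ge 1$. Using the pointwise decay and Cauchy--Schwarz against the weight $(1+j)^s$,
\[
|f_i|\le C\rho^i\sum_{j\ge 1}\rho^j|f_j|\le C'\rho^i\|f\|_{\mathfrak{l}^s},
\]
so $f$ decays geometrically and in particular belongs to $\ell^1$. Summing the pointwise inequality $|f_i|\le\sum_{j\ge 1}|B_{i,j}||f_j|$ over $i\ge 1$ and swapping orders,
\[
\sum_{i\ge 1}|f_i|\le\sum_{j\ge 1}|f_j|\sum_{i\ge 1}|B_{i,j}|=\sum_{j\ge 1}c_j|f_j|,
\]
where $c_j<1$ for every $j\ge 1$ by Lemma \ref{lem2.3}. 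Rearranging gives $\sum_{j\ge 1}(1-c_j)|f_j|\le 0$, and since every term is non-negative we conclude $f\equiv 0$. Combined with the Fredholm index zero property, $M'$ (and hence $M$) is bijective on $\mathfrak{l}^s$.

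The step I expect to be the main obstacle is securing the decay $|B_{i,j}|\le C(2/R_0)^{i+j}$ uniformly, since it is needed twice: once for the Hilbert--Schmidt compactness and once for the bootstrap from $\mathfrak{l}^s$ into $\ell^1$. Once this pointwise bound is available, the Fredholm/$\ell^1$ endgame reduces to the strict pointwise column inequality $c_j<1$ already recorded in Lemma \ref{lem2.3}, with no need to upgrade it to a uniform-in-$j$ bound.
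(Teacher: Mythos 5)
Your argument is correct, but it takes a genuinely different route from the paper. You share the same starting points: the block-triangular reduction to $(M_{l,j})_{l,j\ge 1}$ and the use of Lemma \ref{lem2.3}; and your entrywise bound $|B_{i,j}|\le C(2/R_0)^{i+j}$ does follow from \eqref{eq 1.074} via $\binom{n}{k}\le 2^n$ and $\sum_k|\alpha|^k<\infty$, so the step you flag as the main obstacle is fine. From there, however, you go soft: weighted Hilbert--Schmidt bound $\Rightarrow$ compactness of $B$ on $\mathfrak{l}^s$ $\Rightarrow$ $M=id+B$ Fredholm of index zero, with injectivity settled by the $\ell^1$ bootstrap $|f_i|\le C\rho^i\|f\|_{\mathfrak{l}^s}$ and the strict column-sum inequality $c_j=\sum_{i\ge1}|B_{i,j}|<1$, which yields $\sum_j(1-c_j)|f_j|\le 0$ and hence $f=0$. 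The paper instead runs a finite-section scheme: it truncates at a large $N$, inverts the corner block $Q=(M_{l,j})_{l,j=1}^N$ using a \emph{uniform-in-column} diagonal-dominance gap $C(\alpha,R_0)>0$ together with \cite[Corollary 1]{Var75} to get $\|Q^{-1}\|_{l_1}\le C(\alpha,R_0)^{-1}$, converts this to an $\mathfrak{l}^s$ bound of size $\sqrt N(1+N)^s$, and then beats that growth by the geometric smallness $\le CR_0^{-N-1}$ of the off-corner blocks \eqref{eq 1.131}, \eqref{eq 3.1210}, closing with a contraction/fixed-point argument. Your route buys simplicity and robustness: you only need $c_j<1$ for each fixed $j$ (no uniform gap, no external matrix-analysis citation), and compactness replaces the delicate balancing of $\|Q^{-1}\|$ against the tail blocks. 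The paper's route buys quantitativeness: it effectively controls the norm of $M^{-1}$ in terms of $\alpha$, $R_0$, $s$, which the Fredholm alternative does not provide (though nothing later in the paper requires it beyond what \eqref{eq12.44} states). One small bookkeeping point: the inequality $c_j<1$ you invoke is what the \emph{proof} of Lemma \ref{lem2.3} establishes ($\sum_{l\ge1}|B_{l,j}|<1$); the statement of that lemma only records column diagonal dominance, which by itself is weaker, so you should cite the displayed conclusions of that proof rather than the lemma's wording.
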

\begin{proof}
Let $N\ge 1$ be a large integer to be chosen later. First we estimate the block of $(B)_{l,j}$, where $l\in [1,N]$, $j> N$:
\begin{equation*}
\sum_{l=1}^N\sum_{j> N}|B_{l,j}|\le \sum_{l=1}^{[N/2]+1}\sum_{2j> N}|B_{2l,2j}|+\sum_{l=1}^{[N/2]+1}\sum_{2j-1>N}|B_{2l-1,2j-1}|.
\end{equation*}
Using \eqref{eq 1.271}, the first summation on the right-hand side of the inequality above is bounded by
\begin{align}
\nonumber
%&\sum_{j=1}^{[N/2]+1}\sum_{l\ge [N/2]}|B_{2j,2l}|\le
&\sum_{2j>N}\sum_{k=1}^\infty \frac{2|\alpha|^k}{(kR_0)^{2j}} \sum_{l=1}^{[N/2]+1}\frac{{{2l+2j-1}\choose {2j-1}}}{(kR_0)^{2l}}\\
\nonumber
&< \sum_{2j>N}\sum_{k=1}^\infty\frac{2|\alpha|^k}
{(kR_0)^{2j}}\left(\sum_{l=0}^\infty
\frac{{{2l+2j-1}\choose {2j-1}}}{(kR_0)^{2l}}-1\right)\\
&=\sum_{2j>N}\sum_{k=1}^\infty|\alpha|^k
\left(\Big(\frac{1}{kR_0-1}\Big)^{2j}+\Big(\frac{1}{kR_0+1}\Big)^{2j}-2\Big(\frac{1}{kR_0}\Big)^{2j}\right)\nonumber\\
&\le CR_0^{-N-1},\label{eq 1.076}
\end{align}
where $C=C(\alpha)$.
 Similarly, by \eqref{eq 3.121} and \eqref{eq 4.32} we have
\begin{equation}
\sum_{l=1}^{[N/2]+1}\sum_{2j-1> N}|B_{2l-1,2j-1}|\le CR_0^{-N-1}.\label{eq 1.075}
\end{equation}
Therefore, combining \eqref{eq 1.076} and \eqref{eq 1.075} we obtain
\begin{equation}
\sum_{l=1}^N\sum_{j> N}|B_{l,j}|\le CR_0^{-N-1}.\label{eq 1.131}
\end{equation}

Next, we consider the $N$ dimensional matrix $Q:=\{M_{l,j}\}_{l,j=1}^N$. Since by Lemma \ref{lem2.3} $\{M_{l,j}\}_{l,j= 1}^\infty$ is diagonally dominant by column, $Q$ is diagonally dominant by column as well, which implies that $Q$ is invertible. We estimate $Q^{-1}$ as follows.
%Since $Q$ is diagonal dominated, it is well known that
%\begin{equation}
%|\det Q| \ge \prod_{i=1}^N(|Q_{ii}|-\sum_{j\neq i}|Q_{ji}|).
%\end{equation}
%Similar to the proof of \eqref{eq 1.081}, we estimate the right-hand side of the inequality above.
For odd number columns $ 2l-1\in[1, N]$, by \eqref{eq 4.32},
\begin{align*}
&|Q_{2l-1,2l-1}|-\sum_{\{j\,:\,2j-1\in[1,N], j\neq l\}}|Q_{2j-1,2l-1}|\ge |Q_{2l-1,2l-1}|-\sum_{j=1,j\neq l}^\infty|\tilde{M}_{2j-1,2l-1}|\\
&\ge 1-\sum_{k=1}^\infty\alpha^k \left(\Big(\frac{1}{kR_0-1}\Big)^{2l-1}
-\Big(\frac{1}{kR_0+1}\Big)^{2l-1}\right)
\ge C(\alpha,R_0)>0,
\end{align*}
where $C(\alpha,R_0)$ is a constant depending on $\alpha$ and $R_0$ but not on $N$.
For even number columns, by \eqref{eq 1.271} we obtain the same estimate,
\begin{equation*}
|Q_{2l,2l}|-\sum_{\{j\,:\,2j\in[2,N],j\neq l\}}|Q_{2j,2l}|\ge C(\alpha,R_0)>0.
\end{equation*}
Therefore, by \cite[Corollary 1]{Var75} we get
\begin{equation*}
\|Q^{-1}\|_{l_1}\le C(\alpha,R_0)^{-1},
\end{equation*}
which implies
\begin{equation}
\|Q^{-1}\|_{\mathfrak{l}^s}\le C(\alpha, R_0)^{-1}\sqrt{N}(1+N)^s,\label{eq 1.132}
\end{equation}
where $\|Q^{-1}\|_{\mathfrak{l}^s}$ is the operator norm of $Q^{-1}$ in
a finite dimensional subspace of $\mathfrak{l}^s$.
Indeed, for any $x=(x_1,x_2,\ldots,x_N)^T$, we compute
\begin{equation}
\|Q^{-1}x\|_{\mathfrak{l}^s}^2
=\sum_{l=1}^N\left(\sum_{j=1}^NQ^{-1}_{l,j}x_j\right)^2(1+l)^{2s}\le (1+N)^{2s}\sum_{l=1}^N\left(\sum_{j=1}^NQ_{l,j}^{-1}x_j\right)^2.\label{eq 3.125}
\end{equation}
Note that
\begin{equation*}
\sum_{l=1}^N\left(\sum_{j=1}^NQ^{-1}_{l,j}x_j\right)^2\le \left(\sum_{l=1}^N\Big|\sum_{j=1}^NQ_{l,j}^{-1}x_j\Big|\right)^2\le \left(\|Q^{-1}\|_{l_1}\sum_{j=1}^N|x_j|\right)^2.%\label{eq 3.126}
\end{equation*}
By the Cauchy-Schwarz inequality, we have
\begin{align}
\left(\|Q^{-1}\|_{l_1}\sum_{j=1}^N|x_j|\right)^2&\le \|Q^{-1}\|_{l_1}^2\left(\sum_{l=1}^N(1+l)^{-2s}\right)
\|x\|^2_{\mathfrak{l}^s}\nonumber\\
&\le N\|Q^{-1}\|^2_{l_1}\|x\|^2_{\mathfrak{l}^s}.\label{eq 3.127}
\end{align}
Therefore, combining \eqref{eq 3.125}-\eqref{eq 3.127},  \eqref{eq 1.132} is proved.

%Since $\tilde{M}$ is diagonal dominated by column, it is one-to-one. It remains to show that $\tilde{M}$ is onto.
Thanks to the estimates of the blocks, we are ready to prove the invertibility of $M$.
For any fixed $y\in \mathfrak{l}^s$, we need to find a unique $x\in \mathfrak{l}^s$ such that ${M}x=y$. Similar to $\mathfrak{l}^s$ we define the space
$$
\hat{\mathfrak{l}}^s=\Big\{a=(a_1,a_2,\cdots):\sum_{j=1}^\infty|a_j|^2(1+j)^{2s}<\infty \Big\}
$$
with the norm $\|a\|_{\hat{\mathfrak{l}}^s}=
\big(\sum_{j=1}^\infty|a_j|^2(1+j)^{2s}\big)^{1/2}.$ Notice that the summation index runs from $1$ instead of $0$ as in the definition of $\mathfrak{l}^s.$

By setting $x=({x}_0,\hat{x})^T$ and  $y=({y}_0,\hat{y})^T,$ where ${x}_0,{y}_0\in \bR$, $Mx=y$ is written as
\begin{equation*}
 \left(
 \begin{array}{cc}
 1& (B_{0,j})_{j\ge 1}\\
0&(M_{l,j})_{l,j\ge 1}
 \end{array}
 \right)
 \left(
 \begin{array}{c}
 {x}_0\\
 \hat{x}
 \end{array}
 \right)
 =\left(
 \begin{array}{c}
 {y}_0\\
 \hat{y}
 \end{array}
 \right).
\end{equation*}
Clearly, it is sufficient to prove that for any $\hat{y}\in \hat{\mathfrak{l}}^s$, there exists $\hat{x}\in \hat{\mathfrak{l}}^s$ such that $({M}_{l,j})_{l,j\ge 1}\hat{x}=\hat{y}.$
In fact, %the equation above can be written as
%\begin{align*}
%x_0+(B_{0,j})_{j\ge 1}\hat{x}=y_0\\
%(M_{l,j})_{l,j\ge 1}\hat{x}=\hat{y}.
%\end{align*}
if there exists $\hat{x}\in \hat{\mathfrak{l}}^s$ so that $({M}_{l,j})_{l,j\ge 1}\hat{x}=\hat{y}$, it remains to set $x_0=y_0-(B_{0,j})_{j\ge 1}\hat{x}.$
From \eqref{eq 3.1210}, $x_0$ is well defined.
Obviously, $({M}_{l,j})_{l,j\ge 1}\hat{x}=\hat{y}$
can be further written as
\begin{equation*}%\label{eq 1.152}
\left(
\begin{array}{cc}
Q& B_1\\
B_2& id+\hat{B}
\end{array}
\right)
\left(
\begin{array}{c}
x_1\\
x_2
\end{array}
\right)
=
\left(
\begin{array}{c}
y_1\\
y_2
\end{array}
\right),
\end{equation*}
where $\hat{x}=(x_1,x_2)^T, \hat{y}=(y_1,y_2)^T$, $x_1, y_1$ are $N$ dimensional vectors, $$
B_1=(B_{i,j})_{i\in [1,N], j> N},\quad
B_2=(B_{i,j})_{i> N, j\in [1,N]}, \quad
\hat{B}=(B_{i,j})_{i,j>N}.
$$
We rewrite the equation above as follows
\begin{equation*}
Qx_1+B_1x_2=y_1,\quad B_2x_1+x_2+\hat{B}x_2=y_2.
\end{equation*}
We take $Q^{-1}$ on the first equation to get
\begin{equation*}
x_1=-Q^{-1}B_1x_2+Q^{-1}y_1,\quad x_2=-B_2x_1-\hat{B}x_2+y_2.
\end{equation*}
It is sufficient to find a unique fixed point for the map on the right-hand side of the system above. We claim when $N$ is sufficiently large the map is a contraction, i.e., the operator norm of
\begin{equation*}
\left(
\begin{array}{cc}
0 & -Q^{-1}B_1\\
-B_2& -\hat{B}
\end{array}
\right)
\end{equation*}
is less than 1. Indeed, fix $\hat{x}=(x_1,x_2)^T\in \hat{\mathfrak{l}}^s$ with $\|\hat{x}\|_{\hat{\mathfrak{l}}^s}=1$, and we compute
\begin{align*}
&\|Q^{-1}B_1 x_2\|_{\hat{\mathfrak{l}}^s}^2+\|B_2 x_1+\hat{B}x_2\|_{\hat{\mathfrak{l}}^s}^2\\
&=\sum_{k=1}^N(1+k)^{2s}\left(\sum_{j=1}^NQ^{-1}_{k,j}
\Big(\sum_{l=N+1}^\infty B_{j,l}x_l\Big)\right)^2+\sum_{k> N}(1+k)^{2s}\left(\sum_{j=1}^\infty B_{k,j}x_j%+\sum_{j> N}{B}_{kj}x_j
\right)^2.
\end{align*}
By the definition of the $\mathfrak{l}^s$ norm, we have
\begin{align}\nonumber
&\sum_{k=1}^N(1+k)^{2s}\left(\sum_{j=1}^NQ^{-1}_{k,j}\Big(\sum_{l=N+1}^\infty B_{j,l}x_l\Big)\right)^2+\sum_{k> N}(1+k)^{2s}\Big(\sum_{j=1}^\infty B_{k,j}x_j\Big)^2\\\nonumber
&\le \|Q^{-1}\|^2_{\hat{\mathfrak{l}}^s}\left(\sum_{k=1}^N(1+k)^{2s}\Big(\sum_{l=N+1}^\infty B_{k,l}x_l\Big)^2\right)+\sum_{k>N}(1+k)^{2s}\Big(\sum_{j=1}^\infty |B_{k,j}|\Big)^2\\\label{eq 3.1211}
&\le \|Q^{-1}\|^2_{\hat{\mathfrak{l}}^s}\sum_{k=1}^N(1+k)^{2s}\sum_{l=N+1}^\infty B_{k,l}^2+\sum_{k>N}(1+k)^{2s}\Big(\sum_{j=1}^\infty |B_{k,j}|\Big)^2.
\end{align}

By \eqref{eq 1.131}, \eqref{eq 3.1210}, and the fact that $|B_{k,l}|<1$ for any $k, l\ge 1$, we know that
\begin{equation*}
\sum_{k=1}^N\sum_{l=N+1}^\infty B_{k,l}^2\le \sum_{k=1}^N\sum_{l=N+1}^\infty|B_{k,l}|\le CR_0^{-N-1},
\end{equation*}
and
\begin{equation*}
\sum_{j=1}^\infty |B_{k,j}|\le CR_0^{-k-1}.
\end{equation*}
Therefore, combining the two inequalities above with \eqref{eq 3.1211} and \eqref{eq 1.132}, we have
\begin{align*}
&\|Q^{-1}B_1x_2\|_{\hat{\mathfrak{l}}^s}^2+\|B_2x_1+\hat Bx_2\|_{\hat{\mathfrak{l}}^s}^2\\
&\le CN(1+N)^{4s}R_0^{-N-1}+C\sum_{k>N}(1+k)^{2s}R_0^{-2k-2}<1,
\end{align*}
provided that $N$ is sufficiently large only depending on $\alpha$, $R_0$, and $s$.  Hence $\tilde{M}$ and thus $M$ are one-to-one and onto, and $M$  is an invertible operator. Hence, we finish the proof.
\end{proof}

\begin{proof}[Proof of Proposition \ref{prop 1.291}]
From Lemmas \ref{lem 2.5} and \ref{lem 2.6}, $M$ induces a bounded and invertible map $T$ on $H^s_{\text{sym}}(\{|x|=R_0\})$.
For any $f\in H^s_{\text{sym}}(\{|x|=R_0\})$, let $T^{-1}(f)=g$ and suppose that $g=\sum_{j=0}^\infty a_je_j$. Since $T$ is bounded, we have
\begin{equation}\label{eq 4.33}
f=T\Big(\sum_{j=0}^\infty a_je_j\Big)=\sum_{j=0}^\infty a_jT(e_j).
\end{equation}
Therefore, $\{u_j=T(e_j)\}$ is a Schauder basis in $H^s_{\text{sym}}(\{|x|=R_0\})$. By the Parseval identity,
$$
\sum_{j=0}^\infty a_j^2 (1+j)^{2s}= C(s,R_0)\|g\|_{H^s(\{|x|=R_0\})}^2\le C(s,R_0)\|f\|_{H^s(\{|x|=R_0\})}^2,
$$
which gives \eqref{eq12.44}. The proposition is proved.
\end{proof}
%Since for each $j$, $u_j$ is smooth, then $u_j|_{\{|x|=R_0\}}\in H^s_{sym}(\{|x|=R_0\})$ for any $s>0$. Following the proof of Proposition \ref{prop 1.291}

\begin{remark}\label{rk 3.131}
In the previous proposition, we only consider functions even in $x_1$. For  functions odd in $x_1$, the same result can be proved and we only provide a sketch here. Define $\tilde{\Psi}_j$ as follows:
\begin{align*}
&\tilde{\Psi}_j(z)=\frac{2}{a_0+1}\sum_{k=1}^\infty(-\alpha)^k\frac{z^j}{(kz+\ri)^j}\quad \text{in}\,\, \{|z-\ri|<1\},\\
&\tilde{\Psi}_j(z)=(-1)^{(j+1)/2}\ri z^j+
\sum_{k=1}^\infty(-\alpha)^k\Big[\frac{z^j}{(kz+\ri)^j}-\frac{z^j}{(kz-\ri)^j}\Big]\\
&\quad \text{in}\,\, \{z:|z+\ri|>1\quad \text{and}\quad |z-\ri|>1\},\\
&\tilde{\Psi}_j(z)=-\frac{2}{a_0+1}\sum_{k=0}^\infty(-\alpha)^k\frac{z^j}{(kz-\ri)^j}\quad \text{in}\,\, \{|z+\ri|<1\}
\end{align*}
\underline{for $j$ odd}, and
\begin{align*}
&\tilde{\Psi}_j(z)=\frac{2}{a_0+1}\sum_{k=1}^\infty\alpha^k\frac{z^j}{(kz+\ri)^j}\quad \text{in}\,\, \{|z-\ri|<1\},\\
&\tilde{\Psi}_j(z)=(-1)^{j/2} z^j+\sum_{k=1}^\infty\alpha^k
\Big[\frac{z^j}{(kz+\ri)^j}+\frac{z^j}{(kz-\ri)^j}\Big]\\
&\quad \text{in}\,\, \{z:|z+\ri|>1\quad \text{and}\quad |z-\ri|>1\},\\
&\tilde{\Psi}_j(z)=\frac{2}{a_0+1}\sum_{k=0}^\infty\alpha^k\frac{z^j}{(kz-\ri)^j}\quad \text{in}\,\, \{|z+\ri|<1\}
\end{align*}
\underline{for $j$ even}.
Let $v(x)=R_0^{-j}\Im\tilde{\Psi}_j(z)$ and following the same argument in Proposition \ref{prop 1.291}, we can show that $\{v_j\}$ solves the equation \eqref{eq 2.221} and forms a Schauder basis of $H^s$ functions odd in $x_1$ on the circle $\{|x|=R_0\}$ provided that $R_0>2$.
\end{remark}

We finish this section by  stating \cite[Proposition 8.4]{LV00}.
\begin{proposition}\label{prop 1.152}
Given any multi-index $m$ there exists a constant $C_m$, independent of $j$ and $R_0$, so that the functions $u_j$ satisfy
\begin{equation*}
|D^mu_j(x_1,x_2)|\le C_mR_0^{-j}(j+|m|)^{|m|}
\end{equation*}
in each of the three regions $\overline{B_1}\cap \fB_1$, $\overline{B_1}\cap \fB_2$, and $\overline{B_1}\cap \fB_0$.
%\begin{align*}
%&\Big\{(x_1,x_2): |(x_1,x_2)|\le1, |(x_1,x_2)-(0,-1)|\ge 1, |(x_1,x_2)-(0,1)|\ge 1\Big\},\\
%&\Big\{(x_1,x_2):|(x_1,x_2)|\le 1,|(x_1,x_2)-(0,-1)|\le 1\Big\},\\&\text{and}
%\quad\Big\{(x_1,x_2):|(x_1,x_2)|\le 1, |(x_1,x_2)-(0,1)|\le 1\Big\}.
%\end{align*}
\end{proposition}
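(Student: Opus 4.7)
The plan is to derive the estimate directly from the explicit series representation of $\Psi_j$ in each of the three regions, though the most efficient route is simply to invoke \cite[Proposition 8.4]{LV00}. Since $\Psi_j$ is holomorphic on each region and $u_j = R_0^{-j}\Re\Psi_j(z)$ with $z = x_1+\ri x_2$, Euclidean derivatives of total order $|m|$ reduce to $R_0^{-j}$ times complex derivatives of order $|m|$; it therefore suffices to bound $|\Psi_j^{(|m|)}(z)|\le C_{|m|}(j+|m|)^{|m|}$ uniformly in $z$ (and independently of $R_0$) in each closed region.

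I would start with the region $\overline{B_1}\cap\fB_1$, where every summand of $\Psi_j$ takes the form $w_k(z)^j$ with $w_k(z)=z/(kz+\ri)$. The pointwise bound $|w_k(z)|\le 1$ is immediate: if $z = x+\ri y$ with $|z-\ri|\le 1$ then $y\ge 0$, so $|kz+\ri|^2 = k^2|z|^2+2ky+1\ge 1$ while $|z|\le 1$. Rewriting $w_k = \tfrac{1}{k}-\tfrac{\ri}{k}(kz+\ri)^{-1}$ for $k\ge 1$ yields $|w_k^{(r)}|\le r!\,k^{r-1}$ on the region. Applying Fa\`a di Bruno to $w_k^j$ and summing over set partitions $P$ of $\{1,\ldots,|m|\}$ with $|P|\le j$ gives
\[
|D^m(w_k^j)|\le \sum_P c_P\,j^{|P|}\,k^{|m|-|P|}\prod_{B\in P}|B|!.
\]
Since $\sum_{k\ge 1}|\alpha|^k k^r<\infty$ for every $r$, summing in $k$ against $|\alpha|^k$ produces the bound $C_{|m|}\sum_P j^{|P|}\le C_{|m|}\,B_{|m|}\,j^{|m|}\le C_{|m|}(j+|m|)^{|m|}$, where $B_{|m|}$ denotes the $|m|$th Bell number. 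The $k=0$ summand contributes only $z^j$ up to a unit factor, whose derivatives are bounded by $j^{|m|}$ on $|z|\le 1$.

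I would treat $\overline{B_1}\cap\fB_0$ next. The leading term $\pm\ri z^j$ or $\pm z^j$ obeys the trivial estimate $|D^m z^j|\le j^{|m|}$ on $|z|\le 1$. For the remaining terms $w_k^\pm(z)^j$ with $w_k^\pm(z)=z/(kz\pm\ri)$, the lower bound $|kz\pm\ri|\ge 1$ comes from the geometric fact that the singular points $\mp\ri/k$ lie in $\overline{\fB_2}$ and $\overline{\fB_1}$ respectively, both of which sit at distance exactly $1/k$ from $\overline{\fB_0}$. The same Fa\`a di Bruno bookkeeping then applies verbatim. The region $\overline{B_1}\cap\fB_2$ reduces to the $\fB_1$ case via the reflection $x_2\mapsto -x_2$, which exchanges the roles of $kz+\ri$ and $kz-\ri$ and of the parity-dependent prefactors.

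The main obstacle I anticipate is the Fa\`a di Bruno combinatorics: one must ensure that the polynomial growth in $k$ coming from $|w_k^{(r)}|\le r!\,k^{r-1}$ is absorbed by the geometric decay $|\alpha|^k$, while simultaneously the powers of $j$ collapse to the sharp exponent $|m|$ rather than a weaker bound like $j^{|m|}\,|m|!$ that would deteriorate with $|m|$. Verifying the uniform lower bound $|kz\pm\ri|\ge 1$ in the unbounded region $\fB_0$, although geometric and elementary, is the ingredient that makes the whole strategy work and must be checked case by case.
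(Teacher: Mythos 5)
Your proposal is correct, but it takes a different route from the paper: the paper does not prove this proposition at all, it simply quotes \cite[Proposition 8.4]{LV00} (this is why the statement appears under the phrase ``We finish this section by stating\dots''), exactly the shortcut you mention in your first sentence. What you supply instead is the self-contained verification that the citation hides: reduce $D^m\Re\Psi_j$ to $\Psi_j^{(|m|)}$ via the Cauchy--Riemann equations (so the only $R_0$-dependence is the prefactor $R_0^{-j}$), bound each summand $w_k(z)^j$ with $w_k(z)=z/(kz\pm\ri)$ by combining $|z|\le 1$ on $\overline{B_1}$ with the lower bound $|kz\pm\ri|\ge 1$, and control high derivatives through the decomposition $w_k=\tfrac1k\mp\tfrac{\ri}{k}(kz\pm\ri)^{-1}$ plus Fa\`a di Bruno, absorbing the $k^{|m|-|P|}$ growth into $\sum_k|\alpha|^k k^{|m|}<\infty$. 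Your geometric justification of $|kz\pm\ri|\ge 1$ is sound in all three regions: in $\fB_1$ it follows from $y\ge 0$ in $|kz+\ri|^2=k^2|z|^2+2ky+1$, in $\fB_0$ from the fact that the poles $\mp\ri/k$ lie in $\overline{\fB_2}$, $\overline{\fB_1}$ at distance $1/k$ from the boundary circles, and $\fB_2$ follows by reflection. Two harmless points to tidy up: the step $\sum_P j^{|P|}\le B_{|m|}j^{|m|}$ needs $j\ge 1$, so treat $u_0$ (a constant in each region) separately; and your constant depends on $\alpha$, i.e.\ on $a_0$, which is permitted since the proposition only requires independence of $j$ and $R_0$. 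The payoff of your approach is that the same bookkeeping transfers verbatim to the non-symmetric series of Section 4 (the analogue stated there as ``by straightforward calculations''), whereas the paper's citation covers only the symmetric case $a_0=b_0$, $r_1=r_2=1$ treated in \cite{LV00}.
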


\section{Proof of Theorem \ref{thm 2.161}}
%\begin{theorem}\label{thm 1.15}
%Let $R_0>2$ and $g\in H^{1/2}(\partial B_{R_0})$. Suppose $u\in H^1(B_{R_0})$ to be the solution to
%\begin{equation*}
%D_i(aD_i u)=0\quad \text{in}\quad B_{R_0},\quad u=g\quad \text{on}\quad \partial B_{R_0}.
%\end{equation*}
%Then
%\begin{align*}
%u\in C^\infty(K\cap \{x: |x-(0,1)|\ge 1\}\cap \{x:|x-(0,-1)|\ge 1\}),\\
%u\in C^\infty(\{x:|x-(0,1)|\le 1\}),\quad \text{and}\quad u\in C^\infty(\{x:|x-(0,-1)|\le 1\})
%\end{align*}
%for some compact set $K\subset B_{R_0}$.
%\end{theorem}
Now we are ready to prove our first main theorem.
\begin{proof}[Proof of Theorem \ref{thm 2.161}]
As explained in Remark \ref{rk 3.131}, we only need to consider $g$ even in $x_1$.
Without loss of generality, we may assume that $g$ is smooth on $\partial B_{R_0}=\{|x|=R_0\}$. If not, we may simply choose $2<R_0^\prime<R_0$  such that $K\subset B_{R_0^\prime}$. By the elliptic regularity, $u|_{\{|x|=R_0^\prime\}}$ is smooth. Therefore, we can replace $R_0$ by $R_0^\prime$.  From Proposition \ref{prop 1.291} we have
%{\color{red}where $g_j=(M^{-1}f)_j$,}
\begin{equation}\label{eq 4.35}
g=\sum_{j=0}^\infty g_ju_j,\quad \sum_{j=0}^\infty |g_j|^2(j+1)^{2s}<\infty
\end{equation}
for some $s>3/2$.
It is easily seen that
$$
\Big|\frac{d}{dz}\Psi_j\Big|\le C(j+1)
$$
in each of the three subdomains $\{z:|z+\ri|< 1\}$, $\{z:|z-\ri|< 1\}$, and $\{z: |z|\le R_0, |z+\ri|> 1, |z-\ri|> 1\}$, with constant $C$ depending on $R_0$. Therefore,
\begin{equation}\label{eq 4.34}
\|u_j\|_{H^1(B_{R_0})}\le C(j+1).
\end{equation}
We consider $U_k=\sum_{j=0}^kg_ju_j$. Let $h_k$ be the trace of $U_k$ on $\{|x|=R_0\}$, i.e., $h_k=U_k|_{\{|x|=R_0\}}$. By Proposition \ref{prop 1.291},  in particular \eqref{eq 4.33},  we have
$h_k\rightarrow g$ in $H^{1/2}(\{|x|=R_0\})$.
%{\color{red}In fact, thanks to formula of $\{u_j\}$ (i.e.,\eqref{eq 3.1311} and \eqref{eq 3.1312}) and the fact that  $M=id+B$, we compute
%\begin{align*}
%&\|h_k-g\|_{H^{1/2}}=\sum_{j>k}\|g_ju_j\|_{H^{1/2}}
%=\sum_{j>k}\Big\|g_j\sum_lM_{lj}e_l\Big\|_{H^{1/2}}\\
%&=\sum_{j>k}\Big\|g_je_j+g_j\sum_lB_{lj}e_l\Big\|_{H^{1/2}}\le \sum_{j>k}\|g_je_j\|_{H^{1/2}}+\sum_{j>k}\Big\|\sum_lg_j
%B_{lj}e_l\Big\|_{H^{1/2}}\\
%&\le \sum_{i>k}(1+j)g_j^2+\sum_{j>k}\sum_l(1+l)|g_jB_{lj}|^2({\color{red}??}).
%\end{align*}
%Obviously, the first term on the right-hand side of the inequality above goes to zero as $k$ goes to infinity. For the second term, we have
%\begin{equation}\label{eq 3.1315}
%\sum_{j>k}\sum_l(1+l)|g_jB_{lj}|^2
%=\Big(\sup_{j>k}g_j^2\Big)\Big(\sum_l(1+l)\sum_{j>k}B_{lj}^2\Big).
%\end{equation}
%It is easy to see that $\sup_{j>k}|g_j|\rightarrow 0$ as $k\rightarrow \infty$.
%From \eqref{eq 3.1210}, we have $\sum_l(1+l)\sum_{i>1}B^2_{{li}}<\infty$.
%Hence, right-hand side of \eqref{eq 3.1315} converges to zero {as $k\to \infty$}.
%Therefore, we prove that $h_k\rightarrow g$ in $H^{1/2}$.}

Moreover, by \eqref{eq 4.34} and \eqref{eq 4.35}, we get
\begin{align*}
&\sup_k\|U_k\|_{H^1(B_{R_0})}\le C\sum_{j=0}^\infty|g_j|(j+1)\\
&\le C\Big(\sum_{j=0}^\infty |g_j|^2(j+1)^{2s}\Big)^{1/2}
\Big(\sum_{j=0}^\infty(j+1)^{2-2s}\Big)^{1/2}<\infty.
\end{align*}
From the construction of $u_j$, $U_k$ is the solution of the following equation
\begin{equation*}
D_j(aD_jU_k)=0\quad\text{in}\,\, B_{R_0}, \quad  U_k=h_k\quad \text{on}\quad \partial B_{R_0}.
\end{equation*}
Since $h_k\rightarrow g$ in $H^{1/2}(\{|x|=R_0\})$, we have that $U_k\rightarrow u$ in $H^1(B_{R_0})$. Furthermore, for any multi-index $m$, by the interior elliptic estimates, we have the pointwise convergence
\begin{equation}
D^m U_k(x)\rightarrow D^m u(x)\label{eq 1.157}
\end{equation}
with $x\in B_{R_0}$ but not on $\{|x-(0,1)|=1\}$ and $\{|x-(0,-1)|=1\}$.
By Proposition \ref{prop 1.152}, we get
\begin{equation*}
|D^mu_j(x_1,x_2)|\le C_mR_0^{-j}(j+|m|)^{|m|}
\end{equation*}
in each of the three regions: $\overline{B_1}\cap \fB_1$, $\overline{B_1}\cap \fB_2$, and $\overline{B_1}\cap\fB_0$.
%\begin{align*}
%&\Big\{(x_1,x_2): |(x_1,x_2)|\le1, |(x_1,x_2)-(0,-1)|\ge 1, |(x_1,x_2)-(0,1)|\ge 1\Big\},\\
%&\Big\{(x_1,x_2):|(x_1,x_2)|\le 1,|(x_1,x_2)-(0,-1)|\le 1\Big\}, \quad \text{and}\\
%&\Big\{(x_1,x_2):|(x_1,x_2)|\le 1, |(x_1,x_2)-(0,1)|\le 1\Big\}.
%\end{align*}
Therefore, by the Cauchy-Schwarz inequality,
\begin{align}\nonumber
&|D^m U_k(x)|\le \sum_{j=0}^k |g_j||D^m u_j(x)|\le C_m\sum_{j=0}^k|g_j|R_0^{-j}(j+|m|)^m\\\label{eq 1.156}
&\le C_m \Big(\sum_{j=0}^\infty |g_j|^2\Big)^{1/2}
\Big(\sum_{j=0}^\infty R_0^{-2j}(j+|m|)^{2|m|}\Big)^{1/2}\le C_m
\end{align}
for each multi-index $m$ in each of the three regions above. From \eqref{eq 1.157} and \eqref{eq 1.156}, it follows immediately $u$ has the desired smoothness in $\{|x|\le 1\}$.  In particular, $D^m u(x)$ has the same limit at the origin, whether we approach through the left cusp or through the right cusp. For $x\in K$ but outside $\{|x|\le 1\}$, the piecewise smoothness of $u$ follows from the classical elliptic regularity results; see, for instance, see \cite[Proposition 1.4]{LN03}. The theorem is proved.
\end{proof}

\section{Non-homogeneous equations with non-symmetric coefficients}

In this section, we consider non-homogeneous equations with non-symmetric coefficients
\begin{equation}
D_i(a(x)D_i u)=D_if_i,\label{eq 1.161}
\end{equation}
where $a(x)$ is equal to $a_0$ in $B_{r_1}(0,r_1)$, $b_0$ in $B_{r_2}(0,-r_2)$, and $1$ in $\bR^2\setminus(B_{r_1}(0,r_1)\cup B_{r_2}(0,-r_2))$,
%\begin{align*}
%&a(x)=\left\{
%        \begin{array}{ll}
%          a_0 & \hbox{in}\quad B_{r_1}(0,r_1); \\
%          b_0 & \hbox{in} \quad B_{r_2}(0,-r_2);\\
%          1 & \hbox{in}\quad \bR^2\setminus(B_{r_1}(0,r_1)\cup B_{r_2}(0,-r_2)),
%        \end{array}
%      \right.
%\end{align*}
and $a_0,b_0>0$.
The proof is divided into three steps. We shall first consider homogeneous equations with $r_1=r_2=1$ in Section \ref{sec4.1}, and then non-homogeneous equations with $r_1=r_2=1$ in Section \ref{sec4.2}, and finally the general case in Section \ref{sec4.3}.

\subsection{Homogeneous equations}
                                \label{sec4.1}
In this case, we basically adapt the proofs in Li and Vogelius \cite{LV00},  where they considered the special case $a_0=b_0>0$.

Recall that we use $\fB_1$ and $\fB_2$ to denote $B_1(0,1)$ and $B_1(0,-1)$, respectively, and $\fB_0:=\bR^2\setminus(\overline{\fB_1\cup \fB_2})$. Let $\mathcal{D}$ be an open bounded subset of $\bR^2$.
%such that $\overline{B_1\cup B_2}\subset \mathcal{D}$.
 The conformal mapping $z\rightarrow \ri/z$ maps $\fB_1$ to $\{\Re z>\.5\}$, $\fB_2$ to $\{\Re z<-\.5\}$, and $\fB_0$ to $\{\Re z\in (-\.5,\.5)\}$. This leads us to study the following homogeneous equation:
\begin{equation}\label{eq 3.141}
\tilde{L}u:=D_i(A(x)D_i u)=0,
\end{equation}
where
\begin{equation*}
A(x)=a_0\chi_{\{x_1>\.5\}}+\chi_{\{x_1\in(-\.5,\.5)\}}+b_0\chi_{\{x_1<-\.5\}}
\end{equation*}
Let
$$\alpha=\frac{a_0-1}{a_0+1},\quad\beta=\frac{b_0-1}{b_0+1}.$$
Choose a holomorphic function $\phi:\bC\setminus (0,0)\rightarrow \bC$ satisfying
\begin{equation}
                            \label{eq12.57}
\phi(\bar z)=\overline{\phi(z)}
\end{equation}
and
\begin{equation}
|\phi(z)|\le C\gamma^{|\Re z|},\quad \text{when}\quad |\Re z|>\.5,
\label{eq 3.171}
\end{equation}
where $0<\gamma<|\alpha\beta|^{-1}$ and $C$ is a constant.
We define $\Phi$ as follows:
\begin{align*}
&\Phi(z)=(1-\alpha)\sum_{k=0}^\infty(\alpha\beta)^k\Big[\phi(z+{2k})
-\beta\phi(-z-(2k+1))\Big]\quad \text{in}\,\, \Big\{\Re z>\.5\Big\},\\
&\Phi(z)=\phi(z)+\sum_{k=1}^\infty\Big[(\alpha\beta)^k
\big(\phi(z+{2k})+\phi(z-{2k})\big)\\
&\quad -(\alpha\beta)^{k-1}\big(\alpha\phi(-z+{2k-1})+\beta\phi(-z-({2k-1}))\big)\Big]\quad \text{in}\,\, \Big\{\Re z\in(-\.5,\.5)\Big\},\\
&\Phi(z)=(1-\beta)\sum_{k=0}^\infty(\alpha\beta)^k\Big[\phi(z-{2k})
-\alpha\phi(-z+{2k+1})\Big]\quad \text{in}\,\, \Big\{\Re z<-\.5\Big\}.
\end{align*}
Similar to \cite[Proposition 8.2]{LV00}, we have the following proposition.
\begin{proposition}\label{prop 3.142}
The function $u(x_1,x_2)=\Re \Phi(\ri/z)$ satisfies
\begin{equation}
D_i(a(x)D_iu)=0\quad \text{in}\,\, \bR^2.\label{eq 3.143}
\end{equation}
Moreover, $u$ is even in $x_1$.
\end{proposition}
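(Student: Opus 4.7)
The plan is to exploit the conformal invariance of the 2D problem. Since $w = \ri/z$ maps $\fB_1, \fB_2, \fB_0$ conformally onto the strip regions $\{\Re w > 1/2\}$, $\{\Re w < -1/2\}$, $\{|\Re w| < 1/2\}$ respectively, and since conformal maps preserve Laplace's equation in each region as well as the transmission conditions $[u] = 0$ and $[a \partial_n u] = 0$ at each interface (the conformal factor $|f'|$ is continuous across the image interface), it suffices to prove the analogous statement in the strip picture: with $v(x) := \Re \Phi(z)$, $z = x_1 + \ri x_2$, the function $v$ is a weak solution of $\tilde L v = 0$ on $\bR^2$.

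First I would verify convergence and holomorphy. The growth bound \eqref{eq 3.171} together with the hypothesis on $\gamma$ ensures that each of the three series defining $\Phi$ converges locally uniformly, together with its term-by-term derivatives of all orders, on its respective open region. Hence $\Phi$ is holomorphic in each of the three open regions, so $v = \Re \Phi$ is harmonic there and $\tilde L v = A \Delta v = 0$ away from the interfaces $\{\Re z = \pm 1/2\}$.

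The heart of the argument is checking the two transmission identities at $\{\Re z = 1/2\}$:
\begin{equation*}
\Re \Phi_M(z) = \Re \Phi_R(z), \qquad \Re \Phi_M'(z) = a_0\, \Re \Phi_R'(z) \quad\text{for } \Re z = 1/2,
\end{equation*}
where $\Phi_M, \Phi_R$ denote the formulas in the strip and right half-plane. The key observation is that for $z = 1/2 + \ri y$,
\begin{equation*}
-z + (2k-1) = \overline{z + (2k-2)}, \qquad -z - (2k+1) = \overline{z - (2k+2)},
\end{equation*}
so by \eqref{eq12.57},
\begin{equation*}
\Re \phi(-z + (2k-1)) = \Re \phi(z + (2k-2)), \qquad \Re \phi(-z - (2k+1)) = \Re \phi(z - (2k+2)).
\end{equation*}
This reduces every term on either side of the proposed identities to a translate $\Re \phi(z + 2m)$ for some $m \in \bZ$, and I would then compare coefficients of each such translate. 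The continuity identity follows from a direct algebraic check; the flux identity relies on the definitional relation $a_0(1-\alpha) = 1+\alpha$. The matching at $\{\Re z = -1/2\}$ is handled by an analogous calculation using $b_0(1-\beta) = 1+\beta$.

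Combining harmonicity in each open region with the two transmission identities yields $\tilde L v = 0$ weakly on $\bR^2$. Pulling back through $w = \ri/z$ then gives $L u = 0$ on $\bR^2 \setminus \{0\}$, and the singularity at $z = 0$ is removable since $u$ lies in $H^1_{\mathrm{loc}}$ across an isolated point. For the evenness in $x_1$: the map $x_1 \mapsto -x_1$ sends $z$ to $-\bar z$ and hence $w = \ri/z$ to $\bar w$. Each of the three strip regions is symmetric under $w \mapsto \bar w$, and since $\alpha, \beta, 1-\alpha, 1-\beta$ are real, applying \eqref{eq12.57} term by term gives $\Phi(\bar w) = \overline{\Phi(w)}$ in each region; taking real parts yields $u(-x_1, x_2) = u(x_1, x_2)$.

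The main obstacle is the combinatorial bookkeeping at the interface: the doubly-indexed reflection series in $\Phi_M$ and $\Phi_R$ must be carefully re-indexed after the above identification so that the coefficient of each $\Re \phi(z + 2m)$ can be compared cleanly. This step is mechanical but error-prone, and it is where all the specific numerical factors $(1-\alpha), -\alpha, -\beta$ in the definition of $\Phi$ are seen to be forced.
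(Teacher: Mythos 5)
Your proposal is correct and follows essentially the same route as the paper: reduce via the conformal map $z\mapsto \ri/z$ to the strip configuration, use harmonicity of $\Re\Phi$ in each of the three strips, and verify the two transmission conditions on $\{\Re z=\pm\frac12\}$ by exploiting $\bar z=1-z$ on those lines together with \eqref{eq12.57} and the relations $a_0(1-\alpha)=1+\alpha$, $b_0(1-\beta)=1+\beta$. The only cosmetic difference is that you match the normal derivative $\Re\Phi'$ directly, whereas the paper invokes the Cauchy--Riemann equations and checks continuity of $A\,\Im\Phi$; the coefficient bookkeeping is the same.
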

\begin{proof}
The symmetry of $u$ in $x_1$ follows from \eqref{eq12.57}. By the property of the conformal mapping $z\rightarrow \ri/z$, it suffices to verify that $\Re\Phi(z)$ satisfies \eqref{eq 3.141}.
It is obvious that $\Re \Phi(z)$ is harmonic in each of the three strips $\{x_1<\.5\}$, $\{x_1\in (-\.5,\.5)\}$, and $\{x_1>\.5\}$. It remains to check the compatibility condition. Namely, $\Re \Phi(z)$ and $A(x)D_1\Re\Phi(z)$
are continues across the lines  $\{x_1=\.5\}$ and $\{x_1=-\.5\}$. Because $\Phi(z)$ is holomorphic, by the Cauchy-Riemann equation, it suffices to verify that $\Re \Phi(z)$ and $A(x)D_2\Im\Phi(z)$
are continues, which is equivalent to the continuities of
$\Re \Phi(z)$ and $A(x)\Im\Phi(z)$.
We only present the calculation associated with the continuities across the line $\{x_1=\.5\}$. The verification for the case $x_1=-\.5$ follows the same.

On one hand, we first compute
\begin{align*}
&2\Re \Phi(z)|_{x_1=\.5^-}\\
&=\phi(z)+\phi(\bar{z})+\sum_{k=1}^\infty\Big[(\alpha\beta)^k
\big(\phi(z+{2k})+\phi(\bar{z}+2k)
+\phi(z-{2k})\\
&\quad+\phi(\bar{z}-2k)\big)
-(\alpha\beta)^{k-1}\big(\alpha\phi(-z+{2k-1})+\alpha\phi(-\bar{z}+2k-1)\\
&\quad+\beta\phi(-z-{2k+1})+\beta\phi(-\bar{z}-2k+1)\big)\Big].
\end{align*}
Since $\bar{z}+z=1$, the right-hand side of the equality is equal to
\begin{align*}
&(1-\alpha)\sum_{k=0}^\infty(\alpha\beta)^k
\Big[\big(\phi(z+{2k})+\phi(\bar{z}+2k)\big)\\
&\quad -\beta\big(\phi(-z-(2k+1))
+\phi(-\bar{z}-(2k+1))\big)\Big],
\end{align*}
which is exactly equal to $2\Re\Phi(z)|_{x_1=\.5^+}$. Therefore,  the continuity of $\Re \Phi(z)$ across the line $\{x_1=\.5\}$ is proved.
It remains to check that $A(x)\Im\Phi$ is continuous and we do so by calculating
\begin{align*}
&2\ri A(x)\Im\Phi(z)|_{x_1=\.5^-}\\
&=\phi(z)-\phi(\bar{z})+\sum_{k=1}^\infty\Big[(\alpha\beta)^k
\big(\phi(z+{2k})-\phi(\bar{z}+2k)+\phi(z-{2k})-\phi(\bar{z}-2k)\big)\\
&\quad-(\alpha\beta)^{k-1}\big(\alpha\phi(-z+{2k-1})-\alpha\phi(-\bar{z}+2k-1)\\
&\quad+\beta\phi(-z-{2k+1})-\beta\phi(-\bar{z}-2k+1)\big)\Big]\\
&=(1+\alpha)\sum_{k=0}^\infty(\alpha\beta)^k
\Big[\phi(z+{2k})-\phi(\bar{z}+2k)\\
&\quad-\beta\phi(-z-(2k+1))
+\beta \phi(-\bar{z}-(2k+1))\Big]\\
&=A(x)\Im\Phi(z)|_{x_1=\.5^+}
\end{align*}
because $a_0(1-\alpha)=(1+\alpha)$.
This completes the proof of the proposition.
\end{proof}
When $\phi_j(z)=1/z^j$ for $j\ge 0$, which is holomorphic in $\bC\setminus(0,0)$ and satisfies \eqref{eq 3.171},  from Proposition \ref{prop 3.142}, $u_j:=R_0^{-j}\Re \Psi_j(z)$ is a solution to \eqref{eq 3.143} for each $j$ with
\begin{align*}
\Psi_j(z)&=(1-\alpha)\sum_{k=0}^\infty(\alpha\beta)^k
\Big[\frac{z^j}{(\ri+2kz)^j}+ \beta\frac{z^j}{(\ri+(2k+1)z)^j}\Big]\quad \text{in}\,\, \fB_1,\\
\Psi_j(z)&=(-1)^{(j+1)/2}\ri z^j+\sum_{k=1}^\infty(\alpha\beta)^k
\Big[\frac{z^j}{(\ri+2kz)^j}+\frac{z^j}{(\ri-2kz)^j}\\
&\quad -\alpha\frac{z^j}{(-\ri+(2k+1)z)^j}+\beta\frac{z^j}{(\ri+(2k+1)z)^j}\Big]\quad \text{in}\,\, \fB_0,\\
\Psi_j(z)&=(1-\beta)\sum_{k=0}^\infty(\alpha\beta)^k\Big[\frac{z^j}{(\ri-2kz)^j}
-\frac{z^j}{(-\ri+(2k+1)z)^j}\Big]\quad \text{in}\,\,\fB_2
\end{align*}
\underline{for $j$ odd}, and
\begin{align*}
\Psi_j(z)&=(1-\alpha)\sum_{k=0}^\infty(\alpha\beta)^k\Big[\frac{z^j}{(\ri+2kz)^j}
-\beta\frac{z^j}{(\ri+(2k+1)z)^j}\Big]\quad \text{in}\,\, \fB_1,\\
\Psi_j(z)&=(-1)^{j/2}z^j+\sum_{k=1}^\infty(\alpha\beta)^k
\Big[\frac{z^j}{(\ri+2kz)^j}+\frac{z^j}{(\ri-2kz)^j}\\
&\quad -\alpha\frac{z^j}{(-\ri+(2k+1)z)^j}-\beta\frac{z^j}{(\ri+(2k+1)z)^j}\Big]\quad \text{in}\,\, \fB_0,\\
\Psi_j(z)&=(1-\beta)\sum_{k=0}^\infty(\alpha\beta)^k\Big[\frac{z^j}{(\ri-2kz)^j}
-\alpha\frac{z^j}{(-\ri+(2k+1)z)^j}\Big]\quad \text{in}\,\, \fB_2
\end{align*}
\underline{for $j$ even}.
By straightforward calculations, we have the following propositions similar to \cite[Propositions 8.3 and 8.4]{LV00}.
\begin{proposition}
Given any integer $m$, there exists a constant $C_m$, independent of $j$, so that the functions $\Psi_j$ satisfy
$$
\Big|\Big(\frac{d}{dz}\Big)^m\Psi_j(z)\Big|\le C_m(j+m)^m
$$
in each of the three regions
$$
\Big\{z:|z|\le 1, |z+\ri|> 1, |z-\ri|> 1\Big\},
$$
$$
\Big\{z:|z|\le 1, |z-\ri|< 1\Big\},\quad \text{and}\quad
\Big\{z:|z|\le 1,|z+\ri|< 1\Big\}.$$
\end{proposition}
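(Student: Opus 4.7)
The plan is to differentiate the explicit series defining $\Psi_j$ term by term and bound each summand by combining a sharp pointwise decay estimate on the elementary building blocks $w(z):=z/(\pm\ri+qz)$ with Faà di Bruno's formula. I focus on $\overline{B_1}\cap\overline{\fB_1}$; the region $\overline{B_1}\cap\overline{\fB_2}$ is handled by the symmetry $\alpha\leftrightarrow\beta$, $\ri\leftrightarrow-\ri$, and the formula in $\overline{B_1}\cap\overline{\fB_0}$ contains an additional term proportional to $z^j$ whose $m$-th derivative is trivially bounded by $j(j-1)\cdots(j-m+1)\le(j+m)^m$ on $|z|\le 1$.

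\textbf{Step 1} (pointwise bound). For every integer $q\ge 0$ I will establish
\begin{equation*}
\left|\frac{z}{\ri+qz}\right|\le\frac{1}{\sqrt{q+1}},\qquad z\in\overline{B_1}\cap\overline{\fB_1}.
\end{equation*}
The condition $|z-\ri|\le 1$ rewrites as $|z|^2\le 2\Im z$, while $|\ri+qz|^2=q^2(\Re z)^2+(1+q\Im z)^2\ge q^2|z|^2+2q\Im z+1\ge q(q+1)|z|^2+1$; together these give $|z|^2/|\ri+qz|^2\le 1/(q+1)$.

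\textbf{Step 2} (derivative bound and summation). Setting $w(z):=z/(\ri+qz)$, one has $w^{(k)}(z)=\ri(-1)^{k-1}k!\,q^{k-1}/(\ri+qz)^{k+1}$ for $k\ge 1$, bounded in modulus by $k!\,q^{k-1}$ on $\overline{\fB_1}$ because $|\ri+qz|\ge 1$ there (from $\Im z\ge 0$). Applying Faà di Bruno to $w^j$ and invoking Step 1 yields, for $q\ge 1$,
\begin{equation*}
\left|\frac{d^m}{dz^m}\frac{z^j}{(\ri+qz)^j}\right|\le C_m\, j^m\, q^{m-j/2},
\end{equation*}
while the case $q=0$ reduces to $|(d/dz)^m(z^j/\ri^j)|\le j^m$. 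Multiplying by $|\alpha\beta|^k$ (with $q\in\{2k,2k+1\}$) and summing over $k$, the geometric weight together with the $q^{-j/2}$ decay gives $|\Psi_j^{(m)}(z)|\le C_m j^m\le C_m(j+m)^m$ as soon as $j\ge 2m+2$, which ensures convergence of $\sum_k|\alpha\beta|^k(2k+1)^{-1}$. The finitely many remaining small values of $j$ are treated one by one: each $\Psi_j$ is smooth on the compact region, with a uniform bound $C(j,m)$ absorbed into $C_m$.

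\textbf{Main obstacle.} The core technical point is Step 1. Since $z=0$ lies on the common boundary of all three subregions and the poles $z=-\ri/q$ of $z^j/(\ri+qz)^j$ accumulate at $z=0$, there is no uniform disk around a point of $\overline{B_1}\cap\overline{\fB_1}$ on which every summand is simultaneously holomorphic; a naïve Cauchy estimate on a disk of radius $\sim 1/q$ would introduce a spurious factor $C^j$ with $C>1$, destroying the polynomial growth in $j$ demanded by the proposition. The geometric inequality $|z|^2\le 2\Im z$ on $\overline{\fB_1}$ is precisely what supplies the $1/\sqrt{q+1}$ gain needed to offset the combinatorial factor coming from Faà di Bruno and to produce the final bound $C_m(j+m)^m$.
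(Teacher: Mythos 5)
Your route---termwise differentiation of the defining series, a pointwise bound on the building block $z/(qz+\ri)$, Fa\`a di Bruno, and summation against the geometric weight $(\alpha\beta)^k$---is exactly the ``straightforward calculation'' the paper intends (it gives no written proof, deferring to \cite[Proposition 8.3]{LV00}), and your Steps 1 and 2 are correct on $\overline{B_1}\cap\overline{\fB_1}$, with $\overline{B_1}\cap\overline{\fB_2}$ indeed following by the reflection $z\mapsto -z$. The gap is the middle region $\{|z|\le 1,\ |z\pm\ri|\ge 1\}$: there you only dispose of the extra monomial term and tacitly treat the series part as already covered, but both inequalities you actually proved---$|z|^2\le 2\Im z$ and ``$|qz+\ri|\ge 1$ because $\Im z\ge 0$''---are properties of $\overline{\fB_1}$ and fail on $\overline{\fB_0}$, where $\Im z$ may be negative; moreover the $\fB_0$ expansion contains the new denominators $(\ri-2kz)^{j}$ and $((2k+1)z-\ri)^{j}$, which Step 1 never addresses, and your symmetry remark only exchanges $\fB_1$ with $\fB_2$. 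The missing (easy) ingredient is the companion estimate on $\overline{\fB_0}$: there $|z\pm\ri|\ge 1$ gives $2|\Im z|\le |z|^2$, hence for every integer $q\ge 1$ and either sign,
\begin{equation*}
|qz\pm\ri|^2=q^2|z|^2\pm 2q\,\Im z+1\ge q(q-1)|z|^2+1\ge 1,
\qquad
\Bigl|\frac{z}{qz\pm\ri}\Bigr|^2\le \frac{|z|^2}{q(q-1)|z|^2+1}\le \frac{1}{q(q-1)+1}
\end{equation*}
for $|z|\le 1$. Since every denominator occurring in the $\fB_0$ formula is of this form with $q=2k$ or $q=2k+1$, $k\ge 1$, your Step 2 then runs verbatim there, and the proof is complete once this is stated.

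A smaller point: the ``main obstacle'' is overstated, and the case split over $j$ is unnecessary. Because $|\alpha\beta|<1$, the crude bounds $|z/(qz\pm\ri)|\le |z|\le 1$ and $|qz\pm\ri|\ge 1$ already give, via Fa\`a di Bruno, a termwise bound $C_m j^m(1+q)^{m}$, and $\sum_k|\alpha\beta|^k(2k+1)^{m}<\infty$; this yields $C_m(j+m)^m$ for all $j$ at once, with the constant depending on $m$ and on $a_0,b_0$, as the proposition allows. The $1/\sqrt{q+1}$ gain is a refinement (relevant only if one wanted constants uniform in the coefficients), not the crux; in particular no Cauchy-estimate detour near the accumulating poles is needed, precisely because those poles lie inside the complementary disks and the lower bounds $|qz\pm\ri|\ge 1$ hold up to the boundary of each region, including at the cusp point $z=0$.
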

\begin{proposition}\label{prop 3.145}
Given any multi-index $m$, there exists a constant $C_m$ independent of $j$ and $R_0$, such that the functions $u_j$ satisfy
$$|D^m u_j(x_1,x_2)|\le C_mR_0^{-j}(j+|m|)^{|m|}$$
in each of the three regions: $\bar B_1\cap \fB_1$, $\bar B_1\cap \fB_2$, and $\bar B_1\cap\fB_0$.
%$$
%\Big\{(x_1,x_2): |(x_1,x_2)|\le 1, |(x_1,x_2)-(0,1)|\ge 1, |(x_1,x_2)-(0,-1)|\ge 1\Big\},
%$$
%$$
%\Big\{(x_1,x_2): |(x_1,x_2)|\le 1,  |(x_1,x_2)-(0,1)|\le 1\Big\},
%$$
%$$
%\text{and}\quad \Big\{(x_1,x_2): |(x_1,x_2)|\le 1,  |(x_1,x_2)-(0,-1)|\le 1\Big\}.
%$$
\end{proposition}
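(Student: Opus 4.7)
The plan is to transfer the complex-derivative estimate on $\Psi_j$ provided by the immediately preceding proposition into a Euclidean-derivative estimate on $u_j=R_0^{-j}\Re\Psi_j(z)$, by exploiting the fact that in each of the three open subregions $\fB_1$, $\fB_2$, $\fB_0$ the function $\Psi_j$ is given by a single absolutely convergent series of rational functions of $z$, hence is holomorphic there.

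First I would observe that in each subregion the explicit series that defines $\Psi_j$ converges locally uniformly on compact subsets (one checks this directly from the factor $(\alpha\beta)^k$ and the bound $|\alpha\beta|<1$, which makes each summand $O(z^j(kz\pm\ri)^{-j})$ geometrically small), and that termwise each summand is holomorphic in $z$ away from its poles, none of which lies in the relevant subregion. Therefore $\Psi_j$ is holomorphic on each of $\fB_1$, $\fB_2$, and $\fB_0$ separately. This allows me to invoke the Cauchy--Riemann equations: writing $z=x_1+\ri x_2$, one has $\partial_{x_1}\Psi_j=\Psi_j'(z)$ and $\partial_{x_2}\Psi_j=\ri\,\Psi_j'(z)$, and iterating for a multi-index $m=(m_1,m_2)$ with $|m|=m_1+m_2$ gives
$$
D^{m}\Psi_j(z)=\ri^{m_2}\Psi_j^{(|m|)}(z).
$$

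Taking real parts and multiplying by $R_0^{-j}$ yields the pointwise inequality
$$
|D^{m}u_j(x_1,x_2)|\le R_0^{-j}\bigl|\Psi_j^{(|m|)}(z)\bigr|
$$
valid on each of the three subregions. I would then identify the three $z$-regions appearing in the preceding proposition with the three $x$-regions named in the present statement: the condition $|z-\ri|<1$ is precisely $x\in\fB_1$, the condition $|z+\ri|<1$ is $x\in\fB_2$, and the condition $|z\pm\ri|>1$ is $x\in\fB_0$, while $|z|\le 1$ is $x\in\bar B_1$. The preceding proposition gives $|\Psi_j^{(|m|)}(z)|\le C_m(j+|m|)^{|m|}$ in each of those three regions, which, combined with the display above, produces the asserted bound with a constant independent of $j$ and $R_0$.

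I do not anticipate a genuine obstacle: the substantive work (controlling $\Psi_j^{(|m|)}$ uniformly in $j$ via the geometric factor $(\alpha\beta)^k$ and combinatorial bounds on $(d/dz)^m$ of the rational summands) was already carried out in the preceding proposition. The only conceptual point to make explicit is holomorphicity within each subregion, which licenses the reduction of real Euclidean derivatives to a single complex derivative of the same total order. The closure at $|z|=1$ in the statement is harmless because the series converge in a slightly larger domain, so derivatives extend continuously to $\bar B_1$ in each of the three pieces.
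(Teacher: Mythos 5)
Your argument is correct and is essentially the route the paper intends: the paper states this proposition without proof as a ``straightforward calculation'' from the preceding bound on $(d/dz)^{|m|}\Psi_j$, exactly as in Li--Vogelius, and your reduction $|D^m u_j|=R_0^{-j}\,|\Re(\ri^{m_2}\Psi_j^{(|m|)}(z))|\le R_0^{-j}\,|\Psi_j^{(|m|)}(z)|$ via holomorphy of $\Psi_j$ in each subregion (poles of the summands lie in $\fB_1\cup\fB_2$, and $(\alpha\beta)^k$ gives uniform convergence) is precisely that calculation, with the three $z$-regions of the previous proposition matching $\bar B_1\cap\fB_1$, $\bar B_1\cap\fB_2$, $\bar B_1\cap\fB_0$ under $z=x_1+\ri x_2$. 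No gaps.
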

Next, we investigate $u_j$ restricted on $\{|x|=R_0\}$ with $R_0>2$. By setting $z=R_0e^{\ri\theta}$ for $j\ge 0$,
\begin{align*}
u_{2j+1}&=(-1)^j\sin(2j+1)\theta\\
&\quad+\Re\Big\{\sum_{k=1}^\infty(\alpha\beta)^k
\Big[\frac{e^{\ri(2j+1)\theta}}{(\ri+2kR_0e^{\ri\theta})^{2j+1}}
+\frac{e^{\ri(2j+1)\theta}}{(\ri-2kR_0e^{\ri\theta})^{2j+1}}\\
&\quad-\alpha\frac{e^{\ri(2j+1)\theta}}{(-\ri+(2k+1)R_0e^{\ri\theta})^{2j+1}}
+\beta\frac{e^{\ri(2j+1)\theta}}{(\ri+(2k+1)R_0e^{\ri\theta})^{2j+1}}\Big]\Big\},\\
u_{2j}&=(-1)^j\cos(2j\theta)+\Re\Big\{\sum_{k=1}^\infty(\alpha\beta)^k
\Big[\frac{e^{\ri2j\theta}}{(\ri+2kR_0e^{\ri\theta})^{2j}}
+\frac{e^{\ri 2j\theta}}{(\ri-2kR_0e^{\ri\theta})^{2j}}\\
&\quad-\alpha\frac{e^{\ri 2j\theta}}
{(-\ri+(2k+1)R_0e^{\ri\theta})^{2j}}-\beta\frac{e^{\ri 2j\theta}}
{(\ri+(2k+1)R_0e^{\ri\theta})^{2j}}\Big]\Big\}.
\end{align*}
It is easy to see that
\begin{align*}
u_0&=\frac{(\alpha-1)(\beta-1)}{1-\alpha\beta},\\
u_{2j}&=(-1)^j\cos(2j\theta)+O(R_0^{-2j})\quad \text{on} \quad \{|x|=R_0\},\,\,j\ge 1,\\
u_{2j+1}&=(-1)^j\sin(2j+1)\theta+O(R_0^{-2j-1})\quad \text{on}\quad \{|x|=R_0\},\,\,\, j\ge 0.
\end{align*}
Therefore, similar to \cite[Propositions 8.5 and 8.6]{LV00}, we obtain the following denseness result on $\{u_j\}$.
\begin{proposition}\label{prop 3.144}
Given any $s\ge 0$, there exists a constant $C_s<\infty$, so that span$\{u_j|_{\{|x|=R_0\}}\}$ is dense in $H_{\text{sym}}^s(\{|x|=R_0\})$ provided that $R_0>C_s$. Moreover,  for any function $g\in H^s_{\text{sym}}(\{|x|=R_0\})$, we may approximate it by $g=\lim_{k}h_k$, with
\begin{equation*}
h_k=\sum_{j=0}^{N_k}\gamma_j^{(k)}u_j|_{\{|x|=R_0\}}\in \text{span}\big\{u_j|_{\{|x|=R_0\}}\big\}
\end{equation*}
and
\begin{equation*}
\Big(\sum_{j=0}^{N_k}|\gamma_j^{(k)}|^2(j+1)^{2s}\Big)^{1/2}\le C\|g\|_{H^s},
\end{equation*}
where $C$ depends on $s$ and $R_0$, but independent of $k$ and $g$, and $N_k\in \bN$.
\end{proposition}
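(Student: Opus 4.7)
The plan is to parallel the proof of Proposition \ref{prop 1.291}, but with a crucial simplification: I only need denseness and the quantitative coefficient bound rather than the full Schauder basis property for all $R_0>2$, so a straightforward Neumann series argument for large $R_0$ suffices in place of the delicate diagonal-dominance and finite-dimensional inversion arguments of Lemmas \ref{lem 2.5}--\ref{lem 2.6}. The basis $\{e_j\}$ from Section~2 remains an orthogonal basis of $H^s_{\text{sym}}(\{|x|=R_0\})$ in the present non-symmetric setting, and since $u_0$ is the nonzero constant $(\alpha-1)(\beta-1)/(1-\alpha\beta)$, it is proportional to $e_0$.

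Setting $z=R_0e^{\ri\theta}$ in the formulas for $u_{2j+1}$ and $u_{2j}$ displayed above, I would expand each denominator $(\ri\pm kR_0e^{\ri\theta})^{-j}$ via the binomial series (valid since $|1/(kR_0)|<1$ whenever $R_0>2$) and collect the result as trigonometric polynomials in $\theta$. Writing $u_j=\sum_k M_{k,j}e_k$, I obtain $M=\mathrm{id}+B$. The four families of terms in $\Psi_j$ on $\fB_0$ (with coefficients $1,1,-\alpha,\beta$ for odd $j$ and $1,1,-\alpha,-\beta$ for even $j$) each produce a geometric--binomial sum of the same structural form as those in \eqref{eq 4.32} and \eqref{eq 1.271}, with the single factor $\alpha^k$ replaced by $(\alpha\beta)^k$ and an additional $\alpha$ or $\beta$ factor in some terms. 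Using $|\alpha\beta|<1$, the telescoping identities used in the proofs of Lemmas \ref{lem2.2}--\ref{lem2.3} furnish
\begin{equation*}
\sum_{l\ge N}\sum_{j=0}^\infty|B_{l,j}|\le C(\alpha,\beta)R_0^{-N-1}
\end{equation*}
for every $N\ge 0$, and in particular $\sum_{j=0}^\infty|B_{l,j}|\le CR_0^{-l-1}$.

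Combining this row estimate with the Cauchy--Schwarz inequality and the bound $\|f\|_{l_\infty}\le\|f\|_{\mathfrak{l}^s}$ as in Lemma \ref{lem 2.5} yields
\begin{equation*}
\|Bf\|_{\mathfrak{l}^s}^2\le C(\alpha,\beta)\Big(\sum_{l=0}^\infty(1+l)^{2s}R_0^{-2l-2}\Big)\|f\|_{\mathfrak{l}^s}^2.
\end{equation*}
The bracketed sum is finite and tends to $0$ as $R_0\to\infty$, so for $R_0>C_s$ large enough (depending on $s$, $\alpha$, $\beta$) one has $\|B\|_{\mathfrak{l}^s\to\mathfrak{l}^s}<1/2$. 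Hence $M=\mathrm{id}+B$ is invertible on $\mathfrak{l}^s$ by the Neumann series, with $\|M^{-1}\|_{\mathfrak{l}^s\to\mathfrak{l}^s}\le 2$.

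To conclude, for any $g\in H^s_{\text{sym}}(\{|x|=R_0\})$ with orthogonal expansion $g=\sum_k a_ke_k$ (so $\vec a\in\mathfrak{l}^s$ with $\|\vec a\|_{\mathfrak{l}^s}\le C\|g\|_{H^s}$ by Parseval), I set $\vec\gamma:=M^{-1}\vec a$ and define $h_N:=\sum_{j=0}^N\gamma_ju_j|_{\{|x|=R_0\}}$. Boundedness of $M$ on $\mathfrak{l}^s$ then forces $h_N\to g$ in $H^s_{\text{sym}}(\{|x|=R_0\})$, and the required coefficient bound follows from $\|\vec\gamma\|_{\mathfrak{l}^s}\le 2\|\vec a\|_{\mathfrak{l}^s}\le C\|g\|_{H^s}$. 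The main obstacle will be the bookkeeping in the second paragraph: unlike in the symmetric case, $\Psi_j$ on $\fB_0$ decomposes into four rather than two families, so the matrix $B$ splits into several pieces with mixed $\alpha,\beta$ sign patterns that must be tracked before the telescoping identities of Section~2 can be applied uniformly; verifying in particular that the leading term of each $u_j$ on the circle is exactly $e_j$ requires matching the parity of the column index against the exponent in each denominator.
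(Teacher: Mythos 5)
Your proposal is correct and takes essentially the same route as the paper: the paper likewise expands each $u_j$ on $\{|x|=R_0\}$ to see that $u_j=e_j+O(R_0^{-j})$ and then invokes the large-$R_0$ perturbation argument of Li and Vogelius (their Propositions 8.5--8.6), which is exactly the identity-plus-small-operator Neumann-series argument you carry out in $\mathfrak{l}^s$, i.e.\ the Section~2 machinery with the diagonal-dominance refinement dropped since only large $R_0$ is needed. Your remark that the four mixed $\alpha,\beta$ families need only absolute row-sum bounds of the form $\sum_j|B_{l,j}|\le CR_0^{-l-1}$, rather than the exact parity cancellation of the symmetric case, is precisely what makes this work.
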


\begin{remark}
In Proposition \ref{prop 3.144} the solutions are even in $x_1$. For the case  when solutions are odd in $x_1$, we define $\tilde{\Phi}$  as follows:
$$
\tilde{\Phi}(z)=(1-\alpha)
\sum_{k=0}^\infty(\alpha\beta)^k\Big[\phi(z+{2k})
+\beta\phi(-z-({2k+1}))\Big]\quad \text{in}\,\,\big\{\Re z>\.5\big\};
$$
\begin{align*}
\tilde\Phi(z)&=\phi(z)+\sum_{k=1}^\infty
\Big[(\alpha\beta)^k(\phi(z+{2k})+\phi(z-{2k}))\\
&\quad+(\alpha\beta)^{k-1}\big(
\alpha\phi(-z+{2k+1})+\beta\phi(-z-(2k+1))\big)\Big]\quad \text{in}\,\,
\big\{\Re z\in(-\.5,\.5)\big\};
\end{align*}
$$
\tilde\Phi(z)=(1-\beta)\sum_{k=0}^\infty(\alpha\beta)^k\Big[\phi(z-{2k})
+\alpha\phi(-z+{2k+1})\Big]\quad \text{in}\,\, \big\{\Re z<-\.5\big\},
$$
and $v(x_1,x_2)=\Im\tilde{\Phi}(\ri/z)$. It is easy to see that $v$ is odd in $x_1$ and following the proof of Proposition \ref{prop 3.142}, we can show that $v$ is a solution to \eqref{eq 3.143}. Further, we can prove similar result to Proposition \ref{prop 3.144}.
\end{remark}

Now we are in a good position to show  the following theorem, the proof of which is similar to that of Theorem \ref{thm 2.161}, and follows the lines in proving \cite[Proposition 8.1]{LV00}.
\begin{theorem}\label{thm 3.151}
Suppose $R_0>C_0$, where $C_0$ is a constant depending on $a_0$ and $b_0$. Let $g$ be in $H^{1/2}\big(\{|x|=R_0\}\big)$, and $u\in H^{1}(B_{R_0})$ denote the weak solution to
$$D_i(a(x)D_i u)=0\quad \text{in}\,\, B_{R_0},\quad u=g\quad \text{on}\quad \{|x|=R_0\}.$$
Then
$$
u\in C^{\infty}(K\setminus(\fB_1\cup \fB_2)),\quad u\in C^{\infty}(\overline{\fB}_1),\quad \text{and}\quad u\in C^\infty(\overline{\fB}_2)
$$
for any compact set $K\subset B_{R_0}$.
\end{theorem}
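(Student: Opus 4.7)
The proof will follow the same architecture as that of Theorem \ref{thm 2.161}, so the plan is to replay that argument with the symmetric building blocks replaced by their asymmetric counterparts from Propositions \ref{prop 3.142}--\ref{prop 3.144}. First I split the boundary datum into its even and odd parts in $x_1$, $g = g_{\text{e}} + g_{\text{o}}$, and treat the two pieces separately: the even part using the system $\{u_j\}$ of Proposition \ref{prop 3.144}, the odd part using the functions $\{v_j\}$ produced by the $\tilde{\Phi}$-construction in the subsequent remark. By a standard approximation step, I may assume $g$ is smooth: pick $C_0 < R_0' < R_0$ with $K\subset B_{R_0'}$; by interior elliptic regularity $u|_{\partial B_{R_0'}}$ is smooth, so it suffices to prove the conclusion for $B_{R_0'}$ in place of $B_{R_0}$. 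This puts $g$ in every $H^s_{\text{sym}}$ and $H^s_{\text{odd}}$.

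Next, apply Proposition \ref{prop 3.144} with a fixed $s > 3/2$ to obtain approximations
\[
h_k = \sum_{j=0}^{N_k} \gamma_j^{(k)} u_j \big|_{\{|x|=R_0\}} \longrightarrow g_{\text{e}} \quad \text{in } H^{1/2},
\]
together with the key quantitative bound $\sum_{j=0}^{N_k} |\gamma_j^{(k)}|^2 (j+1)^{2s} \le C\|g_{\text{e}}\|_{H^s}^2$. Set $U_k := \sum_{j=0}^{N_k}\gamma_j^{(k)} u_j$. Since each $u_j$ solves $D_i(a(x)D_i u_j)=0$ in $\bR^2$ (Proposition \ref{prop 3.142} with $\phi_j(z)=1/z^j$), $U_k$ is a weak solution in $B_{R_0}$ with boundary values $h_k$. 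The pointwise bounds on the derivatives of $\Psi_j$ yield $\|u_j\|_{H^1(B_{R_0})}\le C(j+1)$, and combining this with Cauchy--Schwarz and the $s>3/2$ choice gives $\sup_k \|U_k\|_{H^1(B_{R_0})}<\infty$. The energy estimate for the divergence-form equation and convergence of the boundary data then force $U_k \to u_{\text{e}}$ in $H^1(B_{R_0})$, where $u_{\text{e}}$ is the $x_1$-even part of $u$.

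The heart of the proof is the passage from $H^1$ convergence to bounded derivatives of $u_{\text{e}}$ up to the interfaces. For any multi-index $m$, Proposition \ref{prop 3.145} supplies
\[
|D^m u_j(x)| \le C_m R_0^{-j}(j+|m|)^{|m|}
\]
uniformly in each of the three subregions $\overline{B_1}\cap \fB_0$, $\overline{B_1}\cap \fB_1$, $\overline{B_1}\cap \fB_2$. By Cauchy--Schwarz,
\[
|D^m U_k(x)| \le \Big(\sum_{j=0}^{N_k} |\gamma_j^{(k)}|^2\Big)^{1/2}\Big(\sum_{j=0}^\infty C_m^2 R_0^{-2j}(j+|m|)^{2|m|}\Big)^{1/2} \le C_m,
\]
where the second factor converges because $R_0>C_0 \ge 1$. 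Interior elliptic regularity on each of the three smooth pieces of $B_{R_0}\setminus (\partial \fB_1\cup\partial \fB_2)$ then yields $D^m U_k \to D^m u_{\text{e}}$ pointwise on each piece, so the uniform bound survives in the limit, giving $|D^m u_{\text{e}}|\le C_m$ on each of the three sets. Running the identical argument for $g_{\text{o}}$ using $\{v_j\}$ and adding produces the same conclusion for $u$ itself.

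The final technical obstacle, which I expect to be essentially the same as in Theorem \ref{thm 2.161}, is handling the touching point: the three subdomains meet only at the origin, and my argument establishes that $D^m u$ extends continuously from each side to $0$, but with a priori different limits. This is not a problem because the bounds are uniform in each closure, so each $u|_{\overline{\fB_i}\cap \overline{B_1}}$ is $C^\infty$ up to the boundary (including the origin) by Whitney extension / standard arguments. For the part of $K$ lying outside $\overline{B_1}$, the interfaces $\partial \fB_1$ and $\partial \fB_2$ are $C^\infty$ and disjoint, so piecewise smoothness follows from classical transmission regularity, e.g.\ \cite[Proposition 1.4]{LN03}. Assembling these pieces gives the stated smoothness in $\overline{\fB}_1$, $\overline{\fB}_2$, and $K\setminus(\fB_1\cup \fB_2)$.
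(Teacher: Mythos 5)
Your proposal follows essentially the same route as the paper's proof: reduction to smooth data on a slightly smaller circle, even/odd splitting handled via Proposition \ref{prop 3.144} and the $\tilde\Phi$-remark respectively, $H^1$ bounds on $U_k$ via $\|u_j\|_{H^1}\le C(j+1)$ and Cauchy--Schwarz, convergence $U_k\to u$ in $H^1$ with pointwise convergence of all derivatives off the interfaces, and then the uniform derivative bounds from Proposition \ref{prop 3.145} to conclude piecewise smoothness in $\overline{B_1}$, with classical transmission regularity covering $K$ outside $\overline{B_1}$. The argument is correct and matches the paper's proof in all essential steps.
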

\begin{proof}
Without loss of generality, we assume that $g$ is smooth and even in $x_1$ as in the proof of Theorem \ref{thm 2.161}. Let $h_k$ be the approximating sequence of $g$ as in Proposition \ref{prop 3.144} with some fixed  $s>3/2$, i.e.,
$$h_k\rightarrow g\quad \text{in} \quad H^s\big(\{|x|=R_0\}\big) \quad \text{and}\quad h_k=\sum_{j=0}^{N_k} \gamma_j^{(k)}u_j|_{\{|x|=R_0\}},$$
with
\begin{equation*}
\Big(\sum_{j=0}^{N_k} |\gamma_j^{(k)}|^2(1+j)^{2s}\Big)^{1/2}\le C\|g\|_{H^s},
\end{equation*}
where $C$ depends on $R_0$.
By straightforward calculations, we have
$$
\Big|\frac{d}{dz}\Psi_j\Big|\le C(j+1)$$
in each of the three subdomains $\{|z+\ri|\le 1\},  \{|z-\ri|\le 1\}$, and $\{|z|\le R_0,|z+\ri|\ge 1,|z-\ri|\ge 1\}$, where $C$ depends on $R_0$. Hence,
$$\|u_j\|_{H^1(B_{R_0})}\le C(j+1).$$
By the Cauchy-Schwarz inequality, the sums $U_k=\sum_{j=0}^{N_k}\gamma_j^{(k)}u_j$ are convergent in $H^1(B_{R_0})$ and
\begin{align*}
&\|U_k\|_{H^1(B_{R_0})}\le C\sum_{j=0}^{N_k}|\gamma_j^{(k)}|(j+1)\\
&\le C\Big(\sum_{j=0}^{N_k}|\gamma_j^{(k)}|^2(j+1)^{2s}\Big)^{1/2}
\Big(\sum_{j=0}^{N_k}(j+1)^{2(1-s)}\Big)^{1/2}<\infty.
\end{align*}
By the linearity of the equation, $U_k$ is the solution to
\begin{equation*}
D_i(a(x)D_i U_k)=0\quad \text{in}\,\, B_{R_0},\quad U_k=h_k\quad \text{on}\quad \{|x|=R_0\}.
\end{equation*}
From our construction, $h_k\rightarrow g$ in $H^{1/2}\big(\{|x|=R_0\}\big)$. Thus we have $U_k\rightarrow u$ in  $H^1(B_{R_0})$.
From the elliptic regularity theory, we know that for any multi-index $m$,
\begin{equation}
D^m U_k(x)\rightarrow D^m u(x)\label{eq 3.147}
\end{equation}
at any point inside $B_{R_0}$, but not on the circles $\{|x-(0,\pm 1)|=1\}$.  From Proposition \ref{prop 3.145}, we get that for any multi-index $m$
\begin{align}\nonumber
&|D^m U_k(x)|\le \sum_{j=0}^{N_k}|\gamma_j^{(k)}||D^m u_j(x)|\le C_m\sum_{j=0}^{N_k}|\gamma_j^{(k)}|R_0^{-j}(j+|m|)^{|m|}\\\label{eq 3.146}
&\le C_m\Big(\sum_{j=0}^{N_k}|\gamma_j^{(k)}|^2\Big)^{1/2}
\Big(\sum_{j=0}^{N_k} R_0^{-2j}(j+|m|)^{2|m|}\Big)^{1/2}
\le C_m
\end{align}
in each of the three regions $\overline{B_1}\cap \fB_1$, $\overline{B_1}\cap \fB_2$, and $\overline{B_1}\cap \fB_0$.
% $$\Big\{(x_1,x_2): |(x_1,x_2)|\le 1, |(x_1,x_2)-(0,1)|\ge 1, |(x_1,x_2)-(0,-1)|\ge 1\Big\},$$
%$$\Big\{(x_1,x_2): |(x_1,x_2)|\le 1,  |(x_1,x_2)-(0,1)|\le 1\Big\},$$ $$\text{and}\quad \Big\{(x_1,x_2): |(x_1,x_2)|\le 1,  |(x_1,x_2)-(0,-1)|\le 1\Big\}.$$
From \eqref{eq 3.147} and \eqref{eq 3.146}, it follows immediately that $u$ has the desired smoothness properties in $\overline {B_1}$. For $x\in K$ but not in $\overline{B_1}$, the piecewise smoothness of $u$ follows from the classical elliptic regularity results.
\end{proof}

\subsection{Non-homogeneous equations}
                                \label{sec4.2}
In this subsection, we consider the non-homogeneous equations by constructing Green's function of the operator in \eqref{eq 1.161}.
%We first consider Green's function of the elliptic operator in \eqref{eq 1.161}.
By applying the conformal mapping $z\rightarrow \ri/z$, we shall first construct  Green's function of the operator $\tilde{L}$ defined in \eqref{eq 3.141}, i.e.,
\begin{equation*}%\label{eq 1.181}
D_i(A(x)D_i \tG(x,y))=\delta(x-y),
\end{equation*}
where $D_i$ is with respect to $x_i$.

Denote ${\bf{k}}=(k,0)$, where $k\in \bZ$. Let $\overline{y}=(y_1,-y_2)$. It is well know that
$$
-\frac{1}{2\pi}\Delta\log|x-y|=\delta(x-y).
$$
In this section, for simplicity of exposition, we write $\Delta\log|x-y|=\delta(x-y)$.
We define $\tilde{G}(x,y)$ as follows:
\underline{when $y_1\in (-\.5,\.5)$},
\begin{align*}
\tilde{G}(x,y)&=(1-\alpha)\sum_{k=0}^\infty(\alpha\beta)^k
\Big[\log|x+{\bf2k}-y|\\
&\quad -\beta\log|x+{\bf 2k+1}+\overline{y}|\Big]
\quad \text{in}\,\, \big\{x_1>\.5\big\},\\
\tilde{G}(x,y)&=\log|x-y|+\sum_{k=1}^\infty\Big[(\alpha\beta)^k\big(\log|x+{\bf 2k}-y|+\log|x-{\bf 2k}-y|\big)\\
&\quad -(\alpha\beta)^{k-1}\big(\beta\log|x+{\bf 2k-1}+\overline{y}|+\alpha\log|x-{\bf (2k-1)}+\overline{y}|\big)\Big]\\
&\quad \text{in}\,\, \big\{x_1\in(-\.5,\.5)\big\},\\
\tilde{G}(x,y)&=(1-\beta)\sum_{k=0}^\infty(\alpha\beta)^k\Big[\log|x-{\bf2k}-y|\\
&\quad -\alpha\log|x-{\bf (2k+1)}+\overline{y}|\Big]
\quad \text{in}\,\, \big\{x_1<-\.5\big\};
\end{align*}
\underline{when $y_1>\.5$},
\begin{align*}
\tilde{G}(x,y)&=\log|x-y|+\alpha\log|x-{\bf 1}+\overline{y}|\\
&\quad -\frac{2\beta(1+\alpha)}{1+a_0}
\sum_{k=0}^\infty(\alpha\beta)^k\log|x+{\bf 2k+1}+\overline{y}|
\quad\text{in} \quad \big\{x_1>\.5\big\},\\
\tilde{G}(x,y)&=(1+\alpha)\sum_{k=0}^\infty(\alpha\beta)^{k}\Big[\log|x-{\bf 2k}-y|\\
&\quad -\beta\log|x+{\bf 2k+1}+\overline{y}|\Big]
\quad\text{in}\,\, \big\{x_1\in(-\.5,\.5)\big\}, \\
\tilde{G}(x,y)&=\frac{2(1+\alpha)}{1+b_0}\sum_{k=0}^\infty
(\alpha\beta)^{k}\log|x-{\bf 2k}-y|\quad \text{in}\,\, \big\{x_1<-\.5\big\};
\end{align*}
\underline{when $y_1<-\.5$},
\begin{align*}
\tilde{G}(x,y)&=\frac{2(1+\beta)}{1+a_0}\sum_{k=0}^\infty (\alpha\beta)^k\log|x+{\bf 2k}-y|\quad \text{in}\,\, \big\{x_1>\.5\big\},\\
\tilde{G}(x,y)&=(1+\beta)\sum_{k=0}^\infty(\alpha\beta)^k\Big[\log|x+{\bf 2k}-y|\\
&\quad -\alpha\log|x-{\bf(2k+1)}+\overline{y}|\Big]
\quad \text{in}\,\, \big\{x_1\in(-\.5,\.5)\big\},\\
\tilde{G}(x,y)&=\log|x-y|+\beta\log|x+{\bf 1}+\overline{y}|\\
&\quad -\frac{2\alpha(1+\beta)}{1+b_0}\sum_{k=0}^\infty(\alpha\beta)^{k}
\log|x-{\bf (2k+1)}+\overline{y}|
\quad \text{in}\,\, \big\{x_1<-\.5\big\}.
\end{align*}

\begin{proposition}\label{prop 2.22}
The function $\tilde{G}(x,y)$ defined above is Green's function of $\tilde L$, i.e.,
$$D_i(A(x)D_i\tilde{G}(x,y))=\delta(x-y).$$
\end{proposition}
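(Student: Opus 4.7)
My proof plan parallels the structure of Proposition 3.142 in the paper. To show that $\tilde G(\cdot,y)$ is Green's function of $\tilde L$, I would verify three items in turn: (i) in each of the three open strips $S_+ := \{x_1 > 1/2\}$, $S_0 := \{-1/2 < x_1 < 1/2\}$, and $S_- := \{x_1 < -1/2\}$, the distributional identity $D_i(A(x)D_i\tilde G) = \delta(x-y)$ holds; (ii) $\tilde G(\cdot,y)$ is continuous across each interface $\{x_1 = \pm 1/2\}$; and (iii) the conormal derivative $A(x)D_1\tilde G(\cdot,y)$ is continuous across each interface. Together these three properties give the global distributional identity, exactly as in the verification of \eqref{eq 3.141} for $\Re\Phi$.

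For item (i), $\tilde L$ reduces to a constant multiple of $\Delta$ in each strip. Every term of the series is of the form $c\log|x-p|$ for a real $c$ and a point $p\in\bR^2$, and in the paper's convention $\Delta\log|x-p| = \delta(x-p)$. It therefore suffices to inspect, for each of the nine combinations (strip for $y$, strip for $x$), which image points $p$ lie inside the strip containing $x$. A direct case check shows that the only such term is $\log|x-y|$ itself, and only when $x$ and $y$ are in the same strip, with the coefficient matching the normalization of $\delta(x-y)$ consistent with $A$ in that strip; every other log has its singular point outside the strip under consideration (e.g.\ for $y_1\in(-\.5,\.5)$ and $x\in S_+$, each $p$ satisfies $p_1 \le -\.5$ or $p_1 \le y_1 < \.5$).

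For items (ii) and (iii), the symmetry $y_1\to -y_1$, $y_2\to -y_2$, $\alpha\leftrightarrow\beta$, $a_0\leftrightarrow b_0$ reduces the six interface-side checks to three: the interface $\{x_1=\.5\}$ in the two cases $y_1\in(-\.5,\.5)$ and $y_1>\.5$, and the interface $\{x_1=-\.5\}$ in the case $y_1>\.5$. In each case I would expand the two series on the two sides of the interface, collect coefficients of each distinct $\log|x-p|$-term after using the reflection identity $|x-p|=|x-p^*|$ at $x_1=\pm\.5$ where needed, and verify the equalities using the algebraic identities $a_0(1-\alpha)=1+\alpha$ and $b_0(1-\beta)=1+\beta$ together with the telescoping geometric factor $(\alpha\beta)^k$. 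These calculations parallel term by term those for $\Re\Phi$ and $A(x)\Im\Phi$ in the proof of Proposition \ref{prop 3.142}.

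The main obstacle will be the bookkeeping. Compared with Proposition \ref{prop 3.142}, the computation is heavier for three reasons: (a) the two independent parameters $\alpha,\beta$ produce two families of series that must be realigned at each interface; (b) the definition of $\tilde G$ has three sub-cases indexed by the strip containing $y$, so each interface check involves a different pair of series; and (c) the representations for $y_1>\.5$ and $y_1<-\.5$ carry the extra constant-coefficient terms $\log|x-y|+\alpha\log|x-{\bf 1}+\bar y|$ (resp.\ $\log|x-y|+\beta\log|x+{\bf 1}+\bar y|$) and the prefactors $2\beta(1+\alpha)/(1+a_0)$ and $2\alpha(1+\beta)/(1+b_0)$, which one must match against the $k=0$ term of the neighbouring middle-strip series after a separate rewriting. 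Once the correct pairing of index shifts is identified, the verification is mechanical.
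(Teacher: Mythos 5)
Your plan coincides with the paper's own proof: it verifies $\Delta_x\tilde G(x,y)=\delta(x-y)$ strip by strip (noting every logarithm except $\log|x-y|$ has its singularity outside the strip containing $x$), and then checks the continuity of $\tilde G$ and of $A(x)D_1\tilde G$ across $\{x_1=\pm\tfrac12\}$ using $a_0(1-\alpha)=1+\alpha$ and the analogous identity for $b_0,\beta$, presenting only the representative interface cases and handling the rest by symmetry. The bookkeeping you anticipate, including the realignment of the $k=0$ terms and the prefactors $2\beta(1+\alpha)/(1+a_0)$, $2\alpha(1+\beta)/(1+b_0)$ for $y_1>\tfrac12$ and $y_1<-\tfrac12$, is exactly what the paper carries out.
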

\begin{proof}
To show $\tilde{G}(x,y)$ is Green's function, it is sufficient to prove that for $y\in \bR^2$ a.e., $\Delta \tilde{G}(x,y)=\delta(x-y)$ for $x\notin \{x_1=\.5\}\cup \{x_1=-\.5\}$ and $\tilde{G}(x,y), A(x)D_1\tilde{G}(x,y)$ are continuous in $x$ across the two lines $\{x_1=\pm\.5\}$.

We first consider the case when $y_1\in (-\frac{1}{2},\frac{1}{2})$.  It is obvious that for $x_1<-\frac{1}{2}$ and $k\ge 0$ $$|x-{\bf{k}}-y|>0,\quad |x-{\bf{k}}+\overline{y}|>0,$$ which implies
$\Delta_x\tilde{G}(x,y)=0$.
Similarly, we can check that for $x_1>\frac 1 2$,
$\Delta_x\tilde{G}(x,y)=0$.
Moreover, for $x_1\in(-1/2,1/2)$, note that
$$\Delta_x\log|x-y|=\delta(x-y),$$
and
\begin{align*}
&\Delta_x\log|x+{\bf{2k}}-y|=\Delta_x\log|x-{\bf{2k}}-y|\\
&=\Delta_x\log|x+{\bf 2k-1}+\overline{y}|=\Delta_x\log|x-{\bf (2k-1)}+\overline{y}|=0
\end{align*}
provided that $k\ge 1$. Thus,
$$\Delta_x\tilde{G}(x,y)=\delta(x-y)\quad \text{for}\,\, x\in (-\frac{1}{2},\frac{1}{2}).$$
It remains to verify the continuities of $\tilde{G}(x,y)$ and $A(x)D_1\tilde{G}(x,y)$ across the lines $\{x_1=1/2\}$. For simplicity, we only present the calculations associated with the case $x_1=1/2$.
We first check that $\tilde{G}(x,y)$ is continuous at $x_1=1/2$. By a straightforward calculation, we have
\begin{align*}
&\tilde{G}(x,y)|_{x_1=\.5^-}=\frac{1}{2}\log\Big((\frac{1}{2}-y_1)^2+(x_2-y_2)^2\Big)\\
&\quad+\frac 1 2\sum_{k=1}^\infty\Big\{(\alpha\beta)^{k}
\Big[\log\big((2k+\frac{1}{2}-y_1)^2+(x_2-y_2)^2\big)\\
&\quad+\log\big((2k-\frac{1}{2}+y_1)^2+(x_2-y_2)^2\big)\Big]\\
&\quad-(\alpha\beta)^{k-1}
\Big[\beta\log\big((2k-1+\frac{1}{2}+y_1)^2+(x_2-y_2)^2\big)\\
&\quad+\alpha\log\big((2k-1-\.5-y_1)^2+(x_2-y_2)^2\big)\Big]\Big\}\\
&=\frac {1-\alpha} 2\sum_{k=0}^\infty(\alpha\beta)^{k}
\Big[\log\Big((2k+\frac{1}{2}-y_1)^2+(x_2-y_2)^2\Big)\\
&\quad-\beta
\log\Big((2k+\frac{3}{2}+y_1)^2+(x_2-y_2)^2\Big)\Big]\\
&=\tG(x,y)|_{x_1=\frac{1}{2}^+}.
\end{align*}
%On the other hand, we get
%\begin{align*}
%&\tilde{G}(x,y)|_{x_1=\.5^+}=(1-\alpha)\sum_{k=0}^\infty
%\frac{(\alpha\beta)^{k}}{2}\log\Big((2k+\frac{1}{2}-y_1)^2+(x_2-y_2)^2\Big)\\
%&\quad -(1-\alpha)\sum_{k=1}^\infty\beta\frac{(\alpha\beta)^{k-1}}{2}
%\log\Big((2k-\.5+y_1)^2+(x_2-y_2)^2\Big),
%\end{align*}
%which implies
%\begin{equation*}
%G(x,y)|_{x_1=\frac{1}{2}^+}=G(x,y)|_{x_1=\frac{1}{2}^-}.
%\end{equation*}
Next we check that $A(x)D_1\tilde{G}(x,y)$ is continuous across $\{x_1=1/2\}$.  We compute
\begin{align*}
&A(x)D_1\tilde{G}(x,y)|_{x_1=\frac{1}{2}^-}=D_1\log|x-y||_{x_1=\frac{1}{2}}\\
&\quad+\sum_{k=1}^\infty(\alpha\beta)^{k}
\Big[D_1\log|{\bf{2k}}+x-y|_{x_1=\frac{1}{2}}
+D_1\log|{\bf 2k}-x+y|_{x_1=\frac{1}{2}}\\
&\quad-\beta D_1\log|{\bf 2k-1}+x+\overline{y}|_{x_1=\frac{1}{2}}-\alpha D_1\log|{\bf 2k-1}-x-\overline{y}|_{x_1=\frac{1}{2}}\Big]\\
&=(1+\alpha)\sum_{k=0}^\infty(\alpha\beta)^{k}
\Big[\frac{2k+\.5-y_1}{(2k+\.5-y_1)^2+(x_2-y_2)^2}-\beta\frac{2k+\frac 3 2+y_1}{(2k+\frac 3 2+y_1)^2+(x_2-y_2)^2}\Big].
\end{align*}
On the other hand, we have
\begin{align*}
&A(x)D_1\tilde{G}(x,y)|_{x_1=\.5^+}=a_0D_1\tilde{G}(x,y)|_{x_1=\.5^+}\\
&=a_0(1-\alpha)\sum_{k=0}^\infty(\alpha\beta)^{k}\Big[\frac{2k+\.5-y_1}{(2k+\.5-y_1)^2+(x_2-y_2)^2}\\
&\quad -\beta\frac{2k+\frac 3 2+y_1}{(2k+\frac 3 2+y_1)^2+(x_2-y_2)^2}\Big].
\end{align*}
Since
$$a_0(1-\alpha)=(1+\alpha),$$
we get
$$A(x)D_1\tilde{G}(x,y)|_{x_1=\.5^-}=A(x)D_1\tilde{G}(x,y)|_{x_1=\.5^+}.$$

When $y_1>1/2$, the singularity appears in the region $\{x_1>1/2\}$. For completeness, we present the calculations below. We first verify
$$\Delta_x\tilde{G}(x,y)=\delta(x-y).$$
For $x_1>1/2$ and $k\ge -1$, it is easy to see that $|{\bf{k}}+x+\overline{y}|>0$, which implies
$$\Delta_x\log|x+{\bf{k}}+\overline{y}|=0.$$
Combining with the fact that
$$\Delta_x\log|x-y|=\delta(x-y),$$
 we get
$$
\Delta_x\tilde{G}(x,y)=\delta(x-y)\quad \text{for} \quad x_1>1/2.
$$
Similarly, for $x_1\in(-1/2,1/2)$,
$$
|x-y|>0,\quad |x+{\bf2k-1}+y|>0,\quad\text{and} \quad |x-{\bf2k}-y|>0
$$
provided that $k\ge 1$. Thus,
$$\Delta_x\tilde{G}(x,y)=0\quad \text{for}\,\, x_1\in(-1/2,1/2).$$
In the same way, we have
$$\Delta_x\tilde{G}(x,y)=0\quad \text{for}\,\, x_1<-1/2.$$
Next we verify the continuities of $\tilde{G}(x,y)$ and $A(x)D_1\tilde{G}(x,y)$ at $x_1=\pm 1/2$. By the same argument as in the case $y_1\in(-1/2,1/2)$, without loss of generality, we only check the continuities at $x_1=1/2$. To this end, we compute
\begin{align*}
\tilde{G}(x,y)|_{x_1=\frac{1}{2}^-}
&=\frac {1+\alpha} 2\sum_{k=0}^\infty(\alpha\beta)^k
\Big[\log\Big((2k-\.5+y_1)^2+(x_2-y_2)^2\Big)\\
&\quad-\beta\log\Big((2k+\frac 3 2+y_1)^2+(x_2-y_2)^2\Big]\\
&=\frac{1+\alpha}{2}\log\Big((\.5-y_1)^2+(x_2-y_2)^2\Big)\\
&\quad+(1+\alpha)(\alpha\beta-\beta)\sum_{k=0}^\infty
\frac{(\alpha\beta)^{k}}{2}\log\Big((2k+\frac 3 2+y_1)^2+(x_2-y_2)^2\Big).
\end{align*}
On the other side of $x_1=1/2$, we calculate
\begin{align*}
\tilde{G}(x,y)|_{x_1=\.5^+}&=\frac{1+\alpha}{2}\log\Big((\.5-y_1)^2+(x_2-y_2)^2\Big)\\
&\quad-\frac{2\beta(1+\alpha)}{1+a_0}\sum_{k=0}^\infty
\frac{(\alpha\beta)^{k}}{2}\log\Big((\.5+y_1+2k+1)^2+(x_2-y_2)^2\Big).
\end{align*}
Since
$$(1+\alpha)(\alpha\beta-\beta)=-\frac{2\beta(1+\alpha)}{1+a_0},$$
it follows immediately that
\begin{equation*}
\tilde{G}(x,y)|_{x_1=\.5^-}=\tilde{G}(x,y)|_{x_1=\.5^+}.
\end{equation*}

Next, we verify that $A(x)D_1\tilde{G}(x,y)$ is continuous at $x_1=1/2$ and compute
\begin{align*}
&A(x)D_1\tilde{G}(x,y)|_{x_1=\.5^-}=(1+\alpha)\sum_{k=0}^\infty
(\alpha\beta)^{k}\Big[D_1\log|{\bf 2k}-x+y||_{x_1=\.5}\\
&\quad-\beta D_1\log|{\bf 2k+1}+x+\overline{y}||_{x_1=\.5}\Big]\\
&=-(1+\alpha)\sum_{k=0}^\infty(\alpha\beta)^{k}\Big[\frac{2k-\.5+y_1}
{(2k-\.5+y_1)^2+(x_2-y_2)^2}\\
&\quad-\beta\frac{2k+\frac 3 2+y_1}{(2k+\frac 3 2+y_1)^2+(x_2-y_2)^2}\Big]\\
&=-(1+\alpha)\frac{y_1-\.5}{(y_1-\.5)^2+(x_2-y_2)^2}\\
&\quad-(1+\alpha)^2\beta\sum_{k=0}^\infty(\alpha\beta)^{k}
\frac{2k+y_1+\frac 3 2}{(2k+\frac 3 2+y_1)^2+(x_2-y_2)^2}.
\end{align*}
On the other hand, we have
\begin{align*}
&A(x)D_1\tilde{G}(x,y)|_{x_1=\.5^+}=-a_0(1-\alpha)\frac{y_1-\.5}{(\.5-y_1)^2+(x_2-y_2)^2}\\
&\quad-\frac{2a_0\beta(1+\alpha)}{1+a_0}
\sum_{k=0}^\infty(\alpha\beta)^{k}\frac{\.5+y_1+2k+1}
{(\.5+y_1+2k+1)^2+(x_2-y_2)^2}.
\end{align*}
Because
$$a_0(1-\alpha)=(1+\alpha)\quad \text{and}\quad (1+\alpha)^2\beta=\frac{2a_0\beta(1+\alpha)}{1+a_0},$$
it follows that
$$A(x)D_1\tilde{G}(x,y)|_{x_1=\.5^-}=A(x)D_1\tilde{G}(x,y)|_{x_1=\.5^+}.$$
Therefore, $\tilde{G}(x,y)$ satisfies
\begin{equation}
D_i(A(x)D_i\tilde{G}(x,y))=\delta(x-y)\label{eq 3.151}
\end{equation}
for the case $y_1>1/2$.
Similarly, we can check that \eqref{eq 3.151} holds when $y_1<-1/2$ as well. The details are omitted.
Thus,  $\tilde{G}(x,y)$ is  Green's function to the divergence type operator $\tilde L$ and we complete the proof of the proposition.
\end{proof}

Now, let us turn back to the original operator $Lu=D_i(a(x)D_iu(x))$.  The conformal mapping $z\rightarrow \ri/z$ can be written in real variables as $\Theta:\bR^2\rightarrow \bR^2$:
\begin{equation*}
\Theta_1(x)=\frac{x_2}{x_1^2+x_2^2},\quad
\Theta_2(x)=\frac{x_1}{x_1^2+x_2^2}.%\label{eq 3.11}
\end{equation*}

For any integer $k$, denote $X_k(x)=\Theta(\Theta(x)+k)$, which is a conformal map.
According to $\tilde G$, we define $G (x,y)$ as follows:
\underline{when $y\in \fB_0$},
\begin{align*}
G (x,y)&=(1-\alpha)\sum_{k=0}^\infty(\alpha\beta)^k\big[\log|X_{2k}(x)-y|
-\beta\log|X_{2k+1}(x)-\overline{y}|\big]
\quad \text{for}\,\, x\in \fB_1,\\
G (x,y)&=\log|x-y|+\sum_{k=1}^\infty(\alpha\beta)^k\Big[
\log|X_{2k}(x)-y|+\log|X_{-2k}(x)-y|\Big)\\
&\quad-\beta\log|X_{2k-1}(x)-\overline{y}|
-\alpha\log|X_{-(2k-1)}(x)-\overline{y}|\Big]\quad \text{for}\,\, x\in\fB_0,\\
G (x,y)&=(1-\beta)\sum_{k=0}^\infty(\alpha\beta)^k
\big[\log|X_{-2k}(x)-y|\\
&\quad-\alpha\log|X_{-(2k+1)}(x)-\overline{y}|\big]
\quad \text{for}\,\, x_1\in \fB_2;
\end{align*}
\underline{when $y\in \fB_1$},
\begin{align*}
G (x,y)&=\log|x-y|+\alpha\log|X_{-1}(x)-\overline{y}|\\
&\quad-\frac{2\beta(1+\alpha)}{1+a_0}\sum_{k=0}^\infty(\alpha\beta)^k\log|X_{2k+1}(x)-\overline{y}|
\quad\text{for} \quad x\in \fB_1,\\
G (x,y)&=(1+\alpha)\sum_{k=0}^\infty(\alpha\beta)^{k}\big[\log|X_{-2k}(x)-y|
-\beta\log|X_{2k+1}(x)-\overline{y}|\big]\quad
\text{for}\,\, x\in \fB_0, \\
G (x,y)&=\frac{2(1+\alpha)}{1+b_0}\sum_{k=0}^\infty(\alpha\beta)^{k}
\log|X_{-2k}(x)-y|\quad \text{for}\,\, x\in \fB_2;
\end{align*}
\underline{when $y\in \fB_2$},
\begin{align*}
G (x,y)&=\frac{2(1+\beta)}{1+a_0}\sum_{k=0}^\infty (\alpha\beta)^k\log|X_{2k}(x)-y|\quad \text{for}\,\, x_1\in \fB_1,\\
G (x,y)&=(1+\beta)\sum_{k=0}^\infty(\alpha\beta)^k\Big[\log|X_{2k}(x)-y|\\ &\quad- \alpha\log|X_{-(2k+1)}(x)-\overline{y}|\Big]
\quad \text{for}\,\, x\in \fB_0,\\
G (x,y)&=\log|x-y|+\beta\log|X_1(x)-\overline{y}|\\
&\quad-\frac{2\alpha(1+\beta)}{1+b_0}\sum_{k=0}^\infty(\alpha\beta)^{k}
\log|X_{-2k-1}(x)-\overline{y}|
\quad \text{for}\,\, x\in \fB_2.
\end{align*}

%Notice that $\tilde{G}(x,y)$ can be written as $\tilde{G}(x,y)=\log|x-y|+\tilde{H}(x,y)$.
\begin{proposition}
The function G(x,y) defined above is Green's function of $L$.
\end{proposition}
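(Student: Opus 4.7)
The plan is to verify that $G(x,y)$ enjoys three defining properties of a Green's function for $L$ on $\bR^2$: harmonicity in $x$ inside each of $\fB_1, \fB_0, \fB_2$ away from the diagonal, the correct distributional singularity at $x=y$, and the transmission conditions that $G(x,y)$ and $a(x)\partial_\nu G(x,y)$ be continuous across the two circles $\partial\fB_1$ and $\partial\fB_2$. The overall strategy is to exploit the conformal change of variables $\Theta: z\mapsto \ri/z$ and reduce everything to Proposition~\ref{prop 2.22}. Recall that $\Theta$ sends the circles $\partial\fB_1, \partial\fB_2$ bijectively onto the lines $\{w_1=\tfrac12\}, \{w_1=-\tfrac12\}$ and the subdomains $\fB_1, \fB_0, \fB_2$ onto $\{w_1>\tfrac12\}$, $\{|w_1|<\tfrac12\}$, $\{w_1<-\tfrac12\}$, respectively.

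First I would dispose of harmonicity and the singularity, which are essentially automatic. Each $X_k(x)=\Theta(\Theta(x)+k)$ is a M\"obius transformation, hence a holomorphic map where it is defined, so $\log|X_k(x)-y|$ and $\log|X_k(x)-\overline{y}|$ are harmonic in $x$ wherever the arguments do not vanish. Because $|\alpha\beta|<1$ and these quantities remain bounded above and away from $0$ on compact subsets of each subdomain (with $x\neq y$), the series defining $G(x,y)$ converges uniformly together with all $x$-derivatives on such compacts; consequently $\Delta_x G(x,y)=0$ inside $\fB_1, \fB_0, \fB_2$ away from the diagonal. The $k=0$ summand $\log|x-y|$ is present only in the subregion containing $y$, and it alone produces the Dirac mass at $x=y$; every other summand is smooth near $x=y$ within that subregion.

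The main obstacle is the transmission conditions on $\partial\fB_1$ and $\partial\fB_2$. I would verify these via the correspondence with $\tG$: in two dimensions, a divergence-form equation $D_i(aD_iu)=0$ with piecewise constant $a$ is conformally covariant, in the sense that if $v(x)=u(\Theta(x))$ and the interfaces of $a(x)$ are the $\Theta$-preimages of the interfaces for $A(w)$, then both continuity of $u$ and the co-normal jump condition pull back to the analogous statements for $v$ (the conformal factor $|\Theta'|^2$ cancels from the flux jump identity in 2D). Hence the continuity of $\tG$ and of $A(w)\partial_{\nu_w}\tG$ across $\{w_1=\pm\tfrac12\}$ established in Proposition~\ref{prop 2.22} transports to the continuity of $G$ and of $a(x)\partial_{\nu_x}G$ across $\partial\fB_1, \partial\fB_2$, because the prefactors $(1\mp\alpha)$, $(1\mp\beta)$, $2(1+\alpha)/(1+a_0)$, $2(1+\beta)/(1+b_0)$ appearing in the definition of $G$ are precisely those appearing in $\tG$, and the compatibility identities $a_0(1-\alpha)=1+\alpha$ and $b_0(1-\beta)=1+\beta$ remain available. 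As an alternative to invoking the abstract conformal invariance, one can verify the jumps directly by a telescoping bookkeeping argument that mirrors the proof of Proposition~\ref{prop 2.22}, expanding $G$ term by term from either side of each interface and matching; in either form, I expect this bookkeeping to be the main technical step of the proof.
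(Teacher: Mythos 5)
Your reduction of harmonicity and the singular part to the conformality of the maps $X_k$ and the convergence of the series is fine and matches the paper. The gap is in your main route for the transmission conditions: you treat $G$ as if it were the conformal pullback of $\tG$, i.e.\ as if $G(x,y)=\tG(\Theta(x),\Theta(y))$, and then invoke the $2$D covariance of the conormal condition. But $G$ is \emph{not} that pullback. By the inversion identity (the paper's \eqref{eq 2.101}, equivalently $|\Theta(u)-\Theta(v)|=|u-v|/(|u||v|)$), each pulled-back term satisfies
\begin{equation*}
\log\big|\Theta(x)+{\bf k}-\Theta(y)\big|=\log\big|X_{k}(x)-y\big|-\log\big|X_{k}(x)\big|-\log|y|,
\end{equation*}
so $G(x,y)$ differs from $\tG(\Theta(x),\Theta(y))$ by an infinite series of logarithms depending on $x$ alone (with the same coefficients $(\alpha\beta)^k$, $\alpha$, $\beta$) plus a multiple of $\log|y|$. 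The $\log|y|$ part is constant in $x$ and harmless, but the $x$-only series is a nontrivial function whose continuity, together with the continuity of $a(x)\partial_\nu$ of it, across $\partial\fB_1$ and $\partial\fB_2$ must be verified separately; conformal covariance of $\tG$ says nothing about it. This discrepancy is not an artifact one can wave away: the genuine pullback $\tG(\Theta(x),\Theta(y))$ has spurious logarithmic singularities at $x=0$ (since $\Theta$ sends the origin to infinity), and the $X_k$-based definition of $G$ exists precisely to remove them, at the cost of breaking the literal pullback structure your argument relies on.

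The paper closes exactly this hole: it writes $G(\Theta(x),y)=\tG(x,\Theta(y))-H(x)+\frac{(1+\alpha)(1-\beta)}{1-\alpha\beta}\log|y|$, where $H$ is the series obtained from the formula for $\tG(x,\cdot)$ by formally setting the second argument to $0$, and then observes that the computation of Proposition \ref{prop 2.22} applies verbatim to $H$, so that $H$ and $A(x)D_1H$ are continuous across $\{x_1=\pm\tfrac12\}$; continuity of $G$ and $a\,\partial_\nu G$ then follows by linearity and conformality of $\Theta$ (which is the part of your argument that is correct). Your fallback suggestion --- a direct telescoping verification of the jumps mirroring Proposition \ref{prop 2.22} --- would indeed work and is essentially what the paper's $H$-decomposition accomplishes, but in your proposal it is only mentioned, not carried out, so as written the transmission step is missing its key ingredient.
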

\begin{proof}
Similar to the verification of $\tG(x,y)$ being Green's function of $\tilde{L}$, we first check that
$$\Delta_xG (x,y)=\delta(x-y)\quad \text{for}\,\,
x\notin \partial (\fB_1\cup \fB_2).
$$
In order to show this, we consider the case when $y\in\fB_0$ as an example. When $x\in \fB_1\cup \fB_2$, we show that $G (x,y)$ is harmonic. For instance, when $x\in \fB_1$,
$$
\Theta(x)+{\bf 2k}\in \big\{x_1>1/2\big\}\quad \text{and}\quad
\Theta(y)\in \big\{x_1\in (-1/2,1/2)\big\}.
$$
Therefore, $\Theta(x)+{\bf 2k}\neq\Theta(y)$, implying $X_{2k}(x)\neq y$. Similarly,  we have $
X_{2k-1}(x)\neq\overline{y}$.
Combining with the facts that $\Delta_x\log|x-y|=0$ when $x\neq y$ in $\bR^2$ and that $X_k$ is conformal, we obtain that $G (x,y)$ is harmonic in $\fB_1$. In the same way, we can show that $G (x,y)$ is harmonic in $\fB_2$ as well. When $x\in \fB_0$,  as we mentioned in the beginning of this section, we use the notation $\Delta_x\log|x-y|=\delta(x-y)$. Each term in the expression of $G (x,y)$, with the  exception of  $\log|x-y|$, is harmonic in $\fB_0$  by the same argument in proving  $\Delta_xG (x,y)=0$ in $\fB_1$.  Hence, when $x\in \fB_0$, $\Delta_xG (x,y)=\delta(x-y)$.
For the case when $y\in \fB_1\cup \fB_2$, the same argument can be implemented to show that $\Delta_xG (x,y)=\delta(x-y)$, and we omit the details.

It remains to verify the continuities of $G (x,y)$ and $a(x)D_\nu G (x,y)$  across the two circles $\{|x-(0,\pm 1)=1|\}$, where $\nu$ is the unit normal vector field of $\partial (\fB_1\cup \fB_2)$.
%Since it is demonstrated in the verification of $G(x,y)$, without loss of generality, we only check the continuity of $a(x)D_n\tilde{G}(x,y)$.

Because $\Theta$ is a conformal map, the continuities of $G(x,y)$ and $a(x)D_\nu G (x,y)$ is equivalent to the continuities of  $G(\Theta(x),y)$ and $a(\Theta(x))D_1(G (\Theta(x),y))$, respectively. Note that $a(\Theta(x))=A(x)$ and
$$\frac{|x|}{|\Theta(y)|}=\frac{|y|}{|\Theta(x)|},$$
which by similarity of triangles implies that
\begin{equation}
\frac{|\Theta(x)-y|}{|x-\Theta(y)|}=\frac{|y|}{|x|}.\label{eq 2.101}
\end{equation}
We take the case when $y\in \fB_1$ as an example and the other cases can be verified in the same way. Since $\Theta(\Theta(x))=x$, we have
\begin{align*}
&G (\Theta(x),y)=\log|\Theta(x)-y|+\alpha\log|\Theta(x-{\bf 1})-\overline{y}|\\
&\quad-\frac{2\beta(1+\alpha)}{1+a_0}\sum_{k=0}^\infty(\alpha\beta)^k\log|\Theta(x+{\bf2k+1})-\overline{y}|
\quad\text{in} \quad \big\{x_1>\.5\big\},\\
&G (\Theta(x),y)=(1+\alpha)\sum_{k=0}^\infty(\alpha\beta)^{k}
\Big[\log|\Theta(x-{\bf 2k})-y|\\
&\quad-\beta\log|\Theta(x+{\bf 2k+1})-\overline{y}|\Big]\quad
\text{in}\,\, \big\{x_1\in(-\.5,\.5)\big\}, \\
&G(\Theta(x),y)=\frac{2(1+\alpha)}{1+b_0}
\sum_{k=0}^\infty(\alpha\beta)^{k}\log|\Theta(x-{\bf 2k})-y|\quad \text{in}\,\, \big\{x_1<-\.5\big\}.
\end{align*}
By taking \eqref{eq 2.101} into account, $G (\Theta(x),y)$ has the expression: in $\big\{x_1>\.5\big\}$
\begin{align*}
&G (\Theta(x),y)=\log|x-\Theta(y)|+\alpha\log|x-{\bf 1}+\overline{\Theta(y)}|\\
&\quad-\frac{2\beta(1+\alpha)}{1+a_0}\sum_{k=0}^\infty(\alpha\beta)^k\log|x+{\bf 2k+1}+\overline{\Theta(y)}|-\log|x|-\alpha\log|x-{\bf 1}|\\
&\quad+\frac{2\beta(1+\alpha)}{1+a_0}\sum_{k=0}^\infty(\alpha\beta)^k\log|x+{\bf 2k+1}|+\frac{(1-\beta)(1+\alpha)}{1-\alpha\beta}\log|y|;
\end{align*}
in $\big\{x_1\in (-\.5,\.5)\big\}$
\begin{align*}
&G (\Theta(x),y)=(1+\alpha)\sum_{k=0}^\infty(\alpha\beta)^k\Big[\log|x-{\bf 2k}-\Theta(y)|-\beta
\log|x+{\bf 2k+1}+\overline{\Theta(y)}|\Big]\\
&\quad-(1+\alpha)\sum_{k=0}^\infty(\alpha\beta)^k\Big[\log|x-{\bf 2k}|-\beta\log|x+{\bf 2k+1}|\Big]+\frac{(1+\alpha)(1-\beta)}{1-\alpha\beta}\log|y|;
\end{align*}
and in $\big\{x_1<-\.5\big\}$
\begin{align*}
&G (\Theta(x),y)=\frac{2(1+\alpha)}{1+b_0}\sum_{k=0}^{\infty}
(\alpha\beta)^k\log|x-{\bf 2k}-\Theta(y)|\\
&\quad-\frac{2(1+\alpha)}{1+b_0}\sum_{k=0}^{\infty}(\alpha\beta)^k\log|x-{\bf 2k}|+\frac{(1-\beta)(1+\alpha)}{1-\alpha\beta}\log|y|.
\end{align*}
Observe that $$G (\Theta(x),y)=\tG(x,\Theta(y))-H(x)+\frac{(1+\alpha)(1-\beta)}{1-\alpha\beta}\log|y|,$$ where $\tG(x,y)$ is Green's function of $\tilde{L}$ and $H(x)$ is a function obtained by replacing $y$ with $0$ in the expression of $\tG(x,y)$. Since we verify the continuities of $\tG(x,y)$ and $A(x)D_1\tG(x,y)$ across the lines $\{x_1=\pm \.5\}$ in Proposition \ref{prop 2.22}, the same proof shows that $H(x)$ and $A(x)D_1H(x)$ are continuous across the lines $\{x_1=\pm \.5\}$. Therefore, $G (\Theta(x),y)$ and $A(x)D_1G (\Theta(x),y)$ are continuous by linearity, and $G (x,y)$ is the desired Green's function.
\end{proof}
With the help of Green's function constructed above, we are ready to consider the non-homogeneous equation
\begin{equation*}
D_i(a(x)D_iu(x))=D_if_i(x)
\end{equation*}
in general $\mathcal{D}\subset \bR^2$.
Now, we state our theorem in the case when $r_1=r_2=1$.
\begin{theorem}\label{thm 2.191}
Let $n\in \mathbb{N}\cup \{0\}$ and $\gamma\in(0,1)$. Assume that $u$ is a weak solution to the  equation
\begin{equation*}
D_i(a(x)D_iu(x))=D_if_i\quad \text{in}\,\, \mathcal{D},
\end{equation*}
where
\begin{align*}
&a(x)=a_0\quad \text{in}\,\, \fB_1,\quad
a(x)=b_0\quad \text{in}\,\, \fB_2,\quad
a(x)=1\quad \text{in}\,\, \fB_0,
\end{align*}
and for each $i$, $f_i$ is piecewise $C^{n,\gamma}$, i.e.,
\begin{align*}
f_i\in C^{n,\gamma}(\fB_1),\quad f_i\in C^{n,\gamma}(\fB_2),\quad
f_i\in C^{n,\gamma}(\fB_0).
\end{align*}
 Let $\mathcal{D}_\varepsilon=\{x\in\mathcal{D},\text{dist}(x,\partial \mathcal{D})\ge \varepsilon\}$ for  $\varepsilon>0$.
Then $u$ is piecewise $C^{n+1,\gamma}$ in $\mathcal{D}_{\varepsilon}$ up to the boundary, i.e.,
$$
u\in C^{n+1,\gamma}(\mathcal{D}_\varepsilon\cap {\fB_1}),\,\, u\in C^{n+1,\gamma}(\mathcal{D}_\varepsilon\cap{\fB_2}),\quad u\in C^{n+1,\gamma}(\mathcal{D}_\varepsilon \cap\fB_0).
$$
\end{theorem}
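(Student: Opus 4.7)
The plan has two parts, corresponding to the two types of points in $\mathcal{D}_\varepsilon$ where regularity of $u$ is not immediate: the smooth parts of the interfaces $\partial \fB_1 \cup \partial \fB_2$, and the touching point at the origin.

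Away from the origin the strategy is classical. On a neighborhood of a smooth point of $\partial \fB_j$ inside $\mathcal{D}_\varepsilon$, flatten $\partial \fB_j$ by a $C^\infty$ diffeomorphism; the equation becomes a standard two-phase transmission problem $D_i(\hat a D_i u) = D_i \hat f_i$ with $\hat a$ piecewise constant across $\{x_2 = 0\}$ and $\hat f_i$ piecewise $C^{n,\gamma}$, for which piecewise $C^{n+1,\gamma}$ regularity up to $\{x_2 = 0\}$ is the standard Schauder estimate for the divergence-form Poisson equation (after absorbing the jump in $\hat a$ and applying Lemma \ref{lemma 3.231} to extend $\hat f_i$ across the interface). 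This handles $u$ on $\mathcal{D}_\varepsilon \setminus B_\rho(0)$ for any fixed $\rho > 0$.

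Near the origin one cannot simply flatten, since $\partial \fB_1$ and $\partial \fB_2$ are tangent there. The plan is to use the Green's function $G(x,y)$ of $L$ just constructed on $\bR^2$. Fix a cutoff $\eta \in C_c^\infty(\mathcal{D})$ equal to $1$ on $\mathcal{D}_{\varepsilon/2}$ and supported in $\mathcal{D}_{\varepsilon/4}$, and set $v = \eta u$. A direct computation yields $Lv = D_i \tilde f_i + \tilde h$ on $\bR^2$ with $\tilde f_i, \tilde h$ compactly supported, coinciding with $f_i, 0$ on $\mathcal{D}_{\varepsilon/2}$; a short induction on $n$, initialized by the piecewise $C^{1,\beta}$ estimate of \cite{LV00}, ensures that $\tilde f_i$ inherits the piecewise $C^{n,\gamma}$ regularity needed. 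Green's function then gives
\[ v(x) = \int_{\bR^2} G(x,y)\tilde h(y)\,dy - \int_{\bR^2} D_{y_i} G(x,y)\tilde f_i(y)\,dy, \]
and for $x \in B_\rho(0)$ with $\rho$ small the first integral and the parts of the second coming from cutoff corrections are smooth in $x$ because their $y$-supports are separated from $B_\rho(0)$ and $G(\cdot,y)$ is real-analytic off the diagonal. The principal integral $\int D_{y_i} G(x,y)\, \eta(y) f_i(y)\,dy$ is split as $\sum_{j' \in \{0,1,2\}} \int_{\fB_{j'}}$, and the series expression for $G$ reduces each piece, modulo a smooth tail, to finitely many Newtonian potentials of the form $\int \log|x-y|\, \eta f_i \chi_{\fB_{j'}}(y)\,dy$, possibly pre-composed with a smooth conformal map and, in the $\overline{y}$-terms, a reflection across the $x_1$-axis. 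Each such potential satisfies $\Delta w = D_i(\eta f_i \chi_{\fB_{j'}})$ on $\bR^2$, and its piecewise $C^{n+1,\gamma}$ regularity follows from the flattening argument of the preceding paragraph (together with Lemma \ref{lemma 3.231} to extend the data across $\partial \fB_1 \cup \partial \fB_2$ when $j'=0$).

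The main technical obstacle is the tail-series analysis of $G$. After $n+1$ derivatives in $x$, the individual terms $(\alpha\beta)^{|k|} D_x^{n+1}\log|X_k(x) - y^{\#}|$ can blow up pointwise near $y^{\#} = 0$, because $X_k(x) \to 0$ as $|k| \to \infty$ and $\tilde f_i$ may be supported right up to the touching point; this blow-up can outpace the geometric factor $|\alpha\beta|^{|k|}$, so term-by-term differentiation under the integral does not work naively. One must instead show that the integrated tail $\sum_k (\alpha\beta)^{|k|}\int \log|X_k(x) - y^{\#}|\,\eta f_i(y)\,dy$ defines a bounded operator from the piecewise $C^{n,\gamma}$ class to $C^{n+1,\gamma}(B_\rho(0))$. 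The cleanest route is to pull back via $\Theta$ to the strip geometry of $\tilde G$, where the tail becomes a geometric sum of Newtonian potentials over translated strips and the standard Calderón--Zygmund/Schauder smoothing estimate can be applied uniformly in $k$, so that the factor $|\alpha\beta|^{|k|}$ secures absolute convergence in the correct piecewise Hölder norm.
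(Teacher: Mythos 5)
Your overall architecture --- interface regularity away from the origin, a cutoff near the origin, a Green's function representation, reduction to Newtonian potentials of piecewise H\"older densities, and Lemma \ref{lemma 3.231} for the $\fB_0$ piece --- is the same as the paper's. But the step you flag as the ``main technical obstacle'' is where you depart, and your replacement has a genuine gap, while the difficulty you are trying to avoid is not actually there. The paper does sum the series term by term: for $x\in B_1\cap\fB_0$ each tail term is not estimated by differentiating $\log|X_k(x)-y|$ under the integral; it is recognized as $h(X_{-2k}(x))$ (resp.\ $\hat h(X_{2k+1}(x))$), where $h$ is the fixed Newtonian potential of one piece of the data, already shown to be piecewise $C^{n+1,\gamma}$ on $B_1$ with a norm bound, and where $X_{-2k}$ maps $B_1\cap\fB_0$ into $B_1\setminus\overline{\fB_1}$ with $|D^jX_k(x)|\le C|k|^{j+1}$ there (this uses $|kz+\ri|\ge 1$ on $\fB_0$). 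The chain rule then gives $\|h(X_{-2k})\|_{n+1,\gamma}\le C(k+1)^{n+3}$ times the piecewise norm of $h$, and this polynomial growth is absorbed by $(\alpha\beta)^k$. So the pointwise blow-up of individual kernel derivatives near the touching point never enters; the composition estimate is the observation missing from your write-up.

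Your proposed substitute --- pull back by $\Theta$ to the strip and apply Schauder uniformly in $k$ there --- is not a working alternative as sketched. The pullback sends a neighborhood of the origin to a neighborhood of infinity in the strip, and uniform, unweighted piecewise $C^{n+1,\gamma}$ bounds for the pulled-back solution $w$ near strip infinity do not return bounded derivatives of $u=w\circ\Theta$ near the origin, because $D\Theta(x)\sim|x|^{-2}$ blows up at $0$; one would need weighted estimates encoding decay of $Dw$ at infinity, which you do not address (there are also convergence and normalization issues for logarithmic potentials of densities supported on the whole unbounded strip). Two smaller points: your identity $v(x)=\int G\,\tilde h-\int D_{y_i}G\,\tilde f_i$ is asserted rather than proved; the paper instead defines $\tilde u$ by that formula and controls the difference $v-\tilde u$, which is $L$-harmonic in $\bR^2$, by Theorem \ref{thm 3.151}, thereby avoiding any Liouville-type argument. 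Also, $G(\cdot,y)$ is only piecewise smooth off the diagonal (not real-analytic across $\partial\fB_1\cup\partial\fB_2$), although this does not affect the conclusion you want for the terms whose $y$-support is away from the origin.
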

\begin{proof}
We prove the theorem by considering two cases.

{\bf Case 1:} $(0,0)\notin\mathcal{D}$.  Define $\Omega_1=\mathcal{D}\cap \fB_1, \Omega_2=\mathcal{D}\cap \fB_2$, and $\Omega_0=\mathcal{D}\cap\fB_0$. Because $(0,0)\notin \mathcal{D}$, it is easy to see that any point in $\mathcal{D}$ belongs to at most two subdomains $\overline{\Omega_i}$, which is exactly the case in  \cite[Remark 3(ii)]{DongARMA12}.  Therefore, we apply \cite[Theorem 2 and Remark 3(ii)]{DongARMA12} to obtain that when $n=0$, $u\in C^{1,\gamma}$ piecewise in $\mathcal{D}_{\varepsilon}$, i.e., for any $0\le i\le 2$,
$$u\in C^{1,\gamma}(\mathcal{D}_{\varepsilon}\cap \Omega_i).$$

For $n>0$, we  use an induction argument. For a ball  away from the circles $\{|x-(0,\pm 1)|=1\}$, the conclusion follows from the classical Schauder estimate for Poisson's equation. We only need to consider a ball $B_l(x)\subset\mathcal{D}_\varepsilon$ and $x\in \{|x-(0,\pm 1)|=1\}$.  Notice that  by locally flattening the boundary, it is sufficient to consider
\begin{equation*}
D_i(a_{ij}D_j u)=D_ih_i,
\end{equation*}
where  $a_{ij}$ are piecewise smooth in $\bR^2_+$ and $\bR^2_-$ with bounded derivatives and $h_i\in C^{n,\gamma}(\bR^2_+)$, $h_i\in C^{n,\gamma}(\bR^2_-)$, where $\bR^2_+$ (or $\bR^2_-$) is the set of points on the plane such that $x_2>0$ (or $x_2<0$).
Here we only give a sketch of the proof. For $n=1$, by taking derivative with respect to the tangential variable $x_1$, we have
\begin{equation*}
D_i(a_{ij}D_jD_1u)=D_iD_1h_i-D_i(D_1(a_{ij})D_ju).
\end{equation*}
Thanks to the case $n=0$, we have that $Du\in C^{\gamma}$ piecewise, which implies the right-hand side can be written as $D_i\mathfrak{f}_i$ for some piecewise $C^\gamma$ functions $\mathfrak{f}_i$. Therefore, we apply the result of the case $k=0$ to obtain that $DD_1u\in C^{\gamma}$ piecewise.
It remains to estimate $D^2_2 u$, which is obtained from the  formula
\begin{equation*}
 D_2^2u=a_{22}^{-1}(D_ih_i-D_1(a_{1j}D_ju)-D_2(a_{21}D_1u)).
\end{equation*}
Therefore,  $DD_2 u$ is piecewise $C^{1,\gamma}$ as well. By induction, it is easy to prove that $u\in C^{n+1,\gamma}$ piecewise.

{\bf Case 2:} $(0,0)\in \mathcal{D}$. There exists $0<l<1$ such that $B_l\subset \mathcal{D}$. We define a cutoff function $\eta\in C_0^\infty(B_l)$, which equals 1 on $B_{l/2}$.
Let $v=u\eta$, which satisfies
\begin{equation}
D_i(aD_iv(x))=D_i(f_i\eta+auD_i\eta)-f_iD_i\eta+aD_i uD_i\eta\label{eq 1.212}
\end{equation}
in $\bR^2$.  We define
 \begin{equation}
\tilde{u}(x)=-\int_{\bR^2}D_{y_i}G (x,y)\tilde{f}_i(y)
+G (x,y)(f_i(y)D_i\eta(y)-aD_iuD_i\eta)\,dy,\label{eq 122.2}
 \end{equation}
 where
 \begin{equation}\label{eq 122.1}
 \tilde{f}_i=f_i\eta+auD_i\eta.
 \end{equation}
Since $G $ is Green's function of $L$, the function $\tilde{u}$ defined above is a solution to  \eqref{eq 1.212}.
When $u$ is restricted to $\{|x|>\frac{l}{2}\}$,  the result follows from Case 1. Therefore,  it remains to estimate $u|_{B_{l/2}}$.
Since $v=u$ in $B_{l/2}$,  it suffices to consider $v$ instead of $u$.
Because $$D_i(a(x)D_i(\tilde{u}-v))=0$$
in $\bR^2$, by Theorem \ref{thm 3.151} with a sufficiently large $R_0$, we know that $\tilde{u}-v$ is piecewise smooth. Hence, it suffices to estimate $\tilde{u}$ instead of $v$.

Note that $\text{supp}(D_i\eta)\subset \{l/2<|x|<l\}$, on which by  Case 1 $u\in C^{n+1,\gamma}$ piecewise.
Combining with the definition of $\tilde{f}_i$ in \eqref{eq 122.1}, we get that $\tilde{f}_i$ are piecewise  $C^{n,\gamma}$.  By \eqref{eq 122.2},  % Since $\tilde{G}(x,y)=\log|x-y|+\tilde{H}(x,y)$,
\begin{align*}
&\tilde{u}(x)=-\int_{\bR^2}D_{y_i}G (x,y)\tilde{f}_i(y)\,dy
-\int_{\bR^2}G (x,y)\big(f_i(y)D_i\eta(y)-aD_i uD_i\eta\big)\,dy\\
&:=u_1(x)+u_2(x).
\end{align*}
Since the estimates of $u_1$ and $u_2$  are quite similar, we only consider $u_1$ as follows:
\begin{align*}
&u_1(x)=-\int_{\bR^2}D_{y_i}G (x,y)\tilde{f}_i(y)\,dy\\
&=-\int_{\fB_1}D_{y_i}G (x,y)\tilde{f}_i(y)\,dy-\int_{\fB_2}D_{y_i}G (x,y)\tilde{f}_i(y)\,dy-\int_{\fB_0}D_{y_i}G (x,y)\tilde{f}_i(y)\,dy\\
&:=-w_1(x)-w_2(x)-w_3(x).
\end{align*}
We focus on the case $x\in \Omega_0\cap B_1$ and the same argument can be applied to the other cases as well.
By the definition of $G (x,y)$, we have
\begin{align}\nonumber
&w_1(x)=(1+\alpha)\sum_{k=0}^\infty(\alpha\beta)^{k}
\Big(\int_{\fB_1}D_{y_i}\log|X_{-2k}(x)-{y}|\tilde{f}_i(y)\,dy\\\nonumber
&\quad\quad\quad -\beta\int_{\fB_1}D_{y_i}\log|X_{2k+1}(x)-\overline{y}|\tilde{f}_i(y)\,dy\Big)\\
&=(1+\alpha)\sum_{k=0}^\infty(\alpha\beta)^{k}
\big[h(X_{-2k}(x))-\beta
\hat{h}(X_{2k+1}(x))\big],\label{eq 2.181}
\end{align}
where
\begin{equation}
h(x)=\int_{\fB_1}D_{y_i}\log|x-y|\tilde{f}_i(y)\,dy,\quad
\hat{h}(x)=\int_{\fB_1}D_{y_i}\log|x-\overline{y}|\tilde{f}_i(y)\,dy.\label{eq 3.161}
\end{equation}

Since $\log|x-y|$ is the fundamental solution of the Laplace equation in $\bR^2$, $h(x)$ satisfies
\begin{equation*}%\label{eq 3.153}
\Delta h=-D_i(\tilde f_i\chi_{\fB_1}) \quad \text{in}\,\, \bR^2.
\end{equation*}
%where $\chi_{\fB_1}$ is the indicator function of $\fB_1$. 
Since  $\tilde f$ is piecewise $C^{n,\gamma}$ and the interface  $\partial\fB_1$ is smooth by the same method in dealing with Case 1, we  obtain that $h\in C^{n+1,\gamma}(B_1)$ piecewise.

For any $k\neq 0$, by the definition,
\begin{equation*}
X_{k}(x_1,x_2)=\left(\Re \frac \ri{\ri/z+k},\Im \frac \ri{\ri/z+k}\right)
=\frac 1 {k}\left(\Re \frac 1{kz+\ri},1+\Im \frac 1{kz+\ri}\right),
%\left(\frac{x_1}{(2kx_1)^2+(2kx_2-1)^2}, \frac{-2k(x_1^2+x_2^2-x_2)}{(2kx_1)^2+(2kx_2-1)^2}\right).
\end{equation*}
where $z=x_1+\ri x_2$.
Notice that for $x\in \fB_0$,
$|kz+\ri|\ge 1$. %(2kx_1)^2+(2kx_2-1)^2\ge 1.$
By a straightforward  calculation, it is  easily seen that for any $j>0$ and $x\in B_1\cap\fB_0$,
\begin{equation}
|D^j X_{k}(x)|\le C|k|^{j+1},\label{eq 2.221}
\end{equation}
where $C$ is independent of $k$.  Moreover, since $x\in \Omega_0\cap B_1$, $\Theta(x)\in \{|x|> 1\}\cap \{x_1\in (-\.5,\.5)\}$, which implies that $\Theta(x)-{\bf 2k}\in \{x\,:\,x_1<1/2,|x|>1\}$, and
\begin{equation}\label{eq 2.222}
X_{-{\bf 2k}}(x)=\Theta(\Theta(x)-{\bf 2k}))\in B_1\setminus \overline{\fB_1}.
\end{equation}
Therefore, combining \eqref{eq 2.221} and \eqref{eq 2.222}, with the chain rule, we have for any $k\ge 0$
\begin{equation*}
\|h(X_{-2k}(x))\|_{n+1,\gamma;B_1\cap\Omega_0}
\le C(k+1)^{n+3}\Big(\|h\|_{n+1,\gamma; B_1\cap \fB_2}+\|h\|_{n+1,\gamma;B_1\cap\fB_0}\Big).
\end{equation*}

It remains to estimate $\hat{h}(X_{2k-1})$. Notice that
\begin{equation*}
\Big|\int_{\fB_1}D_{y_i}\log|x-\overline{y}|\tilde{f}_i(y)\,dy\Big|
=\Big|\int_{\fB_2}D_{y_i}\log|x-y|\tilde{f}_i(\overline{y})\,dy\Big|,
\end{equation*}
implying that
$$\Delta\hat{h}(x)=D_i\big(\tilde{f}_i(\overline{x})\chi_{\fB_2}\big).$$
Note that for any $k\ge 1$ and $x\in B_1\cap\fB_0$,
\begin{equation*}
X_{2k-1}(x)=\Theta(\Theta(x)+{\bf 2k-1})\in B_1\cap \fB_1.%\label{eq 3.282}
\end{equation*}
Furthermore, the regularity of $\tilde{f_i}(\overline{y})$ is the same as $\tilde{f_i}(y)$, which indicates that $\hat{h}(X_{2k-1})$ can be estimated in a similar way as $h(X_{-2k})$.
Hence, combining the estimate of $h$ and $\hat{h}$, we have
\begin{align}\nonumber
&\|w_1\|_{n+1,\gamma;B_1\cap\fB_0}\nonumber\\
&\le C(1+\alpha)\sum_{k=0}^\infty(\alpha\beta)^k\Big(\|h(X_{-2k})\|_{n+1,\gamma;B_1
\cap\fB_0}+\beta\|\hat{h}(X_{2k+1})\|_{n+1,\gamma;B_1
\cap\fB_0}\Big)\nonumber\\
&\le C\sum_{k=0}^\infty(\alpha\beta)^{k}(k+1)^{n+3}\Big(\|h\|_{n+1,\gamma;B_1\cap \fB_2}+\|h\|_{n+1,\gamma;B_1\cap\fB_0}+\|\hat{h}\|_{n+1,\gamma; B_1\cap \fB_1}\Big)\nonumber\\
\nonumber
%&\quad+C\sum_{k=1}^\infty(\alpha\beta)^{k-1}(k+1)^{n+3}\\\nonumber
&\le C\sum_{j=1}^2\Big(\|f_j\eta\|_{n,\gamma;\fB_1}+\|uD_j\eta\|_{n,\gamma;\fB_1}\Big)
+C\Big(\|h\|_{L_\infty(B_2)}+\|\hat{h}\|_{L_\infty(B_2)}\Big)\\\label{eq 4.51}
&\le C\Big(\sum_{j=1}^2\|f_j\|_{n,\gamma;\fB_1}+\|u\|_{n,\gamma;\fB_1\cup(B_l\setminus B_{l/2})}\Big),
\end{align}
where the last term on the right-hand side is estimated in Case 1.
Here in the last inequality above, we use the fact that
\begin{align*}
&\|h\|_{L_\infty(B_2)}+\|\hat{h}\|_{L_\infty(B_2)}\\
&\le
 C\sum_{j=1}^2\|\tilde{f}_j\|_{L_\infty(\fB_1)}\le C\Big(\sum_{j=1}^2\|f_j\|_{L_\infty(\fB_1)}+\|u\|_{L_\infty(B_l\setminus B_{l/2})}\Big),
\end{align*}
which can be deduced directly from \eqref{eq 3.161}.

By symmetry, it is easy to see that the argument applied to $w_1$ can be implemented to $w_2$ as well. We omit the details. Now let us estimate $w_3$ with a little modification.  Similar to the expression in \eqref{eq 2.181}, we have
\begin{align*}
w_3(x)&=g(x)+\sum_{k=1}^\infty(\alpha\beta)^{k}
\Big(g(X_{2k}(x))+g(X_{-2k}(x))\\
&\quad +\beta\hat{g}(X_{2k+1}(x))
+\alpha\hat{g}(X_{-(2k+1)}(x))\Big),
\end{align*}
where
\begin{equation*}%\label{eq 3.172}
g(x)=\int_{\fB_0}D_{y_i}\log|x-y|\tilde{f}_i(y)\, dy,\quad\hat{g}(x)=-\int_{\fB_0}D_{y_i}\log|x-y|\tilde{f}_i(\overline{y})\,dy.%\label{eq 3.284}
\end{equation*}
Since $\tilde{f}_i(\overline{y})$ has the same regularity as $\tilde{f}_i$, it is sufficient to consider $g$, which satisfies
$$
\Delta g=-D_i(\tilde f_i\chi_{\fB_0}).
$$
Here, we cannot directly apply the result in Case 1 because of the singularity of the domain $\fB_0$.
Nonetheless, by Lemma \ref{lemma 3.231}, there exists $F_i\in C^{n,\gamma}(\bR^2)$ which is the extension of $\tilde f_i \chi_{\fB_0}$ and satisfies
$$
\|F_i\|_{n,\gamma;\bR^2}\le C\big(\|f_i\|_{n,\gamma; B_l\cap\fB_0}+\|u\|_{n,\gamma;(B_l\setminus B_{l/2})\cap\fB_0}\big).
$$
 %{since $D_i\eta$ vanishes around the origin and $au$ is  $C^\gamma$ in $\bR^2\setminus(B_1\cup B_2)$ away from the origin, it is easy to extend $auD_i\eta$ to be $C^\gamma$ in $\bR^2$. This is because for the region away from the origin we may simply flatten the boundary and apply the standard technique in extension on a half space.  We denote the extension of $auD_i\eta$ to be $U_i\in C^{1,\gamma}(\bR^2)$.}
Therefore, define
\begin{equation*}
\tilde{g}:=\int_{\bR^2}D_{y_i}\log|x-y|F_i(y)\,dy,\quad \mathfrak{g}_1:=\int_{\fB_1}D_{y_i}\log|x-y|F_i(y)\,dy,
\end{equation*}
and
\begin{equation*}
\mathfrak{g}_2:=\int_{\fB_2}D_{y_i}\log|x-y|F_i(y)\,dy,
\end{equation*}
which satisfy
\begin{equation*}
\Delta\tilde{g}=-D_iF_i,\quad \Delta\mathfrak{g}_1=-D_i(F_i\chi_{\fB_1}),\quad
\text{and}\quad \Delta\mathfrak{g}_2=-D_i(F_i\chi_{\fB_2}).
\end{equation*}
 From the classical Schauder estimate, we have $\tilde{g}\in C^{n+1,\gamma}(B_1)$.
By the estimate of $w_1$ above, we have $\mathfrak{g}_1$, $\mathfrak{g}_2$ are all piecewise $C^{n+1,\gamma}$, which implies $g=\tilde{g}-\mathfrak{g}_1-\mathfrak{g}_2$ is piecewise $C^{n+1,\gamma}$ as well. Then we can follow the same argument in the estimate of $w_1$ (cf. \eqref{eq 4.51}) and obtain a similar estimate for $w_3$
\begin{equation*}
\|w_3\|_{n+1,\gamma;B_1\cap \fB_0}\le C\big(\sum_{j=1}^2\|f_j\|_{n,\gamma;B_l\cap\fB_0}+\|u\|_{n,\gamma;(B_l\setminus B_{l/2})\cap\fB_0}\big).
\end{equation*}
Hence, we show that $\tilde{u}$ is piecewise $C^{n+1,\gamma}$ and the proof is completed.
 \end{proof}
%%%%%%%%%%%%%%%%%%%%%%%%%%
%%%%%%%%%%%%%%%%%%%%%%%%%%%

\subsection{Two balls with different radii}
                            \label{sec4.3}
Next, we consider the general case that the two balls have different radii. Specifically,
\begin{equation*}
a(x)=a_0\chi_{B_{r_1}(0,r_1)}+b_0\chi_{B_{r_2}(0,-r_2)}
+\chi_{\mathcal{D}\setminus  (B_{r_1}(0,r_1)\cup B_{r_2}(0,-r_2))}.
\end{equation*}
Denote $\partial B_{r_1}(0,r_1)=C_1$ and $\partial B_{r_2}(0,-r_2)=C_2$.
By scaling and reflection, without loss of generality, we may assume that $r_2>r_1>1/2$.
Now, we consider a conformal map $\tilde{T}:\bC\rightarrow \bC, \tilde{T}(z)= \frac{1}{z-z_0}$, where $z_0\in \bC$  and $z_0=z_1+z_2\ri$.
It is well known that for $z_0\notin C_1\cup C_2$, $\tilde{T}$ maps $C_1$ and $C_2$ to two circles. We shall find a suitable $z_0$ such that the two circles have the same radius. Indeed, by a simple computation it is easy to see that $\tilde{T}(C_1)$ and $\tilde{T}(C_2)$ are circles with radii $|r_1/(z_1^2+z_2^2-2r_1z_2)|$ and $|r_2/(z_1^2+z_2^2+2r_2z_2)|$, respectively. %{\color{red}center
%$$
%\left(-\frac{z_1}{z_1^2+z_2^2-2r_1z_2},\frac{z_2-r_1}{z_1^2+z_2^2-2r_1z_2}\right),
%$$
%and}
%{\color{red}Meanwhile, $\tilde{T}(C_2)$ is another circle with center
%$$
%\left(-\frac{z_1}{z_1^2+z_2^2+2r_2z_2},\frac{z_2+r_2}{z_1^2+z_2^2+2r_2z_2}\right),
%$$
%and radius}
Then, we only need to find $(z_1,z_2)$ such that
$$
\big|r_1/(z_1^2+z_2^2-2r_1z_2)|=|r_2/(z_1^2+z_2^2+2r_2z_2)\big|.
$$
%{\color{blue}which is equivalent to}
%\begin{align*}
%&[r_1(z_1^2+(z_2{\color{blue}+}r_2)^2-r_2^2)-r_2(z_1^2+(z_2-r_1)^2-r_1^2)]\cdot\\
%&\quad [r_1(z_1^2+(z_2{\color{blue}+}r_2)^2+r_2^2)+r_2(z_1^2+(z_2-r_1)^2-r_1^2)]=0.
%\end{align*}
It is obvious that
\begin{align}\nonumber
r_1(z_1^2+z_2^2+2r_2z_2)=r_2(z_1^2+z_2^2-2r_1z_2)\\
 \Leftrightarrow (r_2-r_1)(z_1^2+z_2^2)=4z_1z_2r_1r_2\label{eq 1.211}.
\end{align}
Note that \eqref{eq 1.211} can be written as
\begin{equation}
\frac{z_1}{z_2}+\frac{z_2}{z_1}=\frac{4r_1r_2}{r_2-r_1}>2,\label{eq 3.241}
\end{equation}
which implies the existence of $(z_1,z_2)$. Moreover, if $(z_1,z_2)$ is a solution of \eqref{eq 3.241}, for any $s>0$ $(sz_1,sz_2)$ solves \eqref{eq 3.241} as well. Hence, we can pick $z_0$ outside $\mathcal{D}$, which indicates that $\tilde{T}$ is smooth in $\mathcal{D}$. After a translation, rotation, and scaling of the coordinates, we can assume that $\tilde{T}$ maps $B_{r_1}(0,r_1)$ and $B_{r_2}(0,-r_2)$ to $\fB_1$ and $\fB_2$, respectively.
Therefore, we obtain the desired conformal map $\tilde{T}$, which is a diffeomorphism between $\mathcal{D}$ and $\tilde{T}(\mathcal{D})$.

Now we are ready to prove Theorem \ref{thm 2.161}.
\begin{proof}[Proof of Theorem \ref{thm 2.161}]
In order to consider the equation
\begin{equation*}
D_i(a(x)D_iu(x))=D_if_{i}(x),
\end{equation*}
we define $v(x)=u(\tilde{T}^{-1}x)$ in $\tilde{T}(\mathcal{D})$.  By a straightforward calculation, we have
\begin{equation*}
D_i(a(\tilde{T}^{-1}(x))D_iv)=D_k(\eta_{ki}f_i(\tilde{T}^{-1}x))
\end{equation*}
in $\tilde{T}(\mathcal{D})$, where $\eta_{ki}$ are some smooth functions in $\tilde{T}(\mathcal{D})$.
%it is sufficient to work on
%\begin{equation*}
%D_i(a(\tilde{T}^{-1}x)D_iu(\tilde{T}^{-1}x))=D_if_i(\tilde{T}^{-1}x).
%\end{equation*}
The operator on the left-hand side of the equation above is the same as the operator in Theorem \ref{thm 2.191}. Therefore, applying Theorem \ref{thm 2.191}, we get $v\in C^{n+1,\gamma}$ piecewise in $\tilde{T}(\mathcal{D})\cap B_{l}$, where $B_l$ is a small ball around the origin. This, combined with the smoothness of $\tilde{T}$ and the chain rule,  implies that $u\in C^{n+1,\gamma}$ piecewise in a small ball $B_{\hat{l}}\subset \tilde{T}^{-1}(B_l)$ around the origin. For the region outside $B_{\hat{l}}$, it follows from Case 1 in the proof of Theorem \ref{thm 2.191}. Hence,  the theorem is proved.
\end{proof}

By a standard freezing coefficients argument, finally we prove Corollary \ref{cor 3.71}.
\begin{proof}[Proof of Corollary \ref{cor 3.71}]
First, we consider a ball $B_{l}(x_0)$ such that $B_l(x_0)\subset \mathcal{D}_\varepsilon\cap B_{r_1}(0,r_1)$. By the classical Schauder estimate, we have
\begin{equation}\label{eq 3.72}
\|u\|_{n+1,\gamma;B_{l/2}(x_0)}\le C\Big(\|u\|_{L_\infty(B_l(x_0))}+\sum_{j=1}^2\|f_j\|_{n,\gamma;B_l(x_0)}\Big).
\end{equation}
Similarly, we can show \eqref{eq 3.72} holds if $B_l(x_0)\subset \mathcal{D}_\varepsilon\cap B_{r_2}(0,-r_2)$ or $B_l\subset\mathcal{D}_\varepsilon\setminus (B_{r_1}(0,r_1)\cup B_{r_2}(0,-r_2))$.

Second, we consider a ball $B_l(x_0)$ such that $x_0\in \partial (B_{r_1}(0,r_1)\cup B_{r_2}(0,-r_2))$ but $0\notin B_{l}(x_0)$. By flattening the boundary, this is essentially the same as Case 1 in the proof of Theorem \ref{thm 2.191}.  This implies that $u$ is piecewise $C^{n+1,\gamma}$ in this case.

It remains to consider $B_l(x_0)=B_l(0,0)\subset \mathcal{D}_\varepsilon$. In this case, we define $\tilde{a}$ as follows:
\begin{align*}
&\tilde{a}(x)=\lim_{\substack {x\rightarrow (0,0) \\x\in B_{r_1}(0,r_1)}}a(x),\quad x\in B_{r_1}(0,r_1),\\
&\tilde{a}(x)=\lim_{\substack {x\rightarrow (0,0) \\x\in B_{r_2}(0,-r_2)}}a(x),\quad x\in B_{r_2}(0,-r_2),\\
&\tilde{a}(x)=\lim_{\substack {x\rightarrow (0,0) \\
x\in \bR^2\setminus (B_{r_1}(0,r_1)\cup B_{r_2}(0,-r_2))}}a(x),\quad \text{otherwise.}
\end{align*}
Then the equation can be written as
\begin{equation}
D_i(\tilde{a}D_iu)=D_i(f_i+(\tilde{a}-a)D_iu).\label{eq 3.81}
\end{equation}
%Take a smooth function $\eta$ with compact support in $B_{d}(0,0)$ with $\eta\equiv 1$ in $B_{l/2}(0,0)$. Then we consider the equation that $v=u\eta$ satisfies
%\begin{align*}
%D_i(\tilde{a}D_iv)=D_i((f_i+(\tilde{a}-a)D_iu)\eta+\tilde{a}uD_i\eta)\\-(f_i+(\tilde{a}-a)D_iu)D_i\eta+\tilde{a}D_iuD_i\eta.
%\end{align*}
Denote $\hat{\Omega}_1=B_{l}\cap B_{r_1}(0,r_1)$, $\hat{\Omega}_2=B_{l}\cap B_{r_2}(0,-r_2)$, and $\hat{\Omega}_0=B_{l}\setminus(B_{r_1}(0,r_1)\cup B_{r_2}(0,r_2))$.
Let $\Omega_k=\hat{\Omega}_k\cap B_{l/2}$ for $0\le k\le 2$.
By the Leibniz rule and the interpolation inequality, we get
\begin{equation*}
[(a-\tilde{a})Du]_{n,\gamma;\hat{\Omega}_k}\le C(\varepsilon)\|u\|_{L_\infty(\hat{\Omega}_k)}
+(l^\gamma+\varepsilon)[u]_{n+1,\gamma;\hat{\Omega}_k},
\end{equation*}
where $\varepsilon>0$ and $C(\varepsilon)$ depends on $\varepsilon, n$, and $\|a\|_{n,\gamma;\Omega_k}$.

 Applying Theorem \ref{thm 3.133} to  \eqref{eq 3.81},  we obtain
\begin{align}\nonumber
&\sum_{k=0}^2\|u\|_{n+1,\gamma;\Omega_k}\\
&\le C(\varepsilon)\Big(\sum_{k=0}^2\sum_{j=1}^2\|f_j\|_{n,\gamma;\hat{\Omega}_k}+
\sum_{k=0}^2\|u\|_{L_\infty(\hat{\Omega}_k)}\Big)+C_1(l^\gamma+\varepsilon)\sum_{k=0}^2[u]_{n+1,\gamma;\hat{\Omega}_k},\label{eq 3.285}
\end{align}
where $C_1$ is independent of $\varepsilon$.
Since
\begin{equation*}
[u]_{n+1,\gamma;\hat{\Omega}_k}\le C[u]_{n+1,\gamma;\Omega_k}+C[u]_{n+1,\gamma;\hat{\Omega}_k\setminus\Omega_k},
\end{equation*}
where $C$ is independent of $u$ and $l$, by taking $l$ and $\varepsilon$ sufficiently small in \eqref{eq 3.285}, we obtain
\begin{equation*}
\sum_{k=0}^2\|u\|_{n+1,\gamma;\Omega_k}\le C\Big(\sum_{k=0}^2\sum_{j=1}^2\|f_j\|_{n,\gamma;\hat{\Omega}_k}
+\sum_{k=0}^2\|u\|_{L_\infty(\hat{\Omega}_k)}\Big)+C\sum_{k=0}^2[u]_{n+1,\gamma;\hat{\Omega}_k\setminus\Omega_k},
\end{equation*}
where $[u]_{n+1,\gamma;\hat{\Omega}_k\setminus\Omega_k}$ is  estimated in the previous case.
Therefore, thanks to the argument of partition of the unity,   the corollary is proved.
\end{proof}

\bibliographystyle{plain}
\bibliography{elliptic_pde}

\end{document}